\pgfplotsset{compat=newest}
\newtheorem{theorem}{Theorem}
\newtheorem{proposition}[theorem]{Proposition}%
\newtheorem{lem}[theorem]{Lemma}
\newtheorem{asp}{Assumption}
\newcommand\doublecheck{\textcolor{blue}{\checked\kern-0.6em\checked}}
\DeclareMathOperator*{\argmin}{arg\,min}
\title{Subspace Quasi-Newton Method with \\Gradient Approximation}
\author[1]{Taisei Miyaishi}
\author[1]{Ryota Nozawa}
\author[2]{Pierre-Louis Poirion}
\author[1,2]{Akiko Takeda}
\affil[1]{Graduate School of Information Science and Technology, University of Tokyo}
\affil[2]{Center for Advanced Intelligence Project, RIKEN}
\begin{document}

\maketitle

\begin{abstract}
In recent years, various subspace algorithms have been developed to handle large-scale optimization problems. Although existing subspace Newton methods require fewer iterations to converge in practice, the matrix operations and full gradient computation are bottlenecks when dealing with large-scale problems.
We propose a subspace quasi-Newton method that is restricted to a deterministic-subspace together with a gradient approximation based on random matrix theory. Our method does not require full gradients, let alone Hessian matrices. Yet, it achieves the same order of the worst-case iteration complexities in average for convex and nonconvex cases, compared to existing subspace methods. In numerical experiments, we confirm the superiority of our algorithm in terms of computation time.
\end{abstract}

\section{Introduction}

In recent years, driven by advancements in the field of machine learning, considerable attention has been devoted to the study of unconstrained large-scale optimization problems. The problem of interest is formulated as follows:
\begin{equation}\label{problem}
	\min_{\bm{x}\in\mathbb{R}^n} f(\bm{x}) \quad \text{(assuming $f\in C^2$)}.
\end{equation}
Here, $f$ represents a non-convex function, and $n$ is assumed to be sufficiently large such that even computing the gradient, $\nabla f$, of $f$, let alone the inverse of its Hessian matrix, $\nabla^2f$, is impractical due to memory constraints. To address such optimization challenges, subspace optimization methods have been extensively investigated. The primary concept underlying most subspace methods is to construct a lower-dimensional problem:

\begin{equation}\label{subspaceprob}
	\min_{\bm{d}\in\mathbb{R}^m} g_k(\bm{d}) := f(\bm{x}_k + P_k\bm{d}).
\end{equation}

Here, $\bm{x}_k\in\mathbb{R}^n$ and $P_k \in\mathbb{R}^{n\times m}$ (with $m \ll n$) are fixed, and the iterate is updated to minimize a first-order, second-order, or higher-order approximation of $f$ concerning $\bm{d}$. Consequently, the gradient and Hessian of $f$ with respect to $\bm{d}$ are computed in $\mathbb{R}^m$. Existing research on subspace algorithms can broadly be categorized into (a) methods for constructing $P_k$, and (b) techniques for approximating $f$ in \eqref{subspaceprob} to derive the search direction $\bm{d}$ while ensuring convergence to a stationary point of \eqref{problem} on average or with high probability.

\paragraph{Existing works}
The issue (a) plays an important role in numerical experimental;
When solving the problem \eqref{subspaceprob}, $\bm{x}_k+P_k\bm{d}$ moves only in an $m$-dimensional subspace of $\mathbb{R}^n$. 
The matrix $P_k$ needs to be chosen so that the function value in \eqref{problem} decreases 
significantly. In this sense, the construction of $P_k$ is essential. However, few studies investigate how to construct $P_k$.
Among the previous studies \cite{Lcomdir,Comdir,kimiaei} that investigate the construction of a deterministic $P_k$, two of them use previous gradients (see \cite{Lcomdir,Comdir}). In these methods, $P_k$ is formed by an orthonormal basis of the span of the previous update directions, and a Newton iteration with line search is applied to \eqref{subspaceprob} at each iteration. Under certain conditions on $P_k$, it is shown that the function value converges to the optimal value for convex functions and functions satisfying the Polyak-Lojasiewicz condition and that the gradient norm converges to zero for nonconvex functions.
Many other methods, such as \cite{R-subspace,Sketch,pilanci2014,Roberts2022-us,gower2019rsn,cubicNewton,CJZ,nesterov2017random} that bear the name of ``random subspace'' methods use random matrices $Q\in\mathbb{R}^{n\times d}$, where each component follows a probability distribution with mean $0$ and variance $1$ and the property 
\begin{equation}\label{randmatrix}
	\mathbb{E}[QQ^{\top}/d]=I_n
\end{equation}
is used for their convergence analysis. 

As for (b), there are various possible algorithms for \eqref{subspaceprob}, just as various nonlinear optimization algorithms exist for \eqref{problem}.
Second-order optimization methods based on second-order approximations of $f$, such as the so-called Newton methods, can easily benefit from the subspace method because the size of the Hessian approximation matrix is reduced to $m \times m$. For this reason, various subspace Newton optimization methods (e.g., \cite{gower2019rsn,cubicNewton,Shao,CJZ,R-subspace,Lcomdir,Comdir}) have been proposed.

For example, \cite{gower2019rsn} proposed randomized subspace Newton for convex problems by computing the search direction using a random matrix $Q_k$ at each iteration where the descent direction $\bm{d}_k$ is given by
$\bm{d}_k = -(Q_k^\top \nabla^2 f(\bm{x}_k) Q_k )^{-1} Q_k^\top \nabla f(\bm{x}_k)$. Restricting the method to subspace greatly reduces the computation of the inverse of the Hessian matrix to 
$O(m^3)$. \cite{cubicNewton} proposed a cubically-regularized subspace Newton method for convex problems. 
Recently, \cite{Shao,CJZ} and \cite{R-subspace} independently proposed random subspace Newton-based methods for nonconvex optimization problems.
The subspace methods of \cite{Lcomdir,Comdir}, where $P_k$ is deterministic, also use Newton iterations for nonconvex problems.
These methods need the Hessian computation in the reduced space $\mathbb{R}^m$.
However, since these methods use the Hessian matrix of the original function $f$, numerical experiments have shown that they are fast when the Hessian matrix can be easily computed, such as when the objective function can be separated for each variable, but in other cases, they are computationally heavy and are not practically applicable to large-scale problems. 
See the detailed discussion on existing works in Section~\ref{previous_work}.
Although existing subspace Newton methods require fewer iterations to converge in practice, the matrix operations and full gradient computation are bottlenecks when dealing with large-scale problems.

\paragraph{Motivation}
When considering the aforementioned lines of research, a natural question arises: Is it possible to construct a random subspace Newton-based method using only subspace gradients for solving large-scale problems with the same order of the worst-case iteration complexities in average as before?
 To answer the research question, we propose a subspace quasi-Newton method for \eqref{problem}, where
 the subspace  $P_k$ in \eqref{subspaceprob} is constructed with
 $m$-dimensional subgradient vectors $Q_k^{\top}\nabla f(\bm{x}_k)$ using a random matrix $Q_k$, an approximate Hessian inverse\footnote{It may be confusing, but by following the conventions of quasi-Newton research (see e.g., \cite{NoceWrig06}), we use $B_k$ for the approximate Hessian matrix of $f$ at $\bm{x}_k$ and $H_k$ for its inverse. Note that neither the Hessian matrix computation nor the inverse matrix calculation is required, and only $H_k$ is updated at each iteration of quasi-Newton algorithms.} $H_k\in \mathbb{R}^{m \times m}$ is constructed by following the standard BFGS formula, and the search direction is computed by
 $\bm{d}_k= -H_k (P_k^{\top}\nabla f(\bm{x}_k)) \in \mathbb{R}^{m}$.
 At the end of each iteration, an $n$-dimensional iterate is updated
 by $\bm{x}_{k+1}= \bm{x}_k+\alpha_kP_k\bm{d}_k$ with a chosen step size $\alpha_k$.

 The unique feature of this method is that it requires only subspace gradient vectors, $P_k^{\top}\nabla f(\bm{x}_k)$ and $Q_k^{\top}\nabla f(\bm{x}_k)$, for constructing the approximate
 Hessian inverse $H_k$ and the subspace matrix construction $P_k$, respectively.
 We can use recent automatic differentiation (AD) techniques for the computation of subspace gradient vectors; indeed, $Q_k^{\top}\nabla f(\bm{x}_k)$ can be expressed as $\nabla \tilde{g}_k(\bm{0})$ using the function $\tilde{g}_k(\bm{d}):=f(\bm{x}_k+Q_k\bm{d})$,  restricted in the subspace.
 Furthermore, the subspace gradient calculation 
can be done by using the finite-difference method.
Even in the presence of errors due to the finite-difference method, our algorithm still maintains, 
in expectation, the same order of the worst-case iteration complexities 
given e.g., by \cite{Lcomdir}. 
In fact, when the gradient is readily available, zeroth-order optimization methods are not competitive with first- or second-order methods:  by using reverse-mode
automatic differentiation (AD), one can theoretically evaluate $\nabla f(x)$ in about four-
times the cost of evaluating $f(x)$, regardless of the dimension $n$: \cite{ADbook}.
However, there are many situations when AD is inefficient or not applicable such as PDE-constrained optimization \cite{kozak2021stochastic}.
When it comes to problems where even function-value evaluation is difficult, 
AD will not work efficiently. Furthermore, even when applicable, the reverse mode of AD may lead to memory issues when non-linear processes appear \cite{ADbook}. In such a case, our algorithm is applicable using either the forward mode AD to compute the gradient component-wise on the subspace, or finite-difference method.

Notice that as Table~\ref{table1} suggests, there has been no algorithm other than the subspace gradient method \cite{kozak2021stochastic,kozak2023} that can be constructed using only the subspace gradient.

\paragraph{Contribution}
We can summarize our contribution as follows.

(1)  We introduce a novel quasi-Newton method for subspace optimization. We compute an approximate inverse Hessian matrix of small size using a matrix $P_k$ that defines a subspace, constructed using the approximate gradient vectors.

(2) Our method maintains convergence guarantees while significantly reducing computational overheads and memory space, making it well-suited for large-scale problems. Our approach achieves computational efficiency without compromising performance by using approximate inverse Hessian matrices and random matrix properties.

This is not the first time that the idea of using the quasi-Newton method in the subspace has been proposed. In fact, 
\cite{R-BFGS} utilizes approximate Hessian matrices instead of the Hessian matrix in their subspace algorithm. However, the update rule for the approximate Hessian needs the true Hessian of the function, which leads to memory problems when the dimension is large.
Compared to their method, our method 
does not require inverse matrix computation nor handle the true Hessian matrix.

\section{Related Work} \label{previous_work}
Our algorithm can be viewed as the stochastic and quasi-Newton variants of the deterministic subspace Newton methods proposed by \cite{Lcomdir}.
Their method uses Hessian matrices and their inverse, while ours does not. Furthermore, our method does not require a full gradient calculation in the original $n$-dimension using a random matrix $Q_k$. In this section, we note the differences between their methods as well as the existing random subspace gradient descent and (quasi-)Newton methods. 

\subsection{Deterministic Subspace Newton Method}\label{Lee}
A subspace Newton's method, using a deterministic subspace, has been studied in \cite{Lcomdir,Comdir,ZhouNewton}. the function $f$ is restricted on a subspace by defining
\begin{equation}
	\min_{\bm{d}\in\mathbb{R}^{m}}g_k(\bm{d}) := f(\bm{x}_k+P_k\bm{d}). \label{subspace}
\end{equation}
\cite{Lcomdir} proposes an efficient Newton update with linear search to \eqref{subspace}. 
The algorithm consists of 
\begin{itemize}
	\item the construction of  $\tilde{B}_k :=P_k^{\top}\nabla^2f(\bm{x}_k)P_k \in \mathbb{R}^{m_k \times m_k}$ (if necessary, modify it so as to satisfy $\sigma_{\text{min}}(\tilde{B}_k)<M$ with some constant $M >0$)
	\item compute $ \bm{x}_{k+1}= \bm{x}_k-\alpha_kP_k \tilde{B}_k^{-1}P_k^{\top}\nabla f(\bm{x}_k)$ with the step-size $\alpha_k$ chosen by the line-search procedure
	\item update $P_{k+1}$ using  $ \bm{x}_{k+1}$ e.g., by \eqref{P_kBFGS} shown later
\end{itemize}
under the following assumptions: 

\eqref{Lipdif}, \eqref{Hessian}, and \eqref{P_k}
for $f$, $H_k\in\mathbb{R}^{m_k \times m_k}$ and $P_k:=[q_1^k,q_2^k,\dots,q_{m_k}^k]\in\mathbb{R}^{n \times m_k}$, respectively.
\begin{equation}\label{Lipdif}
	\exists L>0,\ \forall\bm{u},\bm{v}\in\mathbb{R}^n,\ \|\nabla f(\bm{u})-\nabla f(\bm{v})\|\leq L\|\bm{u}-\bm{v}\|
\end{equation}
\begin{equation}\label{Hessian}
	\exists M_1,M_2\in \mathbb{R}_{>0},\quad \textup{s.t.} \quad M_1I\succeq \nabla^2f(x_k)\succeq M_2I\ (\forall k)
\end{equation}
\begin{equation}\label{P_k}
	\exists \delta\in\mathbb{R}_{>0} \mbox{ and there exists a column vector }  q_j^k \mbox{ of } P_k
	\quad \textup{s.t.}\quad \dfrac{|\nabla f(\bm{x}_k)^{\top}q_j^k|}{\|\nabla f(\bm{x}_k)\|\|q_j^k\|}\geq\delta
\end{equation}
$P_k\in\mathbb{R}^{n \times m_k}$ can be any matrix, as long as it satisfies \eqref{P_k}, including the way to determine the number of columns $m_k$. For example, the following $P_k$ is proposed, where $m_k$ is kept constant equal to $m$.
\begin{equation}\label{P_kBFGS}
	P_k=
	\left\{
	\begin{array}{ll}
		\left[\bm{x}_{k-\frac{m}{2}+1},\nabla f(\bm{x}_{k-\frac{m}{2}+1}),\dots,\bm{x}_k,\nabla f(\bm{x}_k)\right]\ & (k\geq\frac{m}{2}-1) \\
		\left[\bm{x}_0,\nabla f(\bm{x}_0),\dots,\bm{x}_k,\nabla f(\bm{x}_k)\right]\ & (k<\frac{m}{2}-1)
	\end{array}
	\right.
\end{equation}

For the above $P_k$, the expression \eqref{P_k} holds as $\delta=1$, since the column contains $\nabla f(\bm{x}_k)$. 

In \cite{Comdir}, $\{P_k\}$ is updated from  $P_0=[\nabla f(\bm{x}_0)/\|\nabla f(\bm{x}_0)\|]$
by following
\begin{equation}\label{P_kComdir}
	P_{k+1}=
	\left\{
	\begin{array}{ll}
		\left[P_k,\bm{p}_k/\|\bm{p}_k\|\right]\ & (\bm{p}_k\neq\bm{0})\\
		P_k\ & (\bm{p}_k=\bm{0}),
	\end{array}
	\right.
\end{equation}
where $\bm{p}_k=\nabla f(\bm{x}_{k+1})-P_kP_k^{\top}\nabla f(\bm{x}_{k+1})$.


The convergence properties of \cite{Lcomdir,Comdir}'s methods are shown for the cases where the objective function $f$ is nonconvex, convex, and satisfies the Polyak-Lojasiewicz condition, respectively. In \cite{ZhouNewton}, the authors use a discrete subspace, where only a subset of the coordinate vector $x$ is considered. At each step, the coordinates that need to be considered in the subspace are updated according to some rule. In this method, the size of the subspace is not fixed.

\subsection{Random Subspace Newton-based Methods}
Second-order optimization methods based on second-order approximations of $f$, such as the so-called Newton methods, can easily benefit from the subspace method because the size of the Hessian matrix is reduced to $m \times m$. For this reason, various subspace Newton optimization methods have been proposed.
For example, \cite{gower2019rsn} proposed a randomized subspace Newton for convex problems by computing the search direction using a random matrix $Q_k \in \mathbb{R}^{n \times m}$ at each iteration as
$d_k = -(Q_k^\top  \nabla^2 f(x_k) Q_k )^{-1} Q_k^\top \nabla f(x_k)$. Restricting the method to a subspace greatly reduces the computation of the inverse of the Hessian matrix to 
$O(m^3)$. \cite{cubicNewton} proposed a cubically-regularized subspace Newton method for convex problems. 
Recently, \cite{Shao,CJZ}, \cite{R-subspace}, \cite{GNC} uses second-order subspace methods to solve the non-convex optimization problem.
The subspace methods of \cite{Lcomdir,Comdir} with deterministic construction of $P_k$ also use Newton iteration to solve a nonconvex problem. These methods only need to compute the Hessian in the projected space $\mathbb{R}^m$.
However, since these methods use the Hessian matrix of the original function $f$, numerical experiments have shown that they are fast when the Hessian matrix can be easily computed, such as when the objective function can be separated for each variable, but in other cases, they are computationally heavy and are not practically applicable to large-scale problems. In other cases, however, it is expected to be computationally expensive and not practically applicable to large-scale problems.

\subsection{Existing Random Subspace Quasi-Newton Method}\label{Kovalev}
The random subspace quasi-Newton method of 
\cite{R-BFGS} replaces the Hessian matrix of $f$ with a matrix whose inverse $H_{k}$ is easy to compute and
updates iterates as $\bm{x}_{k+1}=\bm{x}_k-H_k\nabla f(\bm{x}_k)$. The sequence of iterates is shown to satisfy $\sqrt{f(\bm{x}_k)-f^*}$ converges locally superlinearly to 0 with probability 1. 

In the paper, the inverse matrix of a random subspace approximate Hessian is described as 
\begin{equation}
	H_{k+1}=\tilde{B}_k+(I-\tilde{B}_k \nabla^2f(\bm{x}_k))H_{k}(I-\nabla^2f(\bm{x}_k)\tilde{B}_k),
	\label{BFGS_existing}
\end{equation}
where $\tilde{B}_k:=Q_k(Q_k^{\top}\nabla^2f(\bm{x}_k)Q_k)^{-1}Q_k^{\top}$ using a random matrix $Q_k\in\mathbb{R}^{n\times m}(m\ll n)$.
The method can avoid taking the inverse of approximate Hessian matrices in the full-dimensional space but still needs the matrix product in the full dimension.

\subsection{Random Subspace Gradient-based Methods}
\cite{kozak2021stochastic} and \cite{kozak2023} propose a random subspace gradient-based methods. At each step, the gradient is approximately computed on the random subspace, using zeroth order approximation. By doing so, we can decrease the computation of the gradient, when automatic differentiation cannot be used, from $O(n^2)$ to $O(nd)$. This approach is especially efficient when there is no closed-form expression for the gradient or when it is costly to evaluate.

\section{Proposed Method}\label{research}

\begin{table}
	\caption{Subspace methods for unconstrained optimization. $\doublecheck$ in the column ``Subspace gradient'' indicates that approximate subspace gradient is computed by the finite difference method, and $\doublecheck$ in the column ``Subspace Hessian'' indicates that approximate subspace Hessian is computed without computing the Hessian matrix) }
	\label{table1}
	\centering
	\begin{tabular}{lccccc}
		\toprule
		& Subspace & Subspace  & Matrix  & \multicolumn{2}{c}{Type of Subspace} \\
                \cline{5-6}
		& gradient & Hessian & inv. free & Random $Q_k$ & Deterministic $P_k$\\
		\midrule 
		 \cite{kozak2021stochastic} &  $\checkmark$ & &$\checkmark$ & $\checkmark$& \\
		\midrule 
		 \cite{kozak2023} &  $\doublecheck$ & &$\checkmark$ & $\checkmark$& \\                
		\midrule 
		 \cite{gower2019rsn,R-BFGS,Shao,CJZ,GNC}  &  &$\checkmark$ & &$\checkmark$ & \\
		\midrule 
		 \cite{R-BFGS} &  & $\checkmark$& &$\checkmark$ & \\
		\midrule 
		\cite{Lcomdir} &  & $\checkmark$ & & &$\checkmark$ \\
		\midrule 
		\makecell{Ours:exact} & $\checkmark$ & $\doublecheck$& $\checkmark$&$\checkmark$ & $\checkmark$ \\
		\midrule 
	 \makecell{Ours:approx.} & $\doublecheck$ &$\doublecheck$ & $\checkmark$&$\checkmark$ & $\checkmark$\\
		\bottomrule
	\end{tabular}
\end{table}

\subsection{Randomly Projected Gradient and Construction of $P_k$}\label{method}
This section presents a construction of $P_k$ based on \cite{Lcomdir}. 
The results of \cite{Lcomdir} suggest that we could obtain benefits by incorporating gradient information into $P_k$. However, directly computing the gradient of $f$ often requires $O(n^2)$ operations, especially if we cannot use automatic differentiation techniques. Let us consider $Q_k \in \mathbb{R}^{n \times d}$, where $d \ll n$, such that each element follows a standard normal distribution.
Noticing \eqref{randmatrix} 
and using the concentration of $Q_kQ_k^{\top}/d$ around it's expectation (see \cite{random}), we have
\begin{equation}\label{approx_grad}
	\nabla f(\bm{x}_k)\approx Q_k (Q_k^{\top}\nabla f(\bm{x}_k))/d.
	\notag
\end{equation}
Consider 
the restriction of $f$ on the $d$-dimensional subspace as 
\begin{equation}
	\tilde{g}_k(\bm{d})=f(\bm{x}_k+Q_k\bm{d}),
	\notag
\end{equation}
and  notice that $\nabla \tilde{g}(\bm{0})=Q_k^{\top}\nabla f(\bm{x}_k)$.
To compute the gradient of this function, let $\bm{d}_i$ denote the $i$-th component of $\bm{d}$ and 
let $\bm{e}_i$ denote the $i$th element of the canonical basis. We have
\begin{equation}\label{subgrad}
	\dfrac{\partial \tilde{g}_k(0)}{\partial \bm{d}_i} = \lim_{\varepsilon \to 0} \dfrac{f({\bm{x}_k}+\varepsilon Q_k\bm{e}_i)-f({\bm{x}_k}-\varepsilon Q_k\bm{e}_i)}{2\varepsilon}, ~~ i=1,\ldots, d.
\end{equation}
This means that we only need $O(d)$ evaluations of $f$ to estimate $Q_k^{\top}\nabla f(\bm{x}_k)$. 

We modify $P_k$ of \cite{Lcomdir} (see the original definition \eqref{P_kBFGS}  by \cite{Lcomdir}) to use subspace gradient vectors
generated with a random matrix $Q_k$ at iteration $k$. More specifically we compute the matrix $P_k \in \mathbb{R}^{n \times m}$ as
\begin{equation}\label{approxP_k}
	P_k=\left[\dfrac{\bm{x}_{k-\frac{m}{2}+1}}{\|\bm{x}_{k-\frac{m}{2}+1}\|},\dfrac{Q_{k-\frac{m}{2}+1}Q_{k-\frac{m}{2}+1}^{\top}\nabla f(\bm{x}_{k-\frac{m}{2}+1})}{\|Q_{k-\frac{m}{2}+1}Q_{k-\frac{m}{2}+1}^{\top}\nabla f(\bm{x}_{k-\frac{m}{2}+1})\|},\dots,\dfrac{\bm{x}_k}{\|\bm{x}_k\|},\dfrac{Q_kQ_k^{\top}\nabla f(\bm{x}_k)}{\|Q_kQ_k^{\top}\nabla f(\bm{x}_k)\|}\right],
\end{equation}
where each component having a zero norm, in the denominator, is replaced by $0$.
Unlike the method proposed in \cite{Lcomdir}, the number of columns of $P_k$ is constant along the iterations. We appropriately define the initial matrix $P_0\in\mathbb{R}^{n\times m}$ under the condition that each column is normalized. At each iteration the two first columns of $P_k$ are deleted and the two columns $\dfrac{\bm{x}_k}{\|\bm{x}_k\|},\dfrac{\nabla f(\bm{x}_k)}{\|\nabla f(\bm{x}_k)\|}$ are added in $P_{k+1}$. Similarly to $\tilde{g}$, we define:
\begin{equation}\label{eq:functgk}
	{g}_k(\bm{d})=f(\bm{x}_k+P_k\bm{d}).
	\notag
\end{equation}
We also have
\begin{equation}\label{subgrad1}
	P_k^{\top}\nabla f(\bm{x}_k)=\dfrac{\partial {g}_k(0)}{\partial \bm{d}_i} = \lim_{\varepsilon \to 0} \dfrac{f({\bm{x}_k}+\varepsilon P_k\bm{e}_i)-f({\bm{x}_k}-\varepsilon P_k\bm{e}_i)}{2\varepsilon}, ~~ i=1,\ldots, m.
\end{equation}
\subsection{Approximate Hessian Inverse Matrix}\label{prepare}

For the approximate Hessian matrix $B_k^{F}$ of full-dimensional BFGS at the $k$th iteration,
the inverse matrix, $H_{k}^{F}$, of $B_k^{F}$  
is updated as
\begin{equation}\label{BFGS}
	\mbox{BFGS formula}: H_{k+1}^{F}=\textup{BFGS}(H_k^{F},\bm{s}_k,\bm{y}_k)=\left(I-\dfrac{\bm{s}_k\bm{y}_k^{\top}}{\bm{s}_k^{\top}\bm{y}_k}\right)H_k^{F}\left(I-\dfrac{\bm{y}_k\bm{s}_k^{\top}}{\bm{s}_k^{\top}\bm{y}_k}\right)+\dfrac{\bm{s}_k\bm{s}_k^{\top}}{\bm{s}_k^{\top}\bm{y}_k},
\end{equation}
where $\bm{s}_k:=\bm{x}_{k+1}-\bm{x}_k,\quad \bm{y}_k:=\nabla f(\bm{x}_{k+1})-\nabla f(\bm{x}_k).$
Instead, we apply the BFGS formula to the function $g_k$. $\bm{s}_k,\bm{y}_k$ now become
\begin{equation}\label{subQuasi}
	\bm{s'}_k=P_k^\top(\bm{x}_{k+1}-\bm{x}_k),\qquad \bm{y'}_k=\nabla {g_k}(\bm{d}_k)-\nabla {g_k}(\bm{0})=P_k^{\top}(\nabla f(\bm{x}_{k+1})-\nabla f(\bm{x}_k)).
\end{equation}
Here, $P_k\in\mathbb{R}^{n\times m}$, and $\bm{s'}_k,\bm{y'}_k\in\mathbb{R}^{m},H_k,H_{k+1}\in\mathbb{R}^{m \times m}$. 
The equation below update $H_{k}$ along a lower-dimensional subspace represented by $P_k$ as 
\begin{equation}\label{subBFGS}
	\tilde{H}_{k+1}=\textup{BFGS}(H_k,\bm{s'}_k,\bm{y'}_k).
\end{equation}

In the algorithm, the matrix $\tilde{H}_{k}$  is modified to a matrix $H_k$ such that 
\begin{equation}\label{B_knorm}
	M_1I_m\preceq H_{k}\preceq M_2I_m\ (\forall k)
\end{equation}
is satisfied. Here $M_1$ and $M_2$ are pre-chosen parameters.
To make the approximate Hessian inverse matrix positive definite, it is enough to check
if ${\bm{s'}_k}^\top \bm{y'}_k > 0$, as confirmed by the update rule \eqref{BFGS}. 

The requirement of \eqref{B_knorm} is for the worst-case iteration complexity analysis.
For a long time, guarantees of global convergence for quasi-Newton have been primarily limited to asymptotic ones (e.g., \cite{powell71,byrd96}).
There have been recent studies  (see e.g., \cite{nakayama21COAP,liu22COAP}) that show iteration complexity analysis for quasi-Newton methods, but all of them have modified the matrix $H_k$ or assumed the positive definiteness for $H_k$.
Although quite recently, 
iteration complexity analysis has been derived for the quasi-Newton method using the formula \eqref{BFGS} for the first time by \cite{jin2024globalconvBFGS} without any processing for $H_k$, we modify the matrix $H_{k}$ for satisfying \eqref{B_knorm} in this paper.

There are various ways for modifying a matrix to satisfy it, but since the modification for $m \times m$ matrices is not computationally expensive, we will modify it using the eigenvalue computation as follows:
set eigenvalues of $\tilde{H}_{k+1}$ larger than $M_2$ or smaller than $M_1$ to $M_2$ and $M_1$, respectively. 
Let $\textup{Modify\_eig}(\tilde{H}_{k+1})$ denote this modification. Notice that we only need to store an $m \times m$ matrix here. Furthermore, $\textup{Modify\_eig}$ can be done in $O(m^3)$ which is negligible compared to other operations such as the update of $\bm{x}_k$ that needs $O(nm)$ operations.

\subsection{Algorithm}
Based on the above discussion, the algorithm of the proposed method is detailed in Algorithm~\ref{alg7}.

\begin{algorithm}[tb]
	\caption{Subspace Quasi-Newton Method with Randomly Approximated Gradient}
	\label{alg7}
	Input: $\bm{x}_0$,  $P_0,\ M_1I_m\preceq H_0\preceq M_2I_m$, $\beta,c\in(0,1)$
	
	parameters: $M_1,M_2>0$.
	\begin{algorithmic}[1]
		\For {$k=0,1,\dots$}
		\State Sample  $(Q_k)_{ij}\ \sim\ N(0,1)\qquad(i=1,2,\dots,n\quad j=1,2,\dots,d)$.
		
		\State Compute $\nabla \tilde{g}_k(\bm{0})=Q_k^\top \nabla f(\bm{x}_k)$ using \eqref{subgrad}.
		\State Set $P_{k}$ by using \eqref{approxP_k} with $\bm{x}_{k}$ and $Q_k (Q_k^{\top}\nabla f(\bm{x}_k))$, and construct $g_k$ as in \eqref{subspaceprob}.
		\State $\bm{d}_k\ \leftarrow\ -H_k\nabla g_k(\bm{0})$ \label{eq:algodescent}
		\For{$i=0,1,\dots$}  
		\label{alg:linesearch_s}
		\State $\alpha_k\ \leftarrow\ \beta^i$
		\If {$f(\bm{x}_k+\alpha_kP_k\bm{d}_k)\leq g_k(\bm{0})+c\alpha_k\nabla g_k(\bm{0})^{\top}\bm{d}_k$ \label{eq:algoarmijo}} 
		\State break
		\EndIf
		\EndFor            
		\label{alg:linesearch_e}     
		\State $\bm{x}_{k+1}\ \leftarrow\   \bm{x}_k+\alpha_kP_k\bm{d}_k$
		\State $\tilde{H}_{k+1}\ \leftarrow\ \textup{BFGS}(H_k,\alpha_k\bm{d}_k,\nabla g_k(\alpha_k\bm{d}_k)-\nabla g_k(\bm{0}))$
		\label{alg:invmat_s}
		\State $H_{k+1}\ \leftarrow\ $Modify\_eig$(\tilde{H}_{k+1})$
		\label{alg:invmat_e}
		\EndFor 
	\end{algorithmic}
\end{algorithm}

The procedure from \cref{alg:linesearch_s} to \cref{alg:linesearch_e} is for 
the linear search determining the step-size $\alpha_k$ so as to satisfy Armijo's condition for the function $g_k$:
\begin{equation}\label{linsearch}
	{g_k}(\bm{d}_k)\leq {g_k}(\bm{0})+c\alpha_k\nabla {g_k}(\bm{0})^{\top}\bm{d}_k.
\end{equation}
At first, we will show a lower bound, for all $k$,
for the step-size $\alpha_k$ computed by the line-search procedure.
\begin{theorem}\label{stepsize2}
	Suppose that Assumption~\ref{Lipdif2} is satisfied. For any $k$, the step size $\alpha_k$ satisfies 
	\begin{equation}\label{stepsize3}
		\alpha_k\geq\bar{\alpha}:=\min\left(1,\dfrac{2\beta(1-c)}{mM_2L}\right).
	\end{equation}
\end{theorem}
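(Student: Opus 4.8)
The plan is to run the standard Armijo line-search argument, but carried out entirely in the subspace for the function $g_k$, and using the two structural facts available to us: the curvature bound \eqref{B_knorm} on $H_k$ (which controls $\bm{d}_k$), and a Lipschitz bound on $\nabla g_k$ inherited from the Lipschitz continuity of $\nabla f$ together with the fact that $P_k$ has normalized columns. First I would note that $g_k$ is continuously differentiable and its gradient is Lipschitz with constant at most $mL$: indeed $\nabla g_k(\bm{d}) = P_k^\top \nabla f(\bm{x}_k + P_k\bm{d})$, so $\|\nabla g_k(\bm{d}) - \nabla g_k(\bm{d}')\| \le \|P_k\|_{\mathrm{op}} \cdot L \cdot \|P_k\|_{\mathrm{op}} \|\bm{d} - \bm{d}'\| \le \|P_k\|_F^2 L \|\bm{d}-\bm{d}'\|$, and since each of the $m$ columns of $P_k$ has norm at most $1$ we get $\|P_k\|_F^2 \le m$, hence $\|P_k\|_{\mathrm{op}}^2 \le m$. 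So $g_k$ has $mL$-Lipschitz gradient.

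Next, from the descent-lemma-type inequality for an $mL$-smooth function, for any step $\alpha$,
\begin{equation}
	g_k(\alpha \bm{d}_k) \le g_k(\bm{0}) + \alpha \nabla g_k(\bm{0})^\top \bm{d}_k + \tfrac{mL}{2}\alpha^2 \|\bm{d}_k\|^2.
	\notag
\end{equation}
Comparing this with the Armijo target $g_k(\bm{0}) + c\alpha \nabla g_k(\bm{0})^\top \bm{d}_k$, a sufficient condition for acceptance of step $\alpha$ is $\tfrac{mL}{2}\alpha^2\|\bm{d}_k\|^2 \le -(1-c)\alpha \nabla g_k(\bm{0})^\top \bm{d}_k$, i.e. $\alpha \le \dfrac{2(1-c)\,(-\nabla g_k(\bm{0})^\top \bm{d}_k)}{mL\|\bm{d}_k\|^2}$. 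Here I use $\bm{d}_k = -H_k \nabla g_k(\bm{0})$: the numerator is $\nabla g_k(\bm{0})^\top H_k \nabla g_k(\bm{0}) \ge M_1\|\nabla g_k(\bm{0})\|^2$ by \eqref{B_knorm}, while the denominator is $\|H_k \nabla g_k(\bm{0})\|^2 \le M_2^2 \|\nabla g_k(\bm{0})\|^2$. Therefore the ratio is at least $\dfrac{2(1-c)M_1}{mL M_2^2}$. (I would double-check here whether the intended bound in \eqref{stepsize3}, which reads $\tfrac{2\beta(1-c)}{mM_2L}$, corresponds to a slightly different normalization of $H_k$ or a cruder estimate $M_1 = 1$, $M_2^2$ replaced by $M_2$; the structure of the argument is identical regardless, only the constants change.)

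Finally, to convert the sufficient condition into the claimed lower bound on the \emph{accepted} $\alpha_k$, I invoke the backtracking mechanism: the line search tries $\alpha = 1, \beta, \beta^2, \dots$ and stops at the first $\beta^i$ satisfying \eqref{linsearch}. If $1$ already works, $\alpha_k = 1 \ge \bar\alpha$. Otherwise, the first accepted value $\beta^i$ satisfies $\beta^{i-1} > \bar\alpha_0$ (the threshold from the previous paragraph), since $\beta^{i-1}$ was rejected; hence $\alpha_k = \beta^i = \beta \cdot \beta^{i-1} > \beta \bar\alpha_0$. Combining the two cases gives $\alpha_k \ge \min(1, \beta\bar\alpha_0)$, which is exactly the form of \eqref{stepsize3}. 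The only genuinely delicate point — and the one I would state carefully — is the operator-norm bound $\|P_k\|_{\mathrm{op}}^2 \le m$ from the normalized columns; everything else is the textbook backtracking-Armijo estimate transplanted to the subspace coordinates.
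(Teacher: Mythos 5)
Your overall route is the same as the paper's: the descent lemma, a sufficient-decrease threshold for the Armijo test, and the backtracking factor $\beta$ to pass from the rejected trial step to the accepted one. The operator-norm bound $\|P_k\|_{\mathrm{op}}\le\sqrt m$ from the normalized columns is exactly Lemma~\ref{P_knorm}, and whether you inject the factor $m$ through the $mL$-smoothness of $g_k$ or, as the paper does, through $\|P_k\bm{d}_k\|^2\le m\|\bm{d}_k\|^2$ inside the descent lemma for $f$ is immaterial.

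The one genuine gap is the step where you lower-bound the ratio $\bigl(-\nabla g_k(\bm{0})^\top\bm{d}_k\bigr)/\|\bm{d}_k\|^2$. Bounding the numerator by $M_1\|\nabla g_k(\bm{0})\|^2$ and the denominator by $M_2^2\|\nabla g_k(\bm{0})\|^2$ separately only gives $M_1/M_2^2$, hence a threshold $2\beta(1-c)M_1/(mM_2^2L)$, which is strictly weaker than the claimed $\bar\alpha=\min\bigl(1,\,2\beta(1-c)/(mM_2L)\bigr)$ whenever $M_1<M_2$ — so as written you do not prove the theorem's stated constant, only a degraded version of it. The fix (and what the paper does) is to treat the quantity as a Rayleigh quotient in the variable $\bm{d}_k$ rather than in $\nabla g_k(\bm{0})$: since $\bm{d}_k=-H_k\nabla g_k(\bm{0})$, one has $-\nabla g_k(\bm{0})^\top\bm{d}_k=\bm{d}_k^\top H_k^{-1}\bm{d}_k\ge\|\bm{d}_k\|^2/M_2$, because $H_k\preceq M_2I_m$ implies $H_k^{-1}\succeq M_2^{-1}I_m$. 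This yields the ratio $1/M_2$, recovers the stated $\bar\alpha$ exactly, and shows that the lower bound $M_1I_m\preceq H_k$ is not even needed for this theorem. With that single substitution the rest of your argument goes through verbatim.
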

All of the proofs in this paper are in Appendix~\ref{sec:allproof}, including the proof of this theorem (see  Appendix~\ref{sec:stepsize_proof}).

Theorem~\ref{stepsize2} shows that $\alpha_k$ is lower bounded by a constant $\bar{\alpha}$, which implies that
the iteration number from \cref{alg:linesearch_s} to \cref{alg:linesearch_e} is bounded 
and does not affect the worst-case iteration complexity.
The procedure \cref{alg:invmat_e} is to ensure \eqref{B_knorm}.
To make the algorithm more practical, we can replace it with a simpler rule such as applying \cref{alg:invmat_e} only if ${\bm{s'}_k}^\top \bm{y'}_k < 0$.

It should be noted that the algorithm does not need the calculation of the full-dimensional vector $\nabla f(\bm{x}_k)$ nor the (subspace) Hessian matrix. We, however, need to estimate $Q_k^\top\nabla f(\bm{x}_k)$ as in \eqref{subgrad}.
The main computation cost per iteration comes from the evaluation of the following subspace gradients: $\nabla \tilde{g}_k(\bm{0})=Q_k^\top \nabla f(\bm{x}_k)$, $\nabla g_k(\bm{0})=P_k^\top \nabla f(\bm{x}_k)$ and $\nabla g_k(\alpha_k\bm{d}_k)$.

\section{Global Convergence Rate}\label{subsec:1}
Before starting the convergence theorems, we recall two lemmas on random matrices:
\begin{lem}\label{rand_norm2}
	\textup{(\cite[Lemma 5.3.2]{random},\cite[Lemma 2.1]{R-subspace})}
	Let $Q\in\mathbb{R}^{n\times d}$ be a random Gaussian matrix whose elements follow independently a $N(0,1)$ distribution. There exists a constant $\mathcal{C}_0$ such that for any $0<\epsilon<1$ and for any $\bm{x}\in\mathbb{R}^n$,
	\begin{equation}\label{vectornorm}
		(1-\epsilon)\|\bm{x}\|^2\leq\left\|\dfrac{1}{\sqrt{d}}Q^{\top}\bm{x}\right\|^2\leq(1+\epsilon)\|\bm{x}\|^2
	\end{equation}
	with probability at least $1-2e^{-\mathcal{C}_0\epsilon^2d}$. 
\end{lem}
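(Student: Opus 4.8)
The plan is to reduce the statement to a one-dimensional concentration inequality for a chi-squared random variable and then apply a standard Chernoff (moment-generating-function) argument to control both tails. First I would exploit homogeneity: both sides of \eqref{vectornorm} scale like $\|\bm{x}\|^2$ under $\bm{x}\mapsto\lambda\bm{x}$, and the event is invariant under this scaling, so it suffices to prove the claim for a fixed unit vector $\bm{x}$ with $\|\bm{x}\|=1$. For such an $\bm{x}$, I would examine the entries of $Q^{\top}\bm{x}\in\mathbb{R}^d$. The $j$-th entry is $(Q^{\top}\bm{x})_j=\sum_{i=1}^n Q_{ij}x_i$, a linear combination of the independent $N(0,1)$ entries of the $j$-th column of $Q$; hence $(Q^{\top}\bm{x})_j\sim N(0,\|\bm{x}\|^2)=N(0,1)$. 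Because distinct columns of $Q$ use disjoint, independent sets of entries, the $d$ coordinates $(Q^{\top}\bm{x})_1,\dots,(Q^{\top}\bm{x})_d$ are mutually independent standard Gaussians.

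Consequently $W:=\|Q^{\top}\bm{x}\|^2=\sum_{j=1}^d (Q^{\top}\bm{x})_j^2$ is a sum of $d$ independent squared standard Gaussians, i.e.\ $W\sim\chi^2_d$, with $\mathbb{E}[W]=d$, so that $\mathbb{E}\bigl[\|d^{-1/2}Q^{\top}\bm{x}\|^2\bigr]=1=\|\bm{x}\|^2$. The inequality \eqref{vectornorm} is then exactly the event $\{(1-\epsilon)d\le W\le(1+\epsilon)d\}$, and the lemma reduces to the two-sided tail estimate
\begin{equation}
\Pr\bigl[\,|W-d|>\epsilon d\,\bigr]\le 2e^{-\mathcal{C}_0\epsilon^2 d}.
\notag
\end{equation}
I would obtain this from the chi-squared moment generating function $\mathbb{E}[e^{tW}]=(1-2t)^{-d/2}$, valid for $t<\tfrac12$. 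For the upper tail, Markov applied to $e^{tW}$ gives $\Pr[W\ge(1+\epsilon)d]\le e^{-t(1+\epsilon)d}(1-2t)^{-d/2}$; optimizing over $t$ yields the classical bound $\exp\bigl(-\tfrac{d}{2}(\epsilon-\log(1+\epsilon))\bigr)$, and symmetrically for the lower tail with $-t$. A short computation using $\epsilon-\log(1+\epsilon)\ge c\,\epsilon^2$ and $-\epsilon-\log(1-\epsilon)\ge c\,\epsilon^2$ on $0<\epsilon<1$ (for a suitable absolute constant $c>0$) converts both exponents into the advertised $\mathcal{C}_0\epsilon^2 d$ form; taking the smaller constant for the two tails and summing the two probabilities produces the factor $2$.

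The only genuinely delicate step is the last one: pinning down a single absolute constant $\mathcal{C}_0$ that works simultaneously for both tails and uniformly over $0<\epsilon<1$. This requires the quadratic lower bounds on $\epsilon-\log(1+\epsilon)$ and $-\epsilon-\log(1-\epsilon)$, which hold on the bounded interval $(0,1)$ but degrade as $\epsilon\to1^-$ for the lower tail; restricting to $\epsilon<1$ (as the hypothesis does) keeps these bounds away from their singular behavior, so a finite $\mathcal{C}_0$ exists. Everything else—the Gaussianity of the coordinates, their independence, and the identification with $\chi^2_d$—is routine, and the Chernoff optimization is a standard calculation that I would cite or carry out in one line rather than belabor, since the quantitative form matches the Laurent–Massart deviation inequalities for chi-squared variables.
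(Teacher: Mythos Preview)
Your argument is correct and is the standard Johnson--Lindenstrauss/chi-squared concentration proof. The paper itself does not prove this lemma: it is quoted verbatim from \cite[Lemma 5.3.2]{random} and \cite[Lemma 2.1]{R-subspace} and used as a black box, so there is no in-paper proof to compare against. Your reduction to a unit vector, identification of $\|Q^\top\bm{x}\|^2$ with a $\chi^2_d$ variable via independence of the columns of $Q$, and Chernoff optimization are exactly how the cited sources obtain the result.

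One small quibble: your remark that the lower-tail exponent ``degrades as $\epsilon\to1^-$'' is backwards. On $(0,1)$ the ratio $(-\epsilon-\log(1-\epsilon))/\epsilon^2$ is bounded below by its limit $1/2$ at $\epsilon\to0^+$ and blows up as $\epsilon\to1^-$, so the lower tail only improves near $\epsilon=1$. The binding constraint on $\mathcal{C}_0$ actually comes from the \emph{upper} tail at $\epsilon=1$, where $(\epsilon-\log(1+\epsilon))/\epsilon^2$ attains its infimum $1-\log 2$ over $(0,1]$; this gives $\mathcal{C}_0=(1-\log 2)/2$ as an admissible choice. This does not affect the validity of your proof, only the commentary.
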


\begin{lem}\label{rand_norm}	
	\textup{(\cite[Lemma 2.2]{R-subspace})}
	Let $Q\in\mathbb{R}^{n\times d}$ be a random matrix whose elements follow independently a $N(0,1)$ distribution. There exists a constant $\mathcal{C}$ such that
	\begin{equation}\label{oparationnorm}
		\dfrac{1}{d}\|QQ^{\top}\|_{\textup{op}}\leq \mathcal{C}\dfrac{n}{d}
	\end{equation}
	with probability at least $1-2e^{-d}. $
\end{lem}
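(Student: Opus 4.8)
The plan is to relate the operator norm of $QQ^{\top}$ to the largest singular value of $Q$, and then invoke the standard nonasymptotic bound on the top singular value of a Gaussian matrix. Since $QQ^{\top}$ is symmetric positive semidefinite, its operator norm equals its largest eigenvalue, so
\[
\|QQ^{\top}\|_{\textup{op}}=\sigma_{\max}(Q)^2,
\]
where $\sigma_{\max}(Q)$ denotes the largest singular value of $Q$. Thus it suffices to produce a high-probability upper bound on $\sigma_{\max}(Q)$ of order $\sqrt{n}+\sqrt{d}$, and then square it.

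The key estimate I would use is that, for $Q\in\mathbb{R}^{n\times d}$ with i.i.d.\ $N(0,1)$ entries and any $t\ge 0$,
\[
\sigma_{\max}(Q)\le\sqrt{n}+\sqrt{d}+t
\]
holds with probability at least $1-e^{-t^2/2}$. I would establish this in two steps. First, Gordon's Gaussian min--max comparison (Slepian--Gordon) inequality gives the expectation bound $\mathbb{E}[\sigma_{\max}(Q)]\le\sqrt{n}+\sqrt{d}$. Second, the map $Q\mapsto\sigma_{\max}(Q)=\|Q\|_{\textup{op}}$ is $1$-Lipschitz with respect to the Frobenius norm, since $\bigl|\,\|Q\|_{\textup{op}}-\|Q'\|_{\textup{op}}\bigr|\le\|Q-Q'\|_{\textup{op}}\le\|Q-Q'\|_{F}$; viewing the $nd$ entries of $Q$ as a single standard Gaussian vector, the Gaussian concentration inequality for Lipschitz functions yields $\mathbb{P}(\sigma_{\max}(Q)\ge\mathbb{E}[\sigma_{\max}(Q)]+t)\le e^{-t^2/2}$. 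Combining the two gives the displayed tail bound.

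It then remains to calibrate $t$ and simplify. Choosing $t=\sqrt{2d}$ makes the failure probability at most $e^{-d}\le 2e^{-d}$, which matches (and is in fact slightly stronger than) the stated probability, and gives $\sigma_{\max}(Q)\le\sqrt{n}+(1+\sqrt{2})\sqrt{d}$ on the good event. Since $d\le n$ (indeed $d\ll n$), we have $\sqrt{d}\le\sqrt{n}$, so $\sigma_{\max}(Q)\le(2+\sqrt{2})\sqrt{n}$; squaring and dividing by $d$ yields
\[
\frac{1}{d}\|QQ^{\top}\|_{\textup{op}}=\frac{\sigma_{\max}(Q)^2}{d}\le(2+\sqrt{2})^2\,\frac{n}{d},
\]
so the claim holds with $\mathcal{C}=(2+\sqrt{2})^2$. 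The only genuinely nontrivial ingredient is the expectation bound $\mathbb{E}[\sigma_{\max}(Q)]\le\sqrt{n}+\sqrt{d}$, which rests on Gordon's comparison inequality; everything else (the Lipschitz concentration step and the elementary algebra) is routine. If one prefers to avoid reproving Gordon's inequality, the entire bound $\sigma_{\max}(Q)\le\sqrt{n}+\sqrt{d}+t$ can be cited directly from a standard reference on nonasymptotic random matrix theory, as is done here via \cite{random}.
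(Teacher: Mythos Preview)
The paper does not actually prove this lemma: it is stated with an explicit citation to \cite[Lemma 2.2]{R-subspace} and used as a black box thereafter. So there is no ``paper's own proof'' to compare against; your proposal supplies a proof where the paper simply quotes one.

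That said, your argument is the standard one and is correct. The identification $\|QQ^{\top}\|_{\textup{op}}=\sigma_{\max}(Q)^2$, the expectation bound $\mathbb{E}[\sigma_{\max}(Q)]\le\sqrt{n}+\sqrt{d}$ via Gordon/Slepian comparison, and the Gaussian Lipschitz concentration step are exactly the ingredients behind the result as it appears in the nonasymptotic random matrix literature (e.g.\ \cite{random}), which is ultimately what \cite{R-subspace} relies on. Your calibration $t=\sqrt{2d}$ and the resulting explicit constant $\mathcal{C}=(2+\sqrt{2})^2$ are fine. The only point worth flagging is that you invoke $d\le n$ to pass from $\sqrt{n}+(1+\sqrt{2})\sqrt{d}$ to $(2+\sqrt{2})\sqrt{n}$; this is not part of the lemma statement itself, but it is the standing regime of the paper ($d\ll n$), so the assumption is harmless here.
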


Let us fix $\epsilon\in (0,1)$. From Lemmas~\ref{rand_norm2} and \ref{rand_norm}, we have that both \eqref{vectornorm} and \eqref{oparationnorm} hold with
probability at least $1-(2e^{-\mathcal{C}_0\epsilon^2d}+2e^{-d})$ for a random matrix $Q_k$.
From then on let us define
\begin{equation}\label{peps}
	p(\epsilon):=1-(2e^{-\mathcal{C}_0\epsilon^2d}+2e^{-d}).
\end{equation}
The following lemma is a direct consequence of Lemma \ref{rand_norm}.

\begin{lem}\label{P_knorm}
	The operator norm of $P_k$ satisfies
	\begin{equation}
		\|P_k\|_{\textup{op}}=\sup_{\|\bm{v}\|\neq0}\dfrac{\|P_k\bm{v}\|}{\|\bm{v}\|}=\sup_{\|\bm{v}\|=1}\|P_k\bm{v}\|\leq \sqrt{m}.
		\notag
	\end{equation}
\end{lem}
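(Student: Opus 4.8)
The plan is to bound $\|P_k\|_{\textup{op}}$ by controlling each column of $P_k$ and then using the fact that the operator norm of a matrix is at most the Frobenius norm. Recall from \eqref{approxP_k} that $P_k$ has exactly $m$ columns, and each column is either the zero vector or a unit vector: the "position" columns $\bm{x}_j/\|\bm{x}_j\|$ have norm $1$ (or are $0$ by the stated convention), and the "gradient" columns $Q_jQ_j^{\top}\nabla f(\bm{x}_j)/\|Q_jQ_j^{\top}\nabla f(\bm{x}_j)\|$ likewise have norm $1$ (or $0$). Hence every column of $P_k$ has Euclidean norm at most $1$.

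First I would write, for any $\bm{v}\in\mathbb{R}^m$ with $\|\bm{v}\|=1$, the bound
\begin{equation}
	\|P_k\bm{v}\| = \left\|\sum_{j=1}^{m} v_j (P_k)_{\cdot j}\right\| \le \sum_{j=1}^{m} |v_j|\, \|(P_k)_{\cdot j}\| \le \sum_{j=1}^{m} |v_j|,
	\notag
\end{equation}
using the triangle inequality and the column-norm bound above. Then by Cauchy--Schwarz, $\sum_{j=1}^m |v_j| \le \sqrt{m}\,\|\bm{v}\| = \sqrt{m}$. Taking the supremum over all unit $\bm{v}$ yields $\|P_k\|_{\textup{op}}\le\sqrt{m}$. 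Equivalently, one can note $\|P_k\|_{\textup{op}}\le\|P_k\|_F = \bigl(\sum_{j=1}^m\|(P_k)_{\cdot j}\|^2\bigr)^{1/2}\le\sqrt{m}$, which is the same estimate.

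There is essentially no obstacle here: the lemma is a direct consequence of the construction \eqref{approxP_k}, in which every column is normalized (or zeroed), so it reduces to the elementary inequality relating the operator and Frobenius norms. The only point worth a sentence of care is the degenerate case where some columns vanish, which only decreases $\|P_k\|_F$, so the bound still holds. Note that the reference to Lemma~\ref{rand_norm} in the surrounding text is not actually needed for this particular bound, since normalization of the columns already gives it; Lemma~\ref{rand_norm} would instead be relevant for bounding quantities like $\|Q_kQ_k^{\top}\|_{\textup{op}}$ that appear elsewhere.
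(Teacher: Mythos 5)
Your proof is correct and rests on exactly the same fact as the paper's: every column of $P_k$ in \eqref{approxP_k} is normalized (or zero), so the $\sqrt{m}$ comes from having $m$ such columns. The paper phrases it dually --- it passes to $P_k^{\top}$ and notes each coordinate of $P_k^{\top}\bm{v}$ is at most $1$ in absolute value for unit $\bm{v}$ --- while you bound $\|P_k\bm{v}\|$ directly via the triangle inequality and Cauchy--Schwarz (equivalently, the Frobenius-norm bound), but these are the same elementary estimate and your observation that Lemma~\ref{rand_norm} is not needed here is also accurate.
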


Let us consider the following assumption: 
\begin{asp}\label{Lipdif2}
	$\exists L>0, 
	\|\nabla f(\bm{u})-\nabla f(\bm{v})\|\leq L\|\bm{u}-\bm{v}\|$
	holds for $\forall\bm{u},\bm{v}\in\mathbb{R}^n$.
\end{asp}

Under this assumption, we derive, in the following two subsections, the worst-case iteration complexity for our algorithm using either automatic differentiation or finite difference based on \eqref{subgrad}.

\subsection{Exact Subspace Gradient Computation } \label{sec:exact_subgrad}
In this subsection, we analyze the global convergence speed of our algorithm assuming that the gradients $Q_k^{\top}\nabla f(\bm{x}_k)$ and $P_k^{\top}\nabla f(\bm{x}_k)$ are computed exactly e.g., by automatic differentiation.
In the following, we present three convergence theorems, respectively, when $f$ is nonconvex, convex, and satisfies the Polyak-Lojasiewicz condition.

\begin{theorem}\label{nonconvex2}
	Suppose that Assumption \ref{Lipdif2} is satisfied. Then for any $\epsilon \in (0,1)$ and any initial point initial point $\bm{x}_0$, $ \mathbb{E}\left[\|\nabla f(\bm{x}_j)\|\right]$ converges to $0$ and 
	\begin{equation}\label{eq:nonconvex2}
		\mathbb{E}\left[\min_{0\leq j\leq k}\|\nabla f(\bm{x}_j)\|\right]\le \sqrt{\frac{1}{k+1} \left(c\bar{\alpha} M_1\left(\dfrac{(1-\epsilon)d}{\mathcal{C}n }\right)^2p(\epsilon)\right)^{-1}(f(\bm{x}_0)-f^*)} .
	\end{equation} 
\end{theorem}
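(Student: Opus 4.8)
The plan is to chain together a per-iteration descent estimate coming from the Armijo line search with the random-matrix bounds in Lemmas~\ref{rand_norm2} and \ref{rand_norm}, and then telescope. First I would start from the Armijo condition \eqref{linsearch} which, combined with the definition $\bm{d}_k=-H_k\nabla g_k(\bm{0})$ and the lower bound $H_k\succeq M_1 I_m$ from \eqref{B_knorm}, gives a decrease of the form
\begin{equation}
	f(\bm{x}_k)-f(\bm{x}_{k+1})\ \geq\ c\,\alpha_k\,\nabla g_k(\bm{0})^{\top}H_k\nabla g_k(\bm{0})\ \geq\ c\,\alpha_k\,M_1\,\|\nabla g_k(\bm{0})\|^2 .
	\notag
\end{equation}
Using Theorem~\ref{stepsize2} to replace $\alpha_k$ by the constant $\bar\alpha$, this becomes $f(\bm{x}_k)-f(\bm{x}_{k+1})\geq c\bar\alpha M_1\|\nabla g_k(\bm{0})\|^2 = c\bar\alpha M_1\|P_k^{\top}\nabla f(\bm{x}_k)\|^2$.

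Next I would lower-bound $\|P_k^{\top}\nabla f(\bm{x}_k)\|$ in terms of $\|\nabla f(\bm{x}_k)\|$. The key point is that (up to normalization) one column of $P_k$ is the randomly projected gradient direction $Q_kQ_k^{\top}\nabla f(\bm{x}_k)/\|Q_kQ_k^{\top}\nabla f(\bm{x}_k)\|$, so $\|P_k^{\top}\nabla f(\bm{x}_k)\|$ is at least the absolute inner product of $\nabla f(\bm{x}_k)$ with that unit vector, i.e. $|\nabla f(\bm{x}_k)^{\top}Q_kQ_k^{\top}\nabla f(\bm{x}_k)|/\|Q_kQ_k^{\top}\nabla f(\bm{x}_k)\|$. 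On the event where \eqref{vectornorm} and \eqref{oparationnorm} hold, the numerator $\nabla f(\bm{x}_k)^{\top}Q_kQ_k^{\top}\nabla f(\bm{x}_k)=\|Q_k^{\top}\nabla f(\bm{x}_k)\|^2\geq(1-\epsilon)d\|\nabla f(\bm{x}_k)\|^2$, while the denominator satisfies $\|Q_kQ_k^{\top}\nabla f(\bm{x}_k)\|\leq\|Q_kQ_k^{\top}\|_{\mathrm{op}}\|\nabla f(\bm{x}_k)\|\leq \mathcal{C}n\|\nabla f(\bm{x}_k)\|$. Combining gives, on that event, $\|P_k^{\top}\nabla f(\bm{x}_k)\|\geq \frac{(1-\epsilon)d}{\mathcal{C}n}\|\nabla f(\bm{x}_k)\|$. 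Taking conditional expectation over $Q_k$ and using that the bound is $\geq 0$ always, $\mathbb{E}[\|P_k^{\top}\nabla f(\bm{x}_k)\|^2\mid \bm{x}_k]\geq \bigl(\tfrac{(1-\epsilon)d}{\mathcal{C}n}\bigr)^2 p(\epsilon)\|\nabla f(\bm{x}_k)\|^2$, where $p(\epsilon)$ is the probability from \eqref{peps} that both random-matrix events hold.

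Then I would combine the two displays: taking full expectations,
\begin{equation}
	\mathbb{E}[f(\bm{x}_k)-f(\bm{x}_{k+1})]\ \geq\ c\bar\alpha M_1\Bigl(\tfrac{(1-\epsilon)d}{\mathcal{C}n}\Bigr)^2 p(\epsilon)\,\mathbb{E}[\|\nabla f(\bm{x}_k)\|^2].
	\notag
\end{equation}
Summing over $k=0,\dots,K$ telescopes the left side to $f(\bm{x}_0)-\mathbb{E}[f(\bm{x}_{K+1})]\leq f(\bm{x}_0)-f^*$, giving $\sum_{k=0}^{K}\mathbb{E}[\|\nabla f(\bm{x}_k)\|^2]\leq \bigl(c\bar\alpha M_1(\tfrac{(1-\epsilon)d}{\mathcal{C}n})^2 p(\epsilon)\bigr)^{-1}(f(\bm{x}_0)-f^*)$. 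Bounding $(K+1)\min_{0\le j\le K}\mathbb{E}[\|\nabla f(\bm{x}_j)\|^2]\leq \sum_k\mathbb{E}[\|\nabla f(\bm{x}_k)\|^2]$, and then using $\mathbb{E}[\min_j\|\nabla f(\bm{x}_j)\|]\leq \sqrt{\mathbb{E}[\min_j\|\nabla f(\bm{x}_j)\|^2]}\leq\sqrt{\min_j\mathbb{E}[\|\nabla f(\bm{x}_j)\|^2]}$ by Jensen and monotonicity, yields \eqref{eq:nonconvex2}. Convergence of $\mathbb{E}[\|\nabla f(\bm{x}_j)\|]$ to $0$ follows because the series $\sum_k \mathbb{E}[\|\nabla f(\bm{x}_k)\|^2]$ converges, so its terms tend to $0$, and again Jensen gives $\mathbb{E}[\|\nabla f(\bm{x}_k)\|]\to 0$.

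The main obstacle I anticipate is the second step: justifying the lower bound on $\|P_k^{\top}\nabla f(\bm{x}_k)\|$ cleanly. One must be careful that the relevant column of $P_k$ really is the current-iteration projected gradient $Q_kQ_k^{\top}\nabla f(\bm{x}_k)$ (as \eqref{approxP_k} indicates the last block column is built from $\bm{x}_k$ and $Q_kQ_k^{\top}\nabla f(\bm{x}_k)$), handle the degenerate case $Q_kQ_k^{\top}\nabla f(\bm{x}_k)=\bm{0}$ (which has probability $0$, or is absorbed since then $\nabla f(\bm{x}_k)$ is near zero anyway on the good event), and correctly take the conditional expectation so that the probability $p(\epsilon)$ factor appears. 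The telescoping and Jensen steps are routine; the Armijo-to-decrease step is standard once \eqref{B_knorm} and Theorem~\ref{stepsize2} are invoked.
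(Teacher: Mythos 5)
Your proposal is correct and follows essentially the same route as the paper's proof: the Armijo decrease combined with $H_k\succeq M_1 I_m$, the lower bound on $\|P_k^{\top}\nabla f(\bm{x}_k)\|$ via the normalized column $Q_kQ_k^{\top}\nabla f(\bm{x}_k)/\|Q_kQ_k^{\top}\nabla f(\bm{x}_k)\|$ together with Lemmas~\ref{rand_norm2} and \ref{rand_norm}, the conditional expectation yielding the $p(\epsilon)$ factor, telescoping, and Jensen's inequality. The only cosmetic differences are the ordering of the min/average/Jensen steps and that you argue $\mathbb{E}[\|\nabla f(\bm{x}_k)\|]\to 0$ directly from summability rather than citing an external proposition as the paper does; both are valid.
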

Here $\bar{\alpha}$ is defined 
in \eqref{stepsize3}, $c, M_1$ are parameters of Algorithm~\ref{alg7}, $\mathcal{C}$ and $p(\epsilon)$ are from \eqref{oparationnorm} and \eqref{peps}, respectively, and $f^*$ is the optimal value of the concerned problem.

\begin{theorem}\label{convex2}
	Suppose that Assumption~\ref{Lipdif2} is satisfied. In addition, suppose that $f$ is convex and for any initial point $\bm{x}_0$, let us assume that 
	\begin{equation}\label{R_0}
		R_0=\max_{\bm{x}:f(\bm{x})\leq f(\bm{x}_0)}\min_{\bm{x}^* \in \Omega}\|\bm{x}-\bm{x}^*\|
	\end{equation}
	is finite (Here $\Omega$ denotes the set of optimal solutions). Then for all  $\epsilon \in (0,1)$ and for all $k\ge 0$, 
	\begin{equation}
		\mathbb{E}[f(\bm{x}_k)-f^*] 
		\leq\dfrac{L \mathcal{C}^2n^2R_0^2}{2\mathcal{C}^2n^2+L cM_1p(\epsilon)(1-\epsilon)^2d^2k\bar{\alpha}}.
		\notag
	\end{equation}
\end{theorem}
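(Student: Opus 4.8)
The plan is to follow the standard template for convergence of projected-gradient / subspace-Newton methods in the convex case, combining the per-iteration decrease guaranteed by the Armijo line search with convexity to set up a recursion on the optimality gap. First I would establish a per-iteration descent inequality. From the Armijo condition \eqref{linsearch} together with $\bm{d}_k=-H_k\nabla g_k(\bm{0})$ and $H_k\succeq M_1 I_m$, I get $f(\bm{x}_k)-f(\bm{x}_{k+1})\ge c\alpha_k M_1\|\nabla g_k(\bm{0})\|^2 = c\alpha_k M_1\|P_k^\top\nabla f(\bm{x}_k)\|^2$. Using Theorem~\ref{stepsize2} to replace $\alpha_k$ by $\bar\alpha$ gives a clean bound $f(\bm{x}_k)-f(\bm{x}_{k+1})\ge c\bar\alpha M_1\|P_k^\top\nabla f(\bm{x}_k)\|^2$. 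The crucial probabilistic step, exactly as in Theorem~\ref{nonconvex2}, is to lower-bound $\|P_k^\top\nabla f(\bm{x}_k)\|^2$ in expectation: since one block of the columns of $P_k$ is (the normalization of) $Q_kQ_k^\top\nabla f(\bm{x}_k)$, I would use $\|P_k^\top\nabla f(\bm{x}_k)\|\ge |u_k^\top\nabla f(\bm{x}_k)|$ where $u_k$ is that unit column, then invoke Lemmas~\ref{rand_norm2} and \ref{rand_norm} to show that on the event of probability $p(\epsilon)$ one has $|u_k^\top\nabla f(\bm{x}_k)|\ge \tfrac{(1-\epsilon)d}{\mathcal{C}n}\|\nabla f(\bm{x}_k)\|$, hence $\mathbb{E}[\|P_k^\top\nabla f(\bm{x}_k)\|^2\mid \bm{x}_k]\ge p(\epsilon)\big(\tfrac{(1-\epsilon)d}{\mathcal{C}n}\big)^2\|\nabla f(\bm{x}_k)\|^2$.

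Next I would bring in convexity. Let $\bm{x}^*$ be the projection of $\bm{x}_k$ onto $\Omega$; monotonicity of $f(\bm{x}_k)$ (from the descent inequality) keeps all iterates in the sublevel set $\{f\le f(\bm{x}_0)\}$, so $\|\bm{x}_k-\bm{x}^*\|\le R_0$ by the definition \eqref{R_0}. Convexity gives $f(\bm{x}_k)-f^*\le \nabla f(\bm{x}_k)^\top(\bm{x}_k-\bm{x}^*)\le \|\nabla f(\bm{x}_k)\|R_0$, i.e. $\|\nabla f(\bm{x}_k)\|\ge (f(\bm{x}_k)-f^*)/R_0$. Plugging this into the expected descent inequality and writing $\Delta_k := \mathbb{E}[f(\bm{x}_k)-f^*]$, I obtain a recursion of the form
\begin{equation}
\Delta_k-\Delta_{k+1}\ \ge\ \frac{c\bar\alpha M_1 p(\epsilon)(1-\epsilon)^2 d^2}{\mathcal{C}^2 n^2 R_0^2}\,\mathbb{E}\big[(f(\bm{x}_k)-f^*)^2\big]\ \ge\ \frac{c\bar\alpha M_1 p(\epsilon)(1-\epsilon)^2 d^2}{\mathcal{C}^2 n^2 R_0^2}\,\Delta_k^2,
\notag
\end{equation}
where the last step is Jensen's inequality. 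Here I should be a little careful: conditioning on $\bm{x}_k$, the bound $f(\bm{x}_k)-f(\bm{x}_{k+1})\ge c\bar\alpha M_1\|P_k^\top\nabla f(\bm{x}_k)\|^2$ is pathwise in $Q_k$, so taking expectation over $Q_k$ first and then the tower property is legitimate, and the $R_0$ bound is deterministic given $\bm{x}_k$.

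Finally, I would solve the recursion $\Delta_k-\Delta_{k+1}\ge a\Delta_k^2$ with $a := c\bar\alpha M_1 p(\epsilon)(1-\epsilon)^2 d^2/(\mathcal{C}^2 n^2 R_0^2)$ and $\{\Delta_k\}$ nonincreasing and nonnegative. Dividing by $\Delta_k\Delta_{k+1}$ gives $\tfrac{1}{\Delta_{k+1}}-\tfrac{1}{\Delta_k}\ge a\tfrac{\Delta_k}{\Delta_{k+1}}\ge a$, so $\tfrac{1}{\Delta_k}\ge \tfrac{1}{\Delta_0}+ak$, which after substituting $\bar\alpha=\min(1,2\beta(1-c)/(mM_2L))$, recalling (as in the related works) that $\Delta_0\le \tfrac{L}{2}R_0^2$ via the descent lemma, and rearranging, yields exactly
\begin{equation}
\mathbb{E}[f(\bm{x}_k)-f^*]\ \le\ \frac{L\mathcal{C}^2 n^2 R_0^2}{2\mathcal{C}^2 n^2 + LcM_1 p(\epsilon)(1-\epsilon)^2 d^2 k\bar\alpha}.
\notag
\end{equation}
I expect the main obstacle to be the probabilistic lower bound on $|u_k^\top\nabla f(\bm{x}_k)|$ where $u_k$ is the \emph{normalized} vector $Q_kQ_k^\top\nabla f(\bm{x}_k)/\|Q_kQ_k^\top\nabla f(\bm{x}_k)\|$: one must control both the numerator $\nabla f(\bm{x}_k)^\top Q_kQ_k^\top\nabla f(\bm{x}_k)=\|Q_k^\top\nabla f(\bm{x}_k)\|^2\ge (1-\epsilon)d\|\nabla f(\bm{x}_k)\|^2$ (Lemma~\ref{rand_norm2}) from below and the denominator $\|Q_kQ_k^\top\nabla f(\bm{x}_k)\|\le \|Q_kQ_k^\top\|_{\mathrm{op}}\|\nabla f(\bm{x}_k)\|\le \mathcal{C} n\|\nabla f(\bm{x}_k)\|$ (Lemma~\ref{rand_norm}) from above on the same event, so that the ratio is at least $\tfrac{(1-\epsilon)d}{\mathcal{C}n}\|\nabla f(\bm{x}_k)\|$; this is presumably already done inside the proof of Theorem~\ref{nonconvex2}, so the convex part reduces to the bookkeeping of the recursion.
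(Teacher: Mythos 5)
Your proposal is correct and follows essentially the same route as the paper: the Armijo/eigenvalue-bound descent inequality combined with the probabilistic lower bound on $\|P_k^\top\nabla f(\bm{x}_k)\|$ from the nonconvex analysis, then convexity plus Jensen to get the recursion $\Delta_k-\Delta_{k+1}\ge a\Delta_k^2$, which is solved by telescoping $1/\Delta_k$. Your use of $\Delta_0\le \tfrac{L}{2}R_0^2$ via the descent lemma is in fact what reproduces the stated constant $2\mathcal{C}^2n^2$ in the denominator (the paper's own proof writes $\Delta_0\le LR_0^2$, which would yield $\mathcal{C}^2n^2$ instead), so no gap remains.
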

The above-bound parameters are the same as those in Theorem~\ref{nonconvex2}.

\begin{theorem}\label{PL2}
	Suppose that Assumption \ref{Lipdif2} is satisfied and $f$ satisfies the \textup{Polyak-Lojasiewicz condition}  i.e.
	\begin{equation}\label{PL}
		\forall\bm{x} \in \mathbb{R}^n,\qquad\|\nabla f(\bm{x})\|^2\geq2\sigma(f(\bm{x})-f^*)
	\end{equation}
        for the optimal value $f^*$.
	Then for any initial point $\bm{x}_0$, for all  $\epsilon \in (0,1)$ and for all $k\ge 0$, 
	\begin{equation}
		\begin{split}
			\mathbb{E}[f(\bm{x}_{k})-f^*] 
			&\leq\left(1-2\sigma c\bar{\alpha} M_1p(\epsilon)\left(\dfrac{(1-\epsilon)d}{\mathcal{C}n}\right)^2\right)^k(f(\bm{x}_0)-f^*).
		\end{split}
		\notag
	\end{equation}
\end{theorem}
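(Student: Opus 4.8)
The plan is to follow the standard template for linearly-convergent first-order methods under the Polyak--Łojasiewicz (PL) condition, but carried out on the subspace iterates. The starting point is the per-iteration decrease that must already be available from the proof of Theorem~\ref{nonconvex2}: combining Armijo's condition \eqref{linsearch}, the lower bound $\alpha_k\ge\bar\alpha$ from Theorem~\ref{stepsize2}, the lower bound $H_k\succeq M_1 I_m$ from \eqref{B_knorm}, and $\bm d_k=-H_k\nabla g_k(\bm 0)=-H_k P_k^\top\nabla f(\bm x_k)$, one gets
\begin{equation}
f(\bm x_k)-f(\bm x_{k+1})\ \ge\ c\,\bar\alpha\,M_1\,\bigl\|P_k^\top\nabla f(\bm x_k)\bigr\|^2 .
\notag
\end{equation}
First I would reduce $\|P_k^\top\nabla f(\bm x_k)\|^2$ to something involving $\|\nabla f(\bm x_k)\|^2$. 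Since $P_k$ contains (by construction \eqref{approxP_k}) the normalized column $Q_kQ_k^\top\nabla f(\bm x_k)/\|Q_kQ_k^\top\nabla f(\bm x_k)\|$, projecting onto that single column already gives $\|P_k^\top\nabla f(\bm x_k)\|^2\ge \|Q_kQ_k^\top\nabla f(\bm x_k)\|^2/\|Q_kQ_k^\top\nabla f(\bm x_k)\|^2\cdot(\cdot)$ — more precisely $\|P_k^\top\nabla f(\bm x_k)\|\ge |\nabla f(\bm x_k)^\top Q_kQ_k^\top\nabla f(\bm x_k)|/\|Q_kQ_k^\top\nabla f(\bm x_k)\|$. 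Bounding the numerator below by $(1-\epsilon)d\|\nabla f(\bm x_k)\|^2$ via Lemma~\ref{rand_norm2} and the denominator above by $\|Q_kQ_k^\top\|_{\mathrm{op}}\|\nabla f(\bm x_k)\|\le \mathcal C n\|\nabla f(\bm x_k)\|$ via Lemma~\ref{rand_norm}, one obtains, on the event where both \eqref{vectornorm} and \eqref{oparationnorm} hold (probability $p(\epsilon)$),
\begin{equation}
\bigl\|P_k^\top\nabla f(\bm x_k)\bigr\|^2\ \ge\ \Bigl(\tfrac{(1-\epsilon)d}{\mathcal C n}\Bigr)^2\|\nabla f(\bm x_k)\|^2 .
\notag
\end{equation}

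Next I would take conditional expectation over $Q_k$ given $\bm x_k$. On the complement event the decrease is still nonnegative (the Armijo step never increases $f$), so
\begin{equation}
\mathbb E\bigl[f(\bm x_k)-f(\bm x_{k+1})\mid \bm x_k\bigr]\ \ge\ c\bar\alpha M_1\,p(\epsilon)\Bigl(\tfrac{(1-\epsilon)d}{\mathcal C n}\Bigr)^2\|\nabla f(\bm x_k)\|^2 .
\notag
\end{equation}
Then invoke the PL inequality \eqref{PL}, $\|\nabla f(\bm x_k)\|^2\ge 2\sigma(f(\bm x_k)-f^*)$, subtract $f^*$ from both sides and rearrange to get the one-step contraction
\begin{equation}
\mathbb E\bigl[f(\bm x_{k+1})-f^*\mid\bm x_k\bigr]\ \le\ \Bigl(1-2\sigma c\bar\alpha M_1 p(\epsilon)\bigl(\tfrac{(1-\epsilon)d}{\mathcal C n}\bigr)^2\Bigr)\bigl(f(\bm x_k)-f^*\bigr).
\notag
\end{equation}
Taking total expectation and unrolling the recursion over $k$ steps yields the claimed bound.

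The routine parts are the algebra above; the one place that needs care — and which I'd flag as the main obstacle — is making the per-iteration descent inequality genuinely rigorous in the subspace quasi-Newton setting. Specifically, one must verify that $\nabla g_k(\bm 0)^\top \bm d_k = -\nabla f(\bm x_k)^\top P_k H_k P_k^\top\nabla f(\bm x_k)\le -M_1\|P_k^\top\nabla f(\bm x_k)\|^2$ (uses only $H_k\succeq M_1 I_m$, which \eqref{B_knorm} guarantees after Modify\_eig), and that Armijo termination with $\alpha_k\ge\bar\alpha$ then gives $f(\bm x_k)-f(\bm x_{k+1})\ge -c\alpha_k\nabla g_k(\bm 0)^\top\bm d_k\ge c\bar\alpha M_1\|P_k^\top\nabla f(\bm x_k)\|^2$. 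This is exactly the computation underlying Theorem~\ref{nonconvex2}, so in practice I would simply cite that intermediate step rather than redo it; the PL proof is then a two-line corollary. A secondary subtlety is the conditioning/measurability bookkeeping: $P_k$ depends on $Q_{k-m/2+1},\dots,Q_k$, so when taking $\mathbb E[\cdot\mid\bm x_k]$ one should be explicit that the descent bound holds for the realized $P_k$ and that the probability-$p(\epsilon)$ event refers to the single freshly-drawn $Q_k$ appearing in the lower bound for $\|P_k^\top\nabla f(\bm x_k)\|$ — which is legitimate because that column of $P_k$ is built from $Q_k$ at the current iterate.
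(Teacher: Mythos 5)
Your proposal is correct and follows essentially the same route as the paper's proof: the per-iteration decrease from Armijo plus $H_k\succeq M_1 I_m$, the lower bound on $\|P_k^\top\nabla f(\bm x_k)\|$ via the $Q_kQ_k^\top\nabla f(\bm x_k)$ column (the paper's inequality \eqref{P_knabla}), conditioning on the probability-$p(\epsilon)$ event with the trivial non-increase on its complement, and then the PL inequality to obtain the one-step contraction that is unrolled. The only cosmetic difference is that the paper applies \eqref{PL} before taking the conditional expectation rather than after, which yields the same bound.
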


From the above analysis, we can confirm that our algorithm still maintains, in expectation, the same order of the worst-case iteration complexities given, e.g., by \cite{Lcomdir},
though the computational complexity of our algorithm per iteration is reduced, thanks to the computation on the subspace. 

\subsection{Subspace Gradient Approximation by Finite Difference}\label{subsect:2}

Let us consider the modification to Algorithm \ref{alg7} assuming that we use finite difference to compute the gradients. As discussed in Introduction, AD may not work efficiently
for problems where even function-value evaluation is difficult.
First, we describe the changes to Algorithm \ref{alg7}, and then reprove the results of Section \ref{sec:exact_subgrad}. 
Let $\varepsilon>0$ be a fixed precision and let us denote by ${\nabla}_{Q,\varepsilon}f(\bm{x})$ the approximation of the gradient of $\tilde{g}_k(\bm{d})$, that is
\begin{equation}\label{finiteDiff_Q}
\forall i,\ 1\le i\le d,\quad {\nabla}_{Q,\varepsilon}f(\bm{x})_i= \dfrac{f(\bm{x}+\varepsilon Q\bm{e}_i)-f(\bm{x}-\varepsilon Q\bm{e}_i)}{2\varepsilon}.
\end{equation}
Notice that for all $\bm{x} \in \mathbb{R}^n$
\[\lim\limits_{\varepsilon \to 0} \nabla_{Q,\varepsilon}f(\bm{x})=Q^\top \nabla f(\bm{x}).\]
Similarly, we also define
\begin{equation}\label{finiteDiff_P}
\forall i,\ 1\le i\le m,\quad {\nabla}_{P,\varepsilon}f(\bm{x})_i= \dfrac{f(\bm{x}+\varepsilon P\bm{e}_i)-f(\bm{x}-\varepsilon P\bm{e}_i)}{2\varepsilon}.
\end{equation}

The descent direction $\bm{d}_k$, in line \ref{eq:algodescent} of Algorithm \ref{alg7}, is now replaced by
\begin{equation}\label{eq:descent2}
	\bm{d}_k=-H_k {\nabla}_{P_k,\varepsilon}f(\bm{x}_k),
\end{equation}
and the Armijo condition in line \ref{eq:algoarmijo} is now replaced by:
\begin{equation}\label{eq:armijo2}
	g(\bm{x}_k+\alpha_kP_k\bm{d}_k)\leq g_k(\bm{0})+c\alpha_k{\nabla}_{P_k,\varepsilon}f(\bm{x}_k)^{\top}\bm{d}_k.
\end{equation}
Furthermore the columns of $P_k$ in \eqref{approxP_k} now contain
\[\dfrac{Q_k{\nabla}_{Q_k,\varepsilon}f(\bm{x}_k)}{\|Q_k{\nabla}_{Q_k,\varepsilon}f(\bm{x}_k)\|}.\]
Then, the resulting algorithm is the modified Algorithm \ref{alg7} when the gradients are computed by finite difference (see Algorithm~\ref{alg8} for a summary of the modified Algorithm).

Let us consider the computational complexity of the modified Algorithm \ref{alg7}.
The gradients of the function in the subspaces can be computed in $O((d+m)n)$ by using
\eqref{finiteDiff_Q} and \eqref{finiteDiff_P}. 
The update of $H_k$, in \eqref{subBFGS}, needs $O(m^2)$ computations, 
and the eigenvalue calculation is in the order of $O(m^3)$. 
The current iterate $\bm{x}_k$ can be updated in $O(Amn)$ operations, where $A$ is the number of loops needed to satisfy Armijo's condition. From the above, we deduce that if $m \ll n$, modified Algorithm \ref{alg7} needs a total of $O(A(m+d)n)$. From Theorem \ref{stepsize2}, $A$ is in the order of $O(\log_{\beta}\bar{\alpha})$, that is, $O(\log m)$. Hence, the total complexity is $O(n(m+d)\log m)$.

Now we will show the following theorems for the modified Algorithm \ref{alg7} using approximate subspace gradient.
\begin{theorem}\label{nonconvex2b}
	Assume that $c<1/2$ and that Assumption \ref{Lipdif2} is satisfied. Then for any $\epsilon \in (0,1)$ and for any $\varepsilon>0$ and for any initial point $\bm{x}_0$, we have
	\begin{equation}\label{eq:nonconvex2b}
		\mathbb{E}\left[\min_{0\leq j\leq k}\|\nabla f(\bm{x}_j)\|\right] \le \sqrt{\dfrac{1}{k+1} \left(\frac{c\bar{\alpha} M_1 p(\epsilon)}{4} \left(\dfrac{(1-\epsilon)d}{3\mathcal{C}n}\right)^2 \right)^{-1}(f(\bm{x}_0)-f^*) + K^2\varepsilon^2},
	\end{equation}
	where $K:= \max\left(\dfrac{ 4 \sqrt{d}\mathcal{C}n}{1-\epsilon}, \dfrac{6\mathcal{C}nm^2}{d} \left(\dfrac{M_2}{M_1 c}+1 \right) \right)L$, and  
$\bar{\alpha}=\min\left(1,\dfrac{2\beta(1-2c)}{mM_2L}\right)$.
	If we decrease $\varepsilon$ at each iteration so that $\sum \varepsilon^2_k < \infty$, then we have that
	\[\mathbb{E}\left[\min_{0\leq j\leq k}\|\nabla f(\bm{x}_j)\|\right] \le \sqrt{\dfrac{1}{k+1} \left(\frac{c\bar{\alpha} M_1 p(\epsilon)}{4} \left(\dfrac{(1-\epsilon)d}{3\mathcal{C}n}\right)^2 \right)^{-1}(f(\bm{x}_0)-f^*) + \frac{K^2\sum\varepsilon_k^2}{k+1}}.\]
	Hence we deduce that in this case, $\mathbb{E}\left[\min_{0\leq j\leq k}\|\nabla f(\bm{x}_j)\|\right] =O\left(\sqrt{\frac{1}{k}}\right)$.
\end{theorem}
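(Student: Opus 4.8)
The plan is to mirror the proof of Theorem~\ref{nonconvex2} (the exact-gradient nonconvex case), tracking at each step the error introduced by replacing $Q_k^\top\nabla f(\bm{x}_k)$ and $P_k^\top\nabla f(\bm{x}_k)$ by their finite-difference surrogates $\nabla_{Q_k,\varepsilon}f(\bm{x}_k)$ and $\nabla_{P_k,\varepsilon}f(\bm{x}_k)$. First I would record the basic finite-difference error bound: under Assumption~\ref{Lipdif2}, $\bigl|\nabla_{P,\varepsilon}f(\bm{x})_i - (P^\top\nabla f(\bm{x}))_i\bigr| \le \tfrac{L}{2}\varepsilon\|P\bm{e}_i\|^2$, so that $\|\nabla_{P_k,\varepsilon}f(\bm{x}_k) - P_k^\top\nabla f(\bm{x}_k)\| \le \tfrac{L}{2}\varepsilon\sqrt{m}\,\|P_k\|_{\mathrm{op}}^2 \le \tfrac{L}{2}\varepsilon m^{3/2}$ using Lemma~\ref{P_knorm}, and similarly for $\nabla_{Q_k,\varepsilon}f(\bm{x}_k)$ with $\|Q_k\|_{\mathrm{op}}^2 \le \mathcal{C}n$ on the good event from Lemma~\ref{rand_norm}. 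This is what produces the two terms inside the $\max$ defining $K$: one term comes from the perturbation of $P_k$ itself (which enters through the approximate projected gradient used in the descent direction and the $P_k$-column construction), and the other from the error in $\nabla g_k(\bm{0})$ and $\nabla g_k(\alpha_k\bm{d}_k)$ feeding into the BFGS update and the Armijo test.

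Next I would redo the sufficient-decrease estimate. On the good event, the Armijo condition \eqref{eq:armijo2} together with the step-size lower bound (the modified $\bar{\alpha}$ with $1-2c$ in place of $1-c$, which is where the hypothesis $c<1/2$ is used — the extra factor accounts for the gradient error in the descent inequality) gives a per-iteration decrease of the form $f(\bm{x}_k) - f(\bm{x}_{k+1}) \gtrsim \bar{\alpha} M_1 \|\nabla_{P_k,\varepsilon}f(\bm{x}_k)\|^2$, modulo lower-order $\varepsilon$-corrections. Then I would pass from the approximate projected gradient back to $\|P_k^\top\nabla f(\bm{x}_k)\|$ using the triangle inequality and the finite-difference bound, and from there to $\|\nabla f(\bm{x}_k)\|$: since a normalized $\nabla f(\bm{x}_k)$-aligned direction (the randomly projected gradient column $Q_kQ_k^\top\nabla f(\bm{x}_k)/\|\cdot\|$, up to the $\varepsilon$-perturbed version) sits in the span of $P_k$, Lemma~\ref{rand_norm2} gives $\|P_k^\top\nabla f(\bm{x}_k)\| \ge \|\frac{1}{\sqrt d}Q_k^\top\nabla f(\bm{x}_k)\|^2/\|\nabla f(\bm{x}_k)\| \cdot(\text{const}) \gtrsim \frac{(1-\epsilon)d}{\mathcal{C}n}\|\nabla f(\bm{x}_k)\|$ on the good event, exactly as in the exact case but now with the $\varepsilon$-perturbed column, contributing another additive $O(\varepsilon)$ error. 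Collecting, on the good event $f(\bm{x}_k)-f(\bm{x}_{k+1}) \ge c_1 \bar\alpha M_1 \bigl(\tfrac{(1-\epsilon)d}{3\mathcal{C}n}\bigr)^2\|\nabla f(\bm{x}_k)\|^2 - c_2(K\varepsilon)^2$ for appropriate constants (the $3$ in the denominator and the $1/4$ absorbing the slack from the three triangle-inequality splits).

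Then I would take expectations conditioning on the good event, which has probability $p(\epsilon)$, note that on the complement $f$ is still nonincreasing along iterates (so decrease $\ge 0$), sum over $j=0,\dots,k$, telescope using $f(\bm{x}_0)-f^* \ge f(\bm{x}_0)-f(\bm{x}_{k+1})$, and bound $\min_{0\le j\le k}\|\nabla f(\bm{x}_j)\|^2 \le \frac{1}{k+1}\sum_{j=0}^k\|\nabla f(\bm{x}_j)\|^2$, finally applying Jensen $\mathbb{E}[\sqrt{\cdot}]\le\sqrt{\mathbb{E}[\cdot]}$ to obtain \eqref{eq:nonconvex2b}. For the second assertion, the constant error $K^2\varepsilon^2$ is simply replaced throughout by $K^2\varepsilon_k^2$ at iteration $k$, so after summing the telescoped inequality the aggregate error is $K^2\sum_{j\le k}\varepsilon_j^2$ divided by $(k+1)$; when $\sum\varepsilon_k^2<\infty$ this is $o(1)$, indeed $O(1/k)$, and combined with the leading $O(1/(k+1))$ term yields the stated $O(\sqrt{1/k})$ rate after taking the square root. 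The main obstacle I anticipate is bookkeeping the finite-difference errors consistently through the chain $\nabla_{P_k,\varepsilon}f \to P_k^\top\nabla f \to \nabla f$ while simultaneously handling the fact that $P_k$ (and hence $g_k$, $H_k$, $\bm{d}_k$, and the Armijo constant) is itself built from $\varepsilon$-perturbed quantities — getting the precise form of $K$ with the right powers of $m$, $n$, $d$ and the right dependence on $M_1,M_2,c$ requires care, and one must verify that the perturbed descent direction still yields a genuine sufficient decrease (this is exactly where $c<1/2$ and the modified $\bar\alpha$ are needed).
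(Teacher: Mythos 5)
Your plan follows the paper's proof essentially step for step: the finite-difference error bounds (Propositions \ref{prop:approxnorm1}--\ref{prop:approxnorm2}), the modified step-size bound with $1-2c$ requiring $c<1/2$ (Lemma \ref{stepsize4}), the lower bound $\|P_k^\top\nabla f(\bm{x}_k)\|\gtrsim \frac{(1-\epsilon)d}{3\mathcal{C}n}\|\nabla f(\bm{x}_k)\|$ via the perturbed $Q_k$-column (Lemma \ref{lem:aux}), conditioning on the good event of probability $p(\epsilon)$, telescoping, and Jensen. The only cosmetic difference is that the paper produces your additive $K^2\varepsilon^2$ term by an explicit dichotomy --- either $\|\nabla f(\bm{x}_k)\|\ge K\varepsilon$, in which case the clean decrease \eqref{decrease4} holds, or else $\|\nabla f(\bm{x}_k)\|^2\le K^2\varepsilon^2$ trivially --- rather than by carrying an additive correction through the decrease inequality, and you will need that dichotomy anyway since the step-size and $\|P_k^\top\nabla f\|$ lower bounds are only valid above the $K\varepsilon$ threshold.
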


\begin{theorem}\label{convex3}
	Assume that $c<1/2$ and that Assumption \ref{Lipdif2} is satisfied. Assume furthermore that $f$ is convex. and that $R_0$, defined as in Theorem \ref{convex2}, is finite.	Then, for any $\epsilon \in (0,1)$ and for any $\varepsilon>0$ and any initial point $\bm{x}_0$, we have
	\begin{equation}
		\mathbb{E}[f(\bm{x}_k)-f^*] 
		\leq\dfrac{L \mathcal{C}^2n^2R_0^2}{2\mathcal{C}^2n^2+L cM_1p(\epsilon)(1-\epsilon)^2d^2k\bar{\alpha}}+2R_0K\varepsilon.
		\notag
	\end{equation}
	If furthermore we decrease $\varepsilon$ at each iteration $k$ in such a way that $\varepsilon_k\le O(\frac{1}{k})$, then we deduce that $\mathbb{E}[f(\bm{x}_k)-f^*]=O\left(\frac{1}{k}\right)$. 
\end{theorem}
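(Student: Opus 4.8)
The plan is to mirror the proof of the exact‑gradient convex case (Theorem~\ref{convex2}) while carrying along the perturbation terms coming from the finite‑difference error, and then collapse those terms at the end exactly as in the nonconvex finite‑difference theorem (Theorem~\ref{nonconvex2b}). First I would record the basic finite‑difference estimate: under Assumption~\ref{Lipdif2} with $L$‑Lipschitz gradient, $f$ is $C^{1,1}$, so for each probe direction $P_k\bm e_i$ the symmetric difference quotient in \eqref{finiteDiff_P} satisfies $|{\nabla}_{P_k,\varepsilon}f(\bm x_k)_i - (P_k^\top\nabla f(\bm x_k))_i|\le \tfrac{L}{2}\varepsilon\|P_k\bm e_i\|^2 \le \tfrac{L}{2}\varepsilon$ (using $\|P_k\bm e_i\|\le 1$ since the columns of $P_k$ are normalized), hence $\|{\nabla}_{P_k,\varepsilon}f(\bm x_k) - P_k^\top\nabla f(\bm x_k)\|\le \tfrac{L}{2}\sqrt m\,\varepsilon$; the analogous bound for $Q_k$ gives $\|{\nabla}_{Q_k,\varepsilon}f(\bm x_k)-Q_k^\top\nabla f(\bm x_k)\|\le\tfrac L2\sqrt d\,\varepsilon$, which together with Lemma~\ref{rand_norm} controls how far the perturbed column $Q_k{\nabla}_{Q_k,\varepsilon}f(\bm x_k)$ of $P_k$ is from the exact one. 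These are presumably already assembled inside the proof of Theorem~\ref{nonconvex2b}, so I would cite that proof for the per‑iteration descent inequality: on the event that \eqref{vectornorm} and \eqref{oparationnorm} hold (probability $\ge p(\epsilon)$), the modified step produces
\[
 f(\bm x_{k+1}) \le f(\bm x_k) - \frac{c\bar\alpha M_1}{4}\Big(\frac{(1-\epsilon)d}{3\mathcal C n}\Big)^2\|\nabla f(\bm x_k)\|^2 + c_1\varepsilon\|\nabla f(\bm x_k)\| + c_2\varepsilon^2
\]
for suitable constants $c_1,c_2$ absorbed into $K$, and $f(\bm x_{k+1})\le f(\bm x_k)$ (up to an $O(\varepsilon)$ slack) on the complementary event.

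Next I would convert this into the convex recursion. Write $\Delta_k:=\mathbb E[f(\bm x_k)-f^*]$. Convexity plus the definition of $R_0$ in \eqref{R_0} gives, along the sublevel set, $f(\bm x_k)-f^*\le \|\nabla f(\bm x_k)\|\,R_0$, so $\|\nabla f(\bm x_k)\|\ge (f(\bm x_k)-f^*)/R_0$. Taking expectations in the descent inequality, lower‑bounding the gradient‑norm term via this inequality and keeping the linear error term, I get
\[
 \Delta_{k+1} \le \Delta_k - \frac{a}{R_0^2}\,\Delta_k^2 + c_1\varepsilon\,\mathbb E\|\nabla f(\bm x_k)\|,
\]
where $a:=\tfrac{p(\epsilon)c\bar\alpha M_1}{4}\big(\tfrac{(1-\epsilon)d}{3\mathcal C n}\big)^2$ (I'll need to check the constant bookkeeping matches the $LcM_1p(\epsilon)(1-\epsilon)^2d^2$ form in the statement — the factor $\tfrac{1}{2\mathcal C^2 n^2 + \cdots}$ form is just the closed form of the resulting recursion, as in Theorem~\ref{convex2}). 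For the residual $\mathbb E\|\nabla f(\bm x_k)\|$ I would use $L$‑smoothness and boundedness of the sublevel set (finiteness of $R_0$) to bound it by a constant times $R_0$ or by $\sqrt{2L\Delta_k}$; either way it folds into the constant $2R_0K$ advertised in the statement after absorbing $\varepsilon\cdot(\text{const})$ into $2R_0K\varepsilon$. Then the standard lemma for recursions of the form $\Delta_{k+1}\le \Delta_k - b\Delta_k^2 + e$ — first telescoping the homogeneous part $\Delta_{k+1}\le\Delta_k-b\Delta_k^2$ to get $\Delta_k\le \tfrac{1}{b k + 1/\Delta_0}$, i.e. the $\tfrac{L\mathcal C^2n^2R_0^2}{2\mathcal C^2n^2+LcM_1p(\epsilon)(1-\epsilon)^2d^2k\bar\alpha}$ term, and then adding back the constant perturbation to produce the additive $2R_0K\varepsilon$ — yields the claimed bound. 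For the final assertion, if $\varepsilon_k\le O(1/k)$ then the per‑step error $e_k=2R_0K\varepsilon_k\le O(1/k)$; I would argue (e.g. by an induction comparing $\Delta_k$ to $C/k$, or by noting $\sum_{j\le k}e_j = O(\log k)$ spread over the $1/k$ envelope) that the accumulated perturbation does not dominate the $O(1/k)$ rate of the homogeneous recursion, giving $\Delta_k=O(1/k)$.

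The main obstacle I expect is the perturbation of the subspace matrix $P_k$ itself: unlike the gradient‑evaluation error, which enters additively and is easy to bound, the finite‑difference error changes which subspace we project onto (through the normalized column $Q_k{\nabla}_{Q_k,\varepsilon}f/\|Q_k{\nabla}_{Q_k,\varepsilon}f\|$), so the key geometric fact used in the exact analysis — that $P_k$ contains (a normalized copy of) $Q_kQ_k^\top\nabla f(\bm x_k)$, hence $\|P_k^\top\nabla f(\bm x_k)\|$ is bounded below in terms of $\|Q_k^\top\nabla f(\bm x_k)\|^2/(\mathcal C n)$ — must be re‑established with an error term, and one must ensure the denominator $\|Q_k{\nabla}_{Q_k,\varepsilon}f(\bm x_k)\|$ does not collapse. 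This is exactly the place where the hypothesis $c<1/2$ and the precise constant $K$ (with its $M_2/(M_1c)+1$ factor) come from, and it is handled in the proof of Theorem~\ref{nonconvex2b}; for Theorem~\ref{convex3} I would invoke that machinery verbatim and only redo the convex‑recursion bookkeeping described above. A minor secondary point is making the "on the bad event, $f$ still essentially decreases" step rigorous with the $O(\varepsilon)$ slack, so that the full expectation (not just the conditional one on the good event) satisfies the recursion; this is routine given the descent inequality holds deterministically up to the $\varepsilon$‑terms.
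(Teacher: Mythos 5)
Your overall plan coincides with the paper's: import the finite-difference descent inequality from the proof of Theorem~\ref{nonconvex2b}, convert it into a recursion for $\Delta_k=\mathbb{E}[f(\bm{x}_k)-f^*]$ via convexity and $R_0$ exactly as in Theorem~\ref{convex2}, and recover the $O(1/k)$ rate plus an additive $O(\varepsilon)$ term; your remarks about the perturbed column of $P_k$ and the role of $c<1/2$ and $K$ are also consistent with how Lemma~\ref{lem:aux} and Lemma~\ref{stepsize4} are used. The one step that, taken literally, would not deliver the stated bound is your treatment of the perturbation. You posit a per-iteration error of the form $c_1\varepsilon\,\mathbb{E}\|\nabla f(\bm{x}_k)\|+c_2\varepsilon^2$, bound $\mathbb{E}\|\nabla f(\bm{x}_k)\|$ by a constant, and invoke a generic lemma for $\Delta_{k+1}\le\Delta_k-b\Delta_k^2+e$; with $e$ linear in $\varepsilon$ the equilibrium of that recursion is $\sqrt{e/b}=O(\sqrt{\varepsilon})$, so you would only obtain an additive $O(\sqrt{\varepsilon})$ term rather than $2R_0K\varepsilon$. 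Moreover, ``telescoping the homogeneous part and then adding back the constant perturbation'' is not a valid step on its own: added naively, the perturbation accumulates to $O(ke)$. The paper sidesteps both issues. First, the actual inequality \eqref{eq:ab} carries only the quadratic error $K^2\varepsilon^2$ (no linear-in-$\varepsilon$ term), because all $\varepsilon$-errors were already absorbed multiplicatively under the condition $\|\nabla f(\bm{x}_k)\|\ge K\varepsilon$. Second, it performs a threshold case split: if $f(\bm{x}_k)-f^*\ge 2R_0K\varepsilon$, then $\mathbb{E}\|\nabla f(\bm{x}_k)\|^2\ge 4K^2\varepsilon^2$, so $\mathbb{E}\|\nabla f(\bm{x}_k)\|^2-K^2\varepsilon^2\ge\frac{3}{4}\mathbb{E}\|\nabla f(\bm{x}_k)\|^2$ and the recursion becomes homogeneous (with constant degraded by $3/4$) and telescopes as in Theorem~\ref{convex2}; if the threshold fails, the additive term $2R_0K\varepsilon$ covers the bound outright. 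If you import \eqref{eq:ab} verbatim and replace your ``add back the perturbation'' step by this case split, your argument becomes the paper's; as written, it has a quantitative gap in the additive error term.
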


\section{Numerical Experiment}\label{experiment}
We compare the proposed method numerically with other methods for different values of $m$ and $d$. 
All experiments were done with Python 3.10 on an Intel(R) Xeon(R) CPU E5-2695 v4 @ 2.10GHz with a Tesla V100-SXM2-16GB GPU and 503GB of RAM.

For numerical reasons, a modification is made to the proposed method;
Notice that the update in \eqref{subBFGS} requires that $\bm{s}_k^{\top}\bm{y}_k>0$, but to avoid numerical instability when $\|\bm{s}_k\|$ or $\|\bm{y}_k\|$ is too small, 
we modify the proposed method by setting $H_{k+1}=I_m$ when  $\bm{s}_k^{\top}\bm{y}_k< \epsilon$, which does not affect Theorem~\ref{method}. 
The initial matrix $P_0 \in \mathbb{R}^{n \times m}$ of Algorithm \ref{alg7} is build as follows: the first $m-2$ columns are the canonical basis vectors $\{\bm{e}_i\}_{1\le i\le m-2}$ of $\mathbb{R}^n$. The last two columns are the following vectors $\dfrac{\bm{x}_0}{\|\bm{x}_0\|}$ and $\dfrac{Q_0Q_0^{\top} \nabla f(\bm{x}_0)}{\|Q_0Q_0^{\top}\nabla f(\bm{x}_0)\|}$, where $Q_0$ is a random Gaussian matrix.

In all the experiments the precision $\varepsilon$ used to approximate the gradients is fixed to $\varepsilon=0.0001$. 
In addition, for the existing second order method~\cite{R-subspace,Comdir}, we computed the subspace Hessian matrix $P_k^\top \nabla ^2f(x_k)P_k$ using Hessian-vector product, i.e.,
$(P_k^\top \nabla^2 f(x)P_k)_{ij} = p_i^\top \mathrm{hvp}(x,p_j,f)$,
where $\mathrm{hvp}(x,p,f):= \nabla^2 f(x)p$. Notice that only $m$ Hessian-vector products are needed to compute the subspace Hessian.
We computed both directional derivatives and Hessian-vector products using automatic differentiation (jax package in Python). 
Let us consider the following problem:
\paragraph{$l_2$-regularized neural network}
We use the MNIST datasets, \cite{mnist}.
This problem is formulated as 
\begin{equation}
	\min_{\bm{w} \in \mathbb{R}^n}\frac{1}{N}\sum_{i=1}^N\mathcal{L}(\mathcal{M}(w,x_i),y_i)+\lambda\|\bm{w}\|^2,
	\notag
\end{equation}
where $(x_i,y_i)$ denotes the MNIST dataset ($N = 30000$), $\mathcal{L}$ denotes the Cross Entropy Loss function, and $\mathcal{M}$ denotes either 1) the three-layer linear neural network (LNN) with $n=669706$ parameters or 2) convolutional neural network (CNN) with $n=33738$ parameters. In the LNN case, we use Algorithm \ref{alg7} with exact gradient. In the CNN case, we use both exact gradient computation and finite difference gradient (Algorithm \ref{alg8}) for our method. Here, we present the results of the finite difference case. The exact case is presented in Figure \ref{fig:realworld_data_ad} in Appendix~\ref{sec:add_numerics}.
In Section \ref{proposeonly}, we investigate the effect of the dimensions $d$ and $m$ on the performances of our algorithm using the LNN model. In Section \ref{comparepeform}, we compare our method to state-of-the-art methods using the CNN. 
We set $\lambda = 10^{-4}$ in this numerical experiment.


\subsection{Comparison between proposed methods}\label{proposeonly}
We present the results of comparing the proposed method, Algorithm \ref{alg7}, for different values of $m$.

The parameters of Algorithm \ref{alg7}  used in it are defined as follows: $	M_1=0.01,~M_2=1000,~\beta=0.8,c=0.3~$. 
The numerical results under these conditions are shown in Figure~\ref{fig:compare proposed method parameters}.

\begin{figure}[tb]
	\begin{minipage}{0.5\hsize}  
		\centering
		\includegraphics[width=2.8 in]{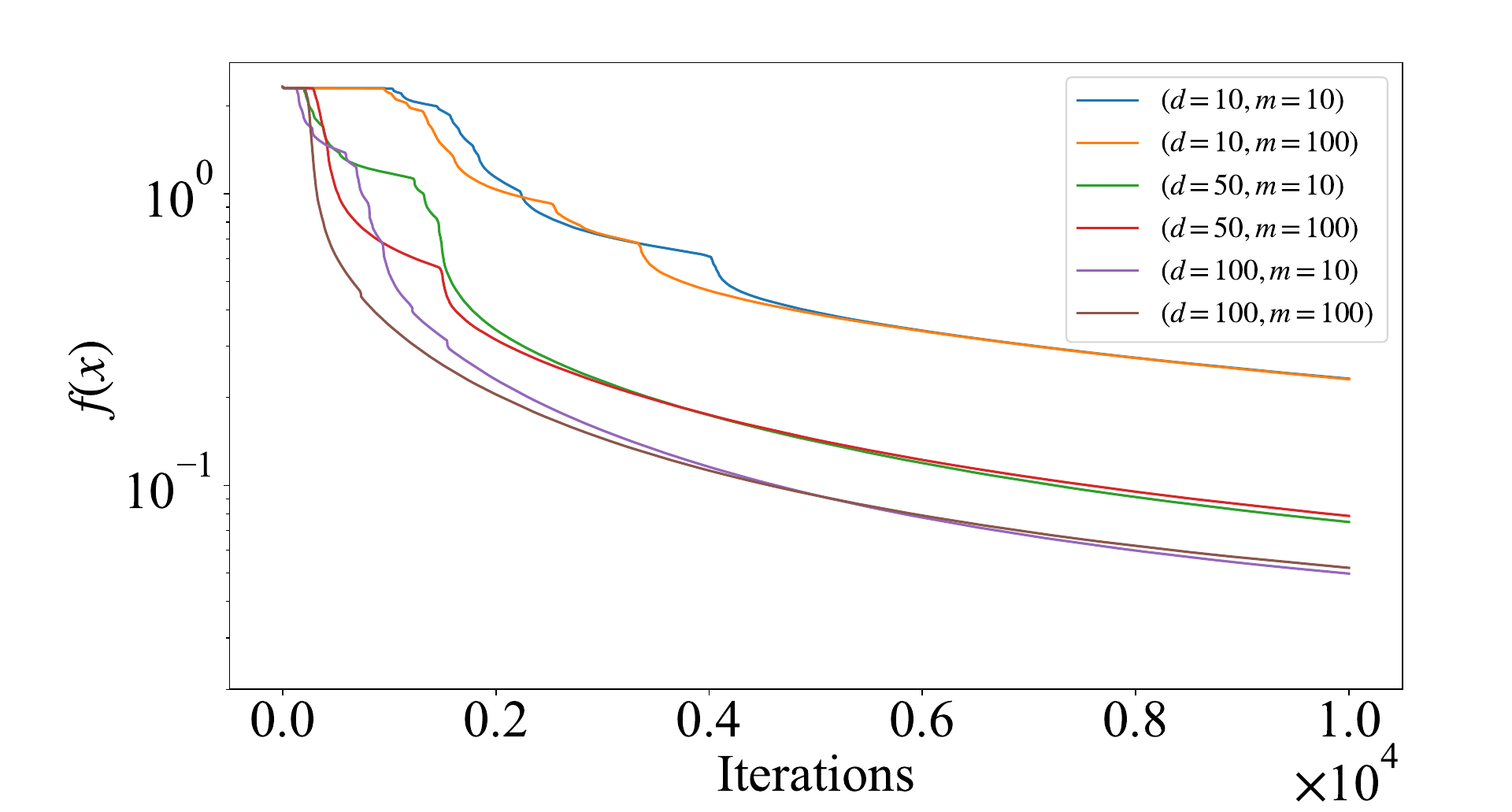}
		\subcaption{Iterations vs. function value}
		\label{fig:compare proposed method parameters(iteration)}
	\end{minipage}
	\begin{minipage}{0.5\hsize}  
		\centering
		\includegraphics[width=2.8 in]{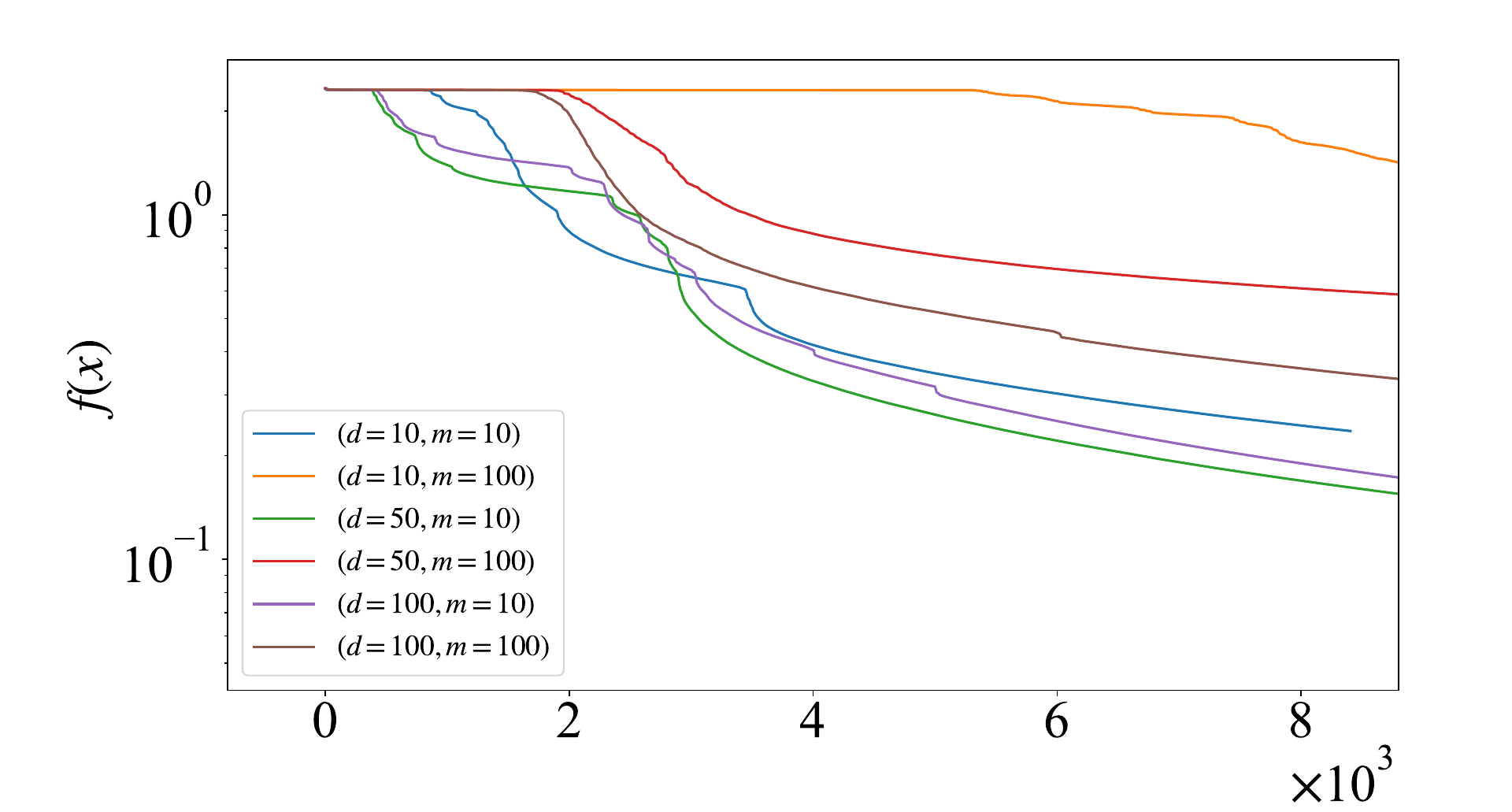}
		\subcaption{Time vs. function value}
		\label{fig:compare proposed method parameters(time)}
	\end{minipage}
	\caption{Results of the proposed method (average for 10 runs) for LNN.
		For the readability, this figure shows only the average, but the plots showing (the average $\pm$ the standard deviation for 10 runs) are shown in Figure~\ref{fig:parameter_m_d} in supplementary material. The performance of all 10 runs for each $m$ and $d$ is also shown in Figure~\ref{fig:parameter_m_d_allrun}.}
	\label{fig:compare proposed method parameters}
\end{figure}
The experiments confirm that Algorithm \ref{alg7} converges faster for larger value of $d$ in terms of iteration, which is indeed in agreement with \eqref{eq:nonconvex2} and \eqref{eq:nonconvex2b}.
However, it can be noticed that the algorithm performs better for smaller values of $m$.
This is because our method estimates three gradients per iteration. Hence we need to compute $2m + d$  directional derivatives: $\nabla \tilde{g}_k(\bm{0})$, $\nabla g_k(\bm{0})$ and $\nabla g_k(\alpha_k\bm{d}_k)$. 
Although the convergence rates in Theorems~\ref{nonconvex2}-\ref{PL2} (e.g., as implied from \eqref{decrease2}) depend on the ratio $\frac{d}{n}$,
Figure~\ref{fig:compare proposed method parameters(iteration)} indicates almost no difference in the performance. A small value of $m$ has a more significant impact on the speed-up of the algorithm than the $\frac{d}{n}$.

\subsection{Comparison of the proposed method with previous studies}\label{comparepeform}
In this section, we compare our methods with existing algorithms shown below: \\
\textbf{Proposed}: proposed method (Algorithm \ref{alg7} or the modified one: Algorithm \ref{alg8}) \\
\textbf{GD}: The steepest descent method using Armijo linear search \\ 
\textbf{AGD}: Accelerated gradient descent method using Armijo linear search 
and restart. \\
\textbf{LMN}: Algorithm of \cite{Comdir} using \eqref{P_kBFGS} for $P_k$ \\
\textbf{SGD}: Subspace gradient descent of \cite{kozak2021stochastic} using Gaussian random matrices. \\
\textbf{RSRNM}: Randomized subspace regularized Newton method in \cite{R-subspace}

The existing random subspace BFGS, \cite{R-BFGS}, is not considered for comparison because the matrix-to-matrix products cannot be performed,  due to memory limitation, for the dimension $n$ considered.
Indeed, the method needs to store an $n\times n$ matrix, which is impracticable when $n$ is too large.

The comparison results are shown in Figure \ref{fig:realworld_data} (for more details, see Figures~\ref{fig:errorbar_mlpnet} and \ref{fig:errorbar_cnn} in supplementary material).
In Figure~\ref{fig:result_convolutional_neural_network}, the existing second-order method did not work because Hessian-vector products cause memory issues.
Hence, we compared the proposed method and the existing first-order methods.

\begin{figure}[tb]
	\begin{minipage}[t]{0.5\hsize}
		\centering
		\includegraphics[width=3.0 in]{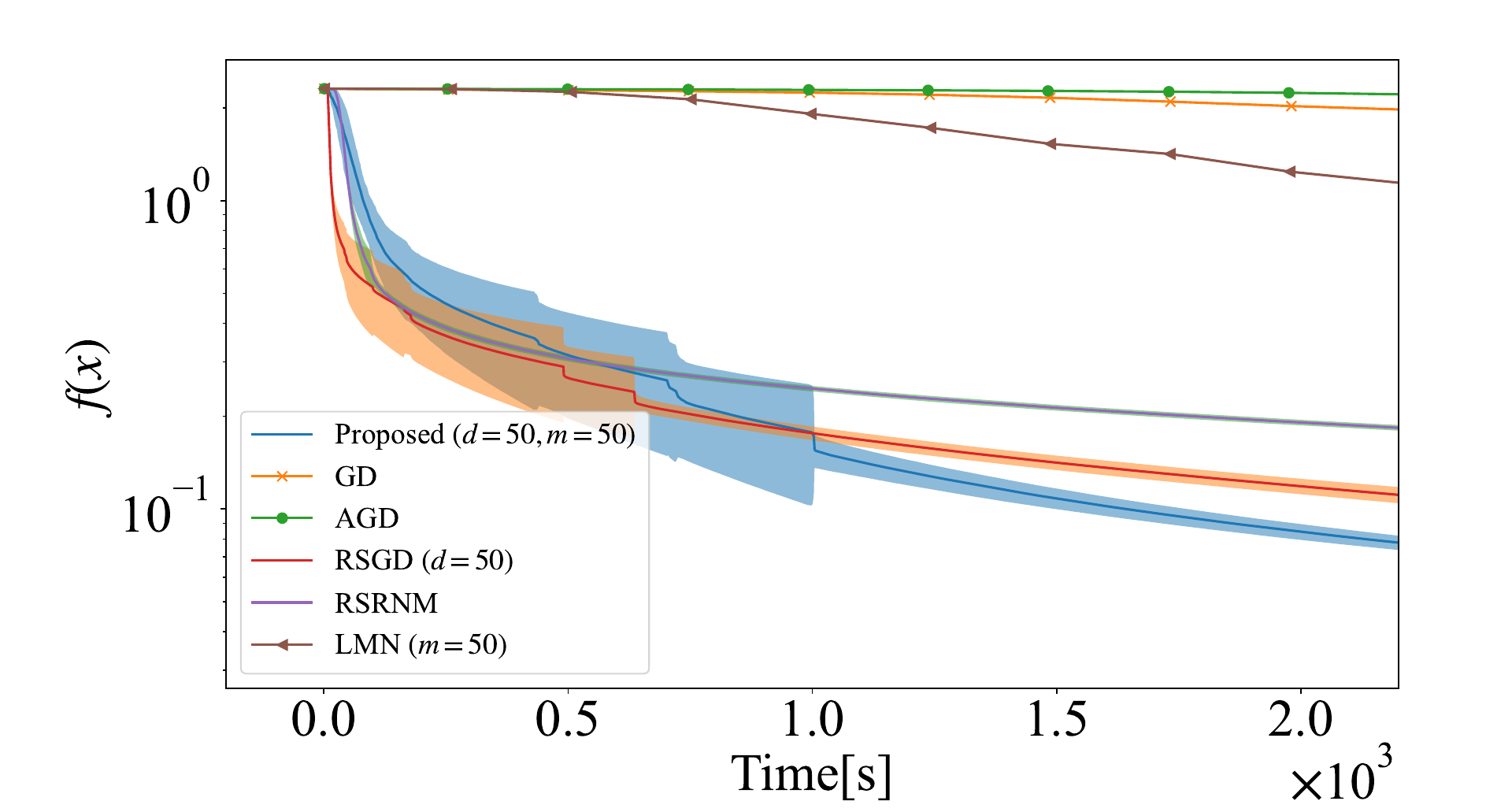}
		\subcaption{LNN}
		\label{fig:result_linear_neural_network}
	\end{minipage} 
	\begin{minipage}[t]{0.5\hsize}
		\centering
		\includegraphics[width=3.0 in]{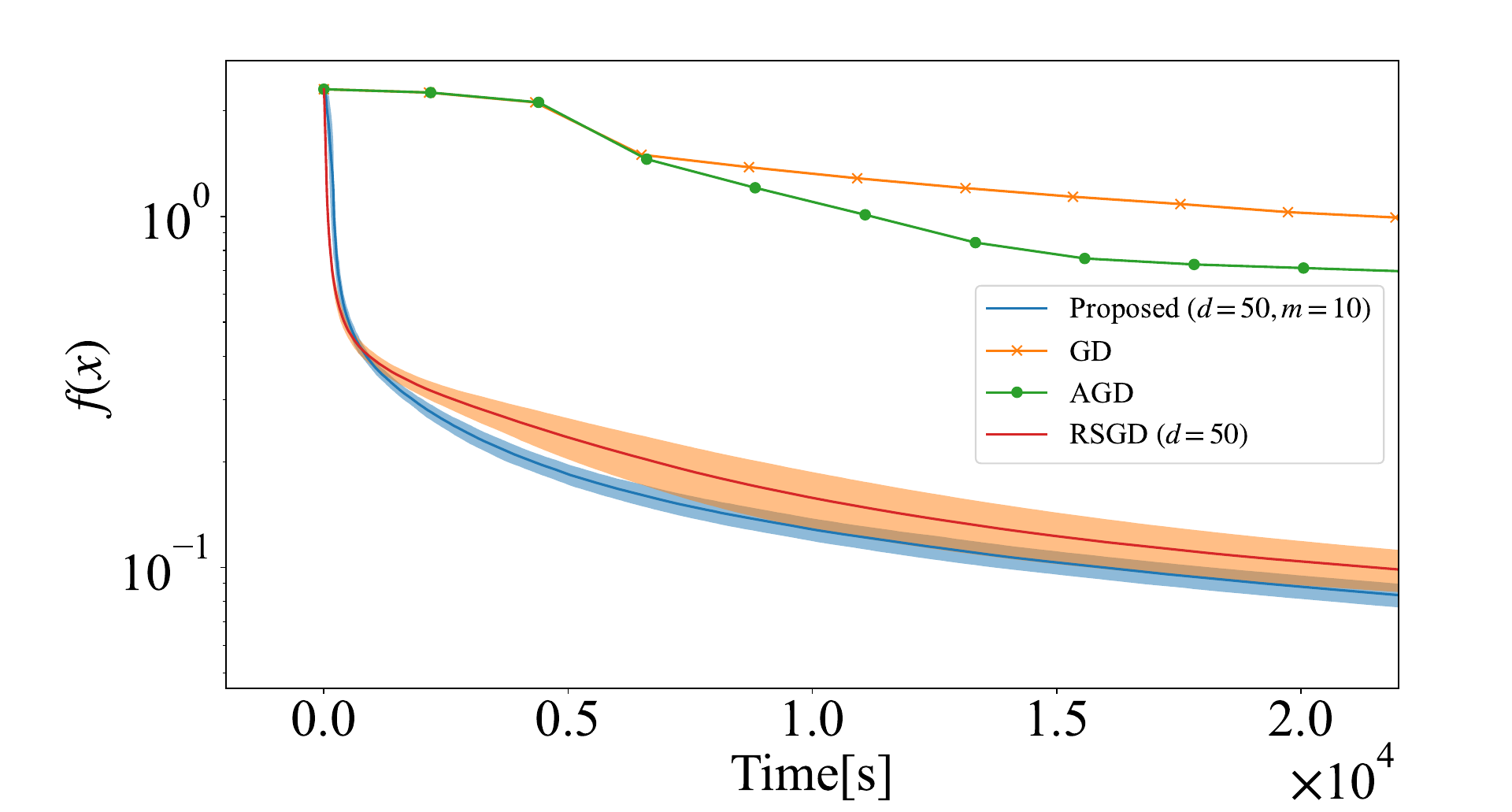}
		\subcaption{CNN with finite difference gradients}
		\label{fig:result_convolutional_neural_network}
	\end{minipage} 
	\caption{The average $\pm$ the standard deviation for LNN (10 runs) and CNN (5 runs).}
	\label{fig:realworld_data}
\end{figure}

The proposed method performs well over time for all functions, especially in terms of function values. 
The worst-case iteration complexity is comparable to that of other methods, confirming that the advantage of reducing the computational complexity per iteration 
is fully exploited. Fast function-value reduction is achieved for both convex and nonconvex functions, and the method is useful for a wide range of functions. 
\section{Conclusion} \label{conclusion}
 We proposed a novel quasi-Newton algorithm that significantly reduces the computational complexity per iteration due to the computation on the subspace, 
while maintaining the same order of the worst-case iteration complexities in expectation, compared to the previous works.

\paragraph{Limitation and future work}
Nevertheless, much of the computation time is spent on 
the computation of the following subspace gradients: $\nabla \tilde{g}_k(\bm{0})=Q_k^\top \nabla f(\bm{x}_k)$, $\nabla g_k(\bm{0})=P_k^\top \nabla f(\bm{x}_k)$ and $\nabla g_k(\alpha_k\bm{d}_k)$. We would like to reduce the amount of calculation for subspace gradients.
As discussed in Section~\ref{prepare}, a quite recent paper \cite{jin2024globalconvBFGS} proved the iteration complexity without modifying the matrix of the BFGS formula by adopting the exact line search rule. We would like to consider the subspace variant of the method while the exact line search is not so practical.
Another limitation in our paper would be that the function is assumed to be smooth ($f \in C^2$). It would be interesting, in some future work, to extend this approach to a non-smooth setting.


\bibliographystyle{plain}
\bibliography{ref}

\newpage
\appendix


\section{Proposed Algorithm using Finite-difference Computation for Gradient Vectors}
In this section, we summarized the modifications done to the matrix $P_k$ and to Algorithm \ref{alg7} using the notation of Section \ref{subsect:2}.
\begin{equation}\label{approxP_kbis}
	P_k=\left[\dfrac{\bm{x}_{k-\frac{m}{2}+1}}{\|\bm{x}_{k-\frac{m}{2}+1}\|},\dfrac{Q_{k-\frac{m}{2}+1}{\nabla}_{Q_{k-\frac{m}{2}+1},\varepsilon}f(\bm{x}_{k-\frac{m}{2}+1})}{\|Q_{k-\frac{m}{2}+1}{\nabla}_{Q_{k-\frac{m}{2}+1},\varepsilon}f(\bm{x}_{k-\frac{m}{2}+1})\|},\dots,\dfrac{\bm{x}_k}{\|\bm{x}_k\|},\dfrac{Q_k{\nabla}_{Q_k,\varepsilon}f(\bm{x}_k)}{\|Q_k{\nabla}_{Q_k,\varepsilon}f(\bm{x}_k)\|}\right]
\end{equation}
The modified variant of Algorithm \ref{alg7} is described as follows.
\begin{algorithm}[hbt!]
	\caption{Subspace Quasi-Newton Method with Randomly Approximated Gradient computed with finite difference}
	\label{alg8}
	Input: $\bm{x}_0$,  $P_0,\ M_1I_m\preceq H_0\preceq M_2I_m$, $\beta,c\in(0,\frac{1}{2})$
	
	parameters: $M_1,M_2>0$, $\varepsilon$ .
	\begin{algorithmic}[1]
		\For {$k=0,1,\dots$}
		\State Sample  $(Q_k)_{ij}\ \sim\ N(0,1)\qquad(i=1,2,\dots,n\quad j=1,2,\dots,d)$.
		
		\State Compute  ${\nabla}_{Q_k,\varepsilon}f(\bm{x}_k)$ using \eqref{finiteDiff_Q}.
		\State Set $P_{k}$ by using \eqref{approxP_kbis} with $\bm{x}_{k}$ and $Q_k {\nabla}_{Q_k,\varepsilon}f(\bm{x}_k)$, and construct $g_k$ as in \eqref{subspaceprob}.
		\State $\bm{d}_k\ \leftarrow\ -H_k{\nabla}_{P_k,\varepsilon}f(\bm{x}_k)$ 
		\For{$i=0,1,\dots$}  
		\State $\alpha_k\ \leftarrow\ \beta^i$
		\If {$f(\bm{x}_k+\alpha_kP_k\bm{d}_k)\leq g_k(\bm{0})+c\alpha_k{\nabla}_{P_k,\varepsilon}f(\bm{x}_k)^{\top}\bm{d}_k$} 
		\State break
		\EndIf
		\EndFor               
		\State $\bm{x}_{k+1}\ \leftarrow\   \bm{x}_k+\alpha_kP_k\bm{d}_k$
		\State $\tilde{H}_{k+1}\ \leftarrow\ \textup{BFGS}(H_k,\alpha_k\bm{d}_k,{\nabla}_{P_k,\varepsilon}f(\bm{x}_k+\alpha_k \bm{d}_k)-{\nabla}_{P_k,\varepsilon}f(\bm{x}_k))$
		\State $H_{k+1}\ \leftarrow\ $Modify\_eig$(\tilde{H}_{k+1})$
		\EndFor 
	\end{algorithmic}
\end{algorithm}

\section{Proofs}\label{sec:allproof}
 Let us define the event $\mathcal{A}(\epsilon)$ such that both Lemmas \ref{rand_norm2} and \ref{rand_norm} hold.
\subsection{Proof of Theorem \ref{stepsize2}} \label{sec:stepsize_proof}
	By definition of $H_k$, we have that $\bm{d}_k=-H_k P_k^{\top}\nabla f(\bm{x}_k)$ and $H_k \preceq M_2I_m$  (by \eqref{B_knorm}). Therefore, 
	\begin{equation}\label{d_kgrad}
		-(P_k\bm{d}_k)^{\top}\nabla f(\bm{x}_k)=-\bm{d}_k^{\top}P_k^{\top}\nabla f(\bm{x}_k)=\bm{d}_k^{\top}H_k^{-1}\bm{d}_k\geq\dfrac{1}{M_2}\|\bm{d}_k\|^2.
	\end{equation}
	From Lemma \ref{P_knorm}, we have
	\begin{equation}\label{P_kd_k}
		\|\bm{d}_k\|^2\geq\dfrac{1}{m}\|P_k\bm{d}_k\|^2.
	\end{equation}
	Hence, we obtain
	\begin{equation}
		-(P_k\bm{d}_k)^{\top}\nabla f(\bm{x}_k)\overset{\eqref{d_kgrad}}{\geq}\dfrac{1}{M_2}\|\bm{d}_k\|^2\overset{\eqref{P_kd_k}}{\geq}\dfrac{1}{mM_2}\|P_k\bm{d}_k\|^2,
		\notag
	\end{equation}
	which leads to 
	\begin{equation}\label{d_kgrad2}
		\|P_k\bm{d}_k\|^2\leq -mM_2(P_k\bm{d}_k)^{\top}\nabla f(\bm{x}_k).
	\end{equation}
	Here, from \cite[Lemma 1.2.3]{Lecture}, the following holds from Assumption \ref{Lipdif2}.
	\begin{equation}
		f(\bm{x}+\bm{y})\leq f(\bm{x})+\nabla f(\bm{x})^{\top}\bm{y}+\dfrac{L}{2}\|\bm{y}\|^2\ (\forall \bm{x},\bm{y}\in\mathbb{R}^n).
		\notag
	\end{equation}
	Using this and \eqref{d_kgrad2}, we have that
	\begin{equation}
		\begin{split}
			f(\bm{x}_k+\alpha_k P_k\bm{d}_k)&\leq f(\bm{x}_k)+\alpha_k\nabla f(\bm{x}_k)^{\top}P_k\bm{d}_k+\alpha_k^2\dfrac{L}{2}\|P_k\bm{ d}_k\|^2\\
			&\overset{\eqref{d_kgrad2}}{\leq} f(\bm{x}_k)+\alpha_k\nabla f(\bm{x}_k)^{\top}P_k\bm{d}_k\left(1-\dfrac{mM_2L\alpha_k}{2} \right).
		\end{split}
		\notag
	\end{equation}
	Therefore, since $
	\nabla f(\bm{x}_k)^{\top}P_k\bm{d}_k=\nabla g_k(\bm{0})^{\top}\bm{d}_k <0
	$, we see that if
	\begin{equation}
		\left(1-\dfrac{mM_2L\alpha_k}{2}\right)\geq c, 
		\notag
	\end{equation}
	i.e., $\alpha_k\leq\dfrac{2(1-c)}{mM_2L}$ holds, 
	{\eqref{linsearch}} is satisfied. We conclude the proof by noticing $0<\beta<1$.

\subsection{Proof of Lemma \ref{P_knorm}}
\begin{proof}
	Since the operator norm of the matrix is equal to the maximum singular value of the matrix and transposing the matrix does not change the singular values except for \textup{0}, we have that $\|P\|=\|P_k^{\top}\|$. If $\|\bm{v}\|=1$, then we have 
	\begin{equation}
		|(P_k^{\top}\bm{v})_i|\leq1\Rightarrow\|P_k^{\top}\bm{v}\|\leq \sqrt{m},
		\notag
	\end{equation}
	since  each column of $P_k$ is normalized. 
\end{proof}

\subsection{Proofs for Exact Subspace Gradient Computation}

\textbf{Proof of Theorem \ref{nonconvex2}}
\begin{proof}
	From the expression \eqref{B_knorm} and \eqref{linsearch}, we obtain
	\begin{equation}\label{decrease}
		\begin{split}
			f(\bm{x}_{k+1})-f(\bm{x}_k)&\overset{\eqref{linsearch}}{\leq} -c\alpha_k\nabla f(\bm{x}_k)^{\top}P_k H_k P_k^{\top}\nabla f(\bm{x}_k)\\
			&\overset{\eqref{B_knorm}}{\leq} -c\alpha_kM_1\|P_k^{\top}\nabla f(\bm{x}_k)\|^2.
		\end{split}
	\end{equation}
	Since the columns of $P_k$ contain
	$\dfrac{Q_kQ_k^{\top}\nabla f(\bm{x}_k)}{\|Q_kQ_k^{\top}\nabla f(\bm{x}_k)\|}$,
	where $Q_k$ is a random matrix, we have that 
	\begin{equation}\label{P_knabla}
		\begin{split}
			\|P_k^{\top}\nabla f(\bm{x}_k)\|^2&\geq \left(\nabla f(\bm{x}_k)^{\top}\dfrac{Q_kQ_k^{\top}\nabla f(\bm{x}_k)}{\|Q_kQ_k^{\top}\nabla f(\bm{x}_k)\|}\right)^2\\
			&\geq\left(\dfrac{\|Q_k^{\top}\nabla f(\bm{x}_k)\|^2}{\|Q_kQ_k^{\top}\|\|\nabla f(\bm{x}_k)\|}\right)^2\\
			&\overset{\eqref{vectornorm},\eqref{oparationnorm}}{\geq}\left(\dfrac{(1-\epsilon)\|\nabla f(\bm{x}_k)\|^2}{\mathcal{C}\dfrac{n}{d}\|\nabla f(\bm{x}_k)\|}\right)^2\\
			&=\left(\dfrac{(1-\epsilon)d}{\mathcal{C}n}\right)^2\|\nabla f(\bm{x}_k)\|^2 
		\end{split}
	\end{equation}
	is satisfied with probability at least $p(\epsilon)$ (c.f. \eqref{peps}). Let $\mathcal{A}$ be the event corresponding to \eqref{P_knabla}. By the above, we have that $\mathbb{P}(\mathcal{A})\ge p(\epsilon)$.
	Hence, when $\omega \in \mathcal{A}$ (here $\omega$ denotes the randomness associated with $Q_k$), from \eqref{decrease} we have
	\begin{equation}\label{decrease3}
		f(\bm{x}_{k+1})-f(\bm{x}_k)\leq-c\alpha_kM_1\left(\dfrac{(1-\epsilon)d}{\mathcal{C}n}\right)^2\|\nabla f(\bm{x}_k)\|^2.
	\end{equation}
	Notice that when $\omega \notin \mathcal{A}$ $f(\bm{x}_{k+1})-f(\bm{x}_k)\leq 0$ holds by linear search, and therefore, given $\bm{x}_k$, taking the expectation for $f(\bm{x}_{k+1})-f(\bm{x}_k)$, conditionally to $\mathcal{A}$, leads to
	\begin{equation}
		\begin{split}
			\mathbb{E}[f(\bm{x}_{k+1})-f(\bm{x}_k)|\bm{x}_k]&\leq -c\alpha_kM_1\left(\dfrac{(1-\epsilon)d}{\mathcal{C}n}\right)^2\|\nabla f(\bm{x}_k)\|^2\times p(\epsilon)+0\times(1-p(\epsilon))\\
			&\overset{\mbox{\small Theorem }\ref{stepsize2}}{\leq} -c\bar{\alpha} M_1\left(\dfrac{(1-\epsilon)d}{\mathcal{C}n}\right)^2 p(\epsilon) \|\nabla f(\bm{x}_k)\|^2.
		\end{split}  
		\notag
	\end{equation}
	Taking expectation regarding 
	$\bm{x}_k$ for the above, 
	\begin{equation}\label{decrease2}
		\mathbb{E}[f(\bm{x}_{k+1})-f(\bm{x}_k)]\leq-c\bar{\alpha} M_1\left(\dfrac{(1-\epsilon)d}{\mathcal{C}n}\right)^2p(\epsilon)\mathbb{E }[\|\nabla f(\bm{x}_k)\|^2].
	\end{equation}
	Adding these equations for $k=0,1,\dots$, we obtain 
	\begin{equation}
		\sum_{j=0}^{k}c\bar{\alpha} M_1\left(\dfrac{(1-\epsilon)d}{\mathcal{C}n}\right)^2p(\epsilon)\mathbb{E}[\|\nabla f(\bm{x}_j)\|^2]\leq \mathbb{E}[f(\bm{x}_0)-f(\bm{x}_k)]\leq f(\bm{x}_0)-f^*,
		\notag
	\end{equation}
	which is rewritten as
	\begin{equation}
		\sum_{j=0}^{k}\mathbb{E}[\|\nabla f(\bm{x}_j)\|^2]\leq \left(c\bar{\alpha} M_1\left(\dfrac{(1-\epsilon)d}{\mathcal{C}n }\right)^2p(\epsilon)\right)^{-1}(f(\bm{x}_0)-f^*).
		\notag
	\end{equation}
	From
	\begin{equation}\label{gradsquare}
		\begin{split}
			\mathbb{E}\left[\min_{0\leq j\leq k}\|\nabla f(\bm{x}_k)\|^2\right]&\leq\dfrac{1}{k+1}\sum_{j=0}^{k}\mathbb{E}[\|\nabla f(\bm{x}_k)\|^2], \\
		\end{split}
	\end{equation}
	and by Jensen's inequality, we deduce that 
	\[  \mathbb{E}\left[\sqrt{\min_{0\leq j\leq k}\|\nabla f(\bm{x}_j)\|^2}\right] \le \sqrt{\mathbb{E}\left[\min_{0\leq j\leq k}\|\nabla f(\bm{x}_j)\|^2\right]} \le \sqrt{\frac{1}{k+1} \left(c\bar{\alpha} M_1\left(\dfrac{(1-\epsilon)d}{\mathcal{C}n }\right)^2p(\epsilon)\right)^{-1}(f(\bm{x}_0)-f^*)} \]
	which concludes the proof noticing that 
	\[ \mathbb{E}\left[\sqrt{\min_{0\leq j\leq k}\|\nabla f(\bm{x}_j)\|^2}\right] = \mathbb{E}\left[\min_{0\leq j\leq k}\|\nabla f(\bm{x}_j)\|\right].\] Also from \cite[Proposition 3.4]{convergence}, we have that
	$\mathbb{E}[\|\nabla f(\bm{x}_k)\|]$ converges to $0$.
\end{proof}

\textbf{Proof of Theorem \ref{convex2}}
\begin{proof}
	From the convexity of $f$ and by definition of $R_0$, we have, by \eqref{R_0}, that for any $\bm{x}\in\mathbb{R}^n$ and any optimal solution $\bm{x}^*$: 
	\begin{equation}
		f(\bm{x})-f^*\leq\nabla f(\bm{x})^\top(\bm{x}-\bm{x}^*)\leq\|\nabla f(\bm{x})\| \|\bm{x}-\bm{x}^*\|\leq\|\nabla f(\bm{x})\|R_0,
		\notag
	\end{equation}
	that is, 
	\begin{equation}\label{f(x)}
		\|\nabla f(\bm{x})\|^2\geq\dfrac{(f(\bm{x})-f^*)^2}{R_0^2}.
	\end{equation}
	Let us now define
	\begin{equation}
		\Delta_k:=\mathbb{E}[f(\bm{x}_k)-f^*].
		\notag
	\end{equation}
	From Jensen's inequality, it follows that
	\begin{equation}
		\mathbb{E}[(f(\bm{x}_k)-f^*)^2]\geq\Delta_k^2.
		\notag
	\end{equation}
	Setting $\bm{x}=\bm{x}_k$ and taking the expectation of both sides of \eqref{f(x)}, we obtain
	\begin{equation}\label{minusnabla}
		\mathbb{E}[\|\nabla f(\bm{x}_k)\|^2]\geq\dfrac{\mathbb{E}[(f(\bm{x}_k)-f^*)^2]}{R_0^2}\geq\dfrac{\Delta_k^2}{R_0^2}
	\end{equation}
	Rewriting \eqref{decrease2} using $\Delta_k$, 
	we obtain
	\begin{eqnarray*}
		\Delta_{k+1} & \leq & \Delta_k-c\bar{\alpha} M_1\left(\dfrac{(1-\epsilon)d}{\mathcal{C}n}\right)^2p(\epsilon)\mathbb{E}[\|\nabla f(\bm{x}_k )\|^2] \\
		& \overset{\eqref{minusnabla}}{\leq} & \Delta_k-c\bar{\alpha} M_1p(\epsilon)\left(\dfrac{(1-\epsilon)d}{\mathcal{C}nR_0}\right)^2\Delta_k^2
		\notag
	\end{eqnarray*}
	Dividing this equation by both sides $\Delta_k\Delta_{k+1}(>0)$ and using the fact that $\Delta_k$ is monotonically decreasing with respect to $k$ and therefore $\dfrac{\Delta_k}{\Delta_{k+1}}\geq 1$, 
	\begin{equation}
		\dfrac{1}{\Delta_k}\leq\dfrac{1}{\Delta_{k+1}}-c\bar{\alpha} M_1p(\epsilon)\left(\dfrac{(1-\epsilon)d}{\mathcal{C}nR_0}\right)^2\dfrac{\Delta_k }{\Delta_{k+1}}\leq\dfrac{1}{\Delta_{k+1}}-c\bar{\alpha} M_1p(\epsilon)\left(\dfrac{(1-\epsilon)d}{\mathcal{C}nR_0}\right)^2.
		\notag
	\end{equation}
	By using this with $k=0,1,\dots$, we get
	\begin{equation}
		\dfrac{1}{\Delta_0}\leq\dfrac{1}{\Delta_{k+1}}-cM_1p(\epsilon)\left(\dfrac{(1-\epsilon)d}{\mathcal{C}nR_0}\right)^2 (k+1)\bar{\alpha} 
		\notag
	\end{equation}
	By writing this, 
	\begin{equation}\label{Delta}
		\begin{split}
			\Delta_{k+1}\leq 
			\dfrac{\Delta_0\mathcal{C}^2n^2R_0^2}{\mathcal{C}^2n^2R_0^2+\Delta_0cM_1p(\epsilon)(1-\epsilon)^2d^2(k+1)\bar{\alpha }}.
		\end{split}
	\end{equation}
	Now, from the definition of $\Delta_k$, \eqref{Lipdif} and \eqref{R_0} together with the convexity of $f$ and
	$\nabla f(\bm{x}^*)=\bm{0}$, we have
	\begin{equation}\label{Delta_0}
		\Delta_0=f(\bm{x}_0)-f^*\leq L \|\bm{x}_0-\tilde{\bm{x}}\|^2 \leq L R_0^2,
	\end{equation}
	where
	\begin{equation}
		\tilde{\bm{x}}:=\argmin_{\bm{x}^* \in \Omega}\|\bm{x}_0-\bm{x}^*\|.
		\notag
	\end{equation}
	Since the term in the middle of inequalities \eqref{Delta} is monotonically increasing with respect to $\Delta_0$,
	\eqref{Delta} together with  \eqref{Delta_0} leads to
	\begin{equation}
		\Delta_{k}\leq 
		\dfrac{L \mathcal{C}^2n^2R_0^2}{\mathcal{C}^2n^2+L cM_1p(\epsilon)(1-\epsilon)^2d^2k\bar{\alpha}},
		\notag
	\end{equation}
	which completes the proof. 
\end{proof}

\textbf{Proof of Theorem \ref{PL2}}
\begin{proof}
	Using \eqref{decrease3} and \eqref{PL}, we obtain
	\begin{equation}
		\begin{split}
			f(\bm{x}_{k+1})-f^*&\leq(f(\bm{x}_k)-f^*)-c\alpha_kM_1\left(\dfrac{(1-\epsilon)d}{\mathcal{C}n}\right)^2\|\nabla f(\bm{x}_k)\|^2\\
			&\overset{\eqref{PL}}{\leq}\left(1-2\sigma c\alpha_kM_1\left(\dfrac{(1-\epsilon)d}{\mathcal{C}n}\right)^2\right)(f(\bm{x}_k)-f^*)\\
			&\overset{\mbox{Theorem } \ref{stepsize2}}{\leq}\left(1-2\sigma c\bar{\alpha}M_1\left(\dfrac{(1-\epsilon)d}{\mathcal{C}n}\right)^2\right)(f(\bm{x}_k)-f^*)    
		\end{split}
		\notag
	\end{equation}
	with probability at least $p(\epsilon)$. Let us denote by $\mathcal{A}$ the event on which the above inequality holds. Since in any case $f(\bm{x}_{k+1})-f^*\leq f(\bm{x}_{k})-f^*$, we have that, by taking the conditional expectation with respect to $x_k$: 
	\begin{equation}
		\begin{split}
			\mathbb{E}[f(\bm{x}_{k+1})-f^*|\bm{x}_k]&\leq \left(p(\epsilon)\left(1-2\sigma c\bar{\alpha}M_1\left(\dfrac{(1-\epsilon)d}{\mathcal{C}n}\right)^2\right)+(1-p(\epsilon))\right)(f(\bm{x}_k)-f^*)\\
			&=\left(1-2\sigma c\bar{\alpha}M_1p(\epsilon)\left(\dfrac{(1-\epsilon)d}{\mathcal{C}n}\right)^2\right)(f(\bm{x}_k)-f^*).    
		\end{split}
		\notag
	\end{equation}
	By taking the expectation, with respect to $\bm{x}_k$, in both sides of the above inequality we obtain
	\begin{equation}
		\mathbb{E}[f(\bm{x}_{k+1})-f^*]=\left(1-2\sigma c\bar{\alpha}M_1p(\epsilon)\left(\dfrac{(1-\epsilon)d}{\mathcal{C}n}\right)^2\right)\mathbb{E }[f(\bm{x}_k)-f^*].
		\notag
	\end{equation}
	Since $2\sigma c\bar{\alpha}M_1\left(\dfrac{(1-\epsilon)d}{\mathcal{C}n}\right)^2>0$,
	the coefficient of $\mathbb{E}[f(\bm{x}_k)-f^*]$ is less than 1. Therefore, $\mathbb{E}[f(\bm{x}_k)-f^*]$ converges  linearly to $0$.
\end{proof}

\subsection{Proofs for Subspace Gradient Approximation by Finite Difference}
We have the following approximation result:
\begin{proposition}\label{prop:approxnorm1} Under Assumption~\ref{Lipdif2}
	$$\|\nabla_{Q,\varepsilon}f(\bm{x})-Q^\top \nabla f(\bm{x})\| \le 2d \|Q\|^2L \varepsilon .$$
\end{proposition}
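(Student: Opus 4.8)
The plan is to prove the bound coordinate by coordinate and then convert to the Euclidean norm. Since $\|\bm{v}\|\le\sqrt{d}\,\max_{1\le i\le d}|\bm{v}_i|$ for any $\bm{v}\in\mathbb{R}^d$, it suffices to show that for each $i$ the scalar $\bigl|\nabla_{Q,\varepsilon}f(\bm{x})_i-(Q^\top\nabla f(\bm{x}))_i\bigr|$ is at most $\tfrac{L}{2}\|Q\|^2\varepsilon$; combining the $d$ such terms in the $\ell_2$ sense then gives a bound of $\tfrac{\sqrt d}{2}L\|Q\|^2\varepsilon\le 2d\,L\|Q\|^2\varepsilon$, which is the claimed (deliberately loose) inequality.

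For the coordinate-wise estimate I would use the quadratic upper bound quoted earlier from \cite[Lemma 1.2.3]{Lecture}, which holds under Assumption~\ref{Lipdif2}, applied at $\bm{x}$ with the two increments $\pm\varepsilon Q\bm{e}_i$:
$$\bigl|f(\bm{x}\pm\varepsilon Q\bm{e}_i)-f(\bm{x})\mp\varepsilon\,\nabla f(\bm{x})^\top Q\bm{e}_i\bigr|\le\frac{L}{2}\varepsilon^2\|Q\bm{e}_i\|^2.$$
Subtracting the ``$-$'' inequality from the ``$+$'' one, the first-order terms add up to $2\varepsilon\,\nabla f(\bm{x})^\top Q\bm{e}_i=2\varepsilon\,(Q^\top\nabla f(\bm{x}))_i$ while the two remainders contribute at most $L\varepsilon^2\|Q\bm{e}_i\|^2$ by the triangle inequality; dividing by $2\varepsilon$ yields $\bigl|\nabla_{Q,\varepsilon}f(\bm{x})_i-(Q^\top\nabla f(\bm{x}))_i\bigr|\le\tfrac{L}{2}\varepsilon\|Q\bm{e}_i\|^2$. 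Finally I would bound $\|Q\bm{e}_i\|\le\|Q\|_{\mathrm{op}}\|\bm{e}_i\|=\|Q\|_{\mathrm{op}}$, so each coordinate error is at most $\tfrac{L}{2}\varepsilon\|Q\|^2$, and the norm conversion from the first paragraph closes the argument. An equivalent route is to write $f(\bm{x}+\varepsilon Q\bm{e}_i)-f(\bm{x}-\varepsilon Q\bm{e}_i)=\int_{-\varepsilon}^{\varepsilon}\nabla f(\bm{x}+tQ\bm{e}_i)^\top Q\bm{e}_i\,dt$ and apply the Lipschitz bound $\|\nabla f(\bm{x}+tQ\bm{e}_i)-\nabla f(\bm{x})\|\le L|t|\|Q\bm{e}_i\|$, which gives the same constant; I would keep whichever statement is cleaner.

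As for the main obstacle: there is essentially none of substance — this is a routine central-difference error estimate. The only point to keep in mind is that the proposition is stated for a \emph{fixed} matrix $Q$, so no concentration or expectation argument is needed here; the randomness of $Q_k$ only enters later, when this proposition is combined with Lemmas~\ref{rand_norm2} and \ref{rand_norm} to control $\|Q_k\|$. Because the target constant $2d$ is far from tight, any of the above arguments is more than enough.
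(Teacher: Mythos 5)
Your proof is correct and follows essentially the same route as the paper: a coordinate-wise central-difference error estimate derived from the Lipschitz gradient (the paper uses the mean value theorem where you use the quadratic upper bound, but both yield a per-coordinate error of order $L\varepsilon\|Q\|^2$), followed by conversion to the Euclidean norm, with ample slack in the final constant $2d$. No gaps.
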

\begin{proof}
	Let $1\le i\le d $. By a Taylor expansion of $f$ around $\bm{x}_k-\epsilon Q \bm{e}_i$, we have that there exists $\tilde{\bm{x}}\in [\bm{x}-\varepsilon Q\bm{e}_i,\bm{x}+\varepsilon Q\bm{e}_i]$ such that 
	$$f(\bm{x}+\varepsilon Q\bm{e}_i)-f(\bm{x}-\varepsilon Q\bm{e}_i)=2\varepsilon (Q\bm{e}_i)^\top \nabla f(\tilde{\bm{x}}). $$
	Hence, by Assumption~\ref{Lipdif2}, we have that
	$$\left|{\nabla}_{Q,\varepsilon}f(\bm{x})_i - (Q^\top \nabla f(\bm{x}))^\top \bm{e}_i \right| \le 2\varepsilon L \|Q\|^2. $$
	We deduce therefore that
	$$ \left\|{\nabla}_{Q,\varepsilon}f(\bm{x}) - Q^\top \nabla f(\bm{x}) \right\| \le  2d \|Q\|^2L \varepsilon .$$
	
\end{proof}
\begin{proposition}\label{prop:approxnorm2} Under Assumption~\ref{Lipdif2}
	$$\|\nabla_{P,\varepsilon}f(\bm{x})-P^\top \nabla f(\bm{x})\| \le 2m^2L \varepsilon .$$
\end{proposition}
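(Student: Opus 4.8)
The plan is to mirror the proof of Proposition~\ref{prop:approxnorm1} almost verbatim, replacing the random matrix $Q$ by the deterministic matrix $P_k$, and then to exploit the fact that $P_k$ has normalized columns so that $\|P_k\|$ can be bounded explicitly (by Lemma~\ref{P_knorm} we have $\|P_k\|_{\textup{op}} \le \sqrt{m}$, hence $\|P_k\|^2 \le m$). First I would fix an index $i$ with $1 \le i \le m$ and apply the mean value form of Taylor's expansion to the one-variable function $t \mapsto f(\bm{x} + t P\bm{e}_i)$ on the interval $[-\varepsilon, \varepsilon]$: this yields a point $\tilde{\bm{x}} \in [\bm{x} - \varepsilon P\bm{e}_i, \bm{x} + \varepsilon P\bm{e}_i]$ with
\[
f(\bm{x} + \varepsilon P\bm{e}_i) - f(\bm{x} - \varepsilon P\bm{e}_i) = 2\varepsilon (P\bm{e}_i)^\top \nabla f(\tilde{\bm{x}}).
\]
Dividing by $2\varepsilon$ gives ${\nabla}_{P,\varepsilon}f(\bm{x})_i = (P\bm{e}_i)^\top \nabla f(\tilde{\bm{x}})$, while the exact quantity is $(P^\top \nabla f(\bm{x}))_i = (P\bm{e}_i)^\top \nabla f(\bm{x})$.

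Next I would estimate the discrepancy in a single coordinate using Cauchy--Schwarz and Assumption~\ref{Lipdif2}:
\[
\left| {\nabla}_{P,\varepsilon}f(\bm{x})_i - (P^\top \nabla f(\bm{x}))_i \right|
= \left| (P\bm{e}_i)^\top (\nabla f(\tilde{\bm{x}}) - \nabla f(\bm{x})) \right|
\le \|P\bm{e}_i\| \cdot L \|\tilde{\bm{x}} - \bm{x}\|.
\]
Since $\|P\bm{e}_i\| = 1$ (column $i$ of $P$ is normalized) and $\|\tilde{\bm{x}} - \bm{x}\| \le \varepsilon \|P\bm{e}_i\| = \varepsilon$, each coordinate error is at most $L\varepsilon$. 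Actually, to match the stated constant $2m^2 L\varepsilon$ without needing the normalization refinement, I can also just bound $\|P\bm{e}_i\| \le \|P\| \le \sqrt{m}$ and $\|\tilde{\bm{x}}-\bm{x}\| \le \varepsilon\|P\bm{e}_i\| \le \varepsilon\sqrt{m}$, which gives a per-coordinate bound of $m L \varepsilon$ (or even $2mL\varepsilon$ if one keeps the factor $2$ from the proof of Proposition~\ref{prop:approxnorm1}); this is the analogue of the $2\varepsilon L\|Q\|^2$ estimate there with $\|Q\|^2$ replaced by $m$.

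Finally I would pass from the coordinatewise bound to the Euclidean norm bound: with $m$ coordinates each bounded by $2mL\varepsilon$ we get $\|{\nabla}_{P,\varepsilon}f(\bm{x}) - P^\top \nabla f(\bm{x})\| \le \sqrt{m} \cdot 2mL\varepsilon = 2m^{3/2}L\varepsilon \le 2m^2 L\varepsilon$ for $m \ge 1$; alternatively, the cruder bound $\|\cdot\| \le \sum_i |\cdot|_i \le m \cdot 2mL\varepsilon$ is not needed but also suffices, and in fact using $\|P\bm{e}_i\|=1$ directly one obtains the cleaner $\|{\nabla}_{P,\varepsilon}f(\bm{x}) - P^\top \nabla f(\bm{x})\| \le \sqrt{m}\, L\varepsilon$, comfortably inside $2m^2L\varepsilon$. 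There is essentially no obstacle here: the only thing to be careful about is which norm bound on the columns of $P_k$ one uses ($\|P_k\bm{e}_i\|=1$ versus $\|P_k\|\le\sqrt m$), and one just needs to pick a combination that lands below the claimed $2m^2L\varepsilon$, which every reasonable choice does. The proof is a direct transcription of the $Q$-case with the role of $\|Q\|^2$ played by the column-norm bound for $P_k$.
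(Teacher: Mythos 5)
Your proof is correct and follows essentially the same route as the paper, which simply states that the argument is the proof of Proposition~\ref{prop:approxnorm1} transcribed with $Q$ replaced by $P$ and $\|Q\|^2$ replaced by the bound $\|P\|^2\le m$ from Lemma~\ref{P_knorm}, yielding $2m\cdot m\,L\varepsilon = 2m^2L\varepsilon$. Your additional observation that using $\|P\bm{e}_i\|=1$ gives the sharper bound $\sqrt{m}\,L\varepsilon$ is a valid refinement but not needed for the stated constant.
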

\begin{proof}
	The proof is the same as the one of the previous proposition; we only use the fact that 
	$\|P\|\le \sqrt{m}$, by Lemma~\ref{P_knorm}.
\end{proof}

\begin{lem}\label{lem:aux}
	Assume that $\|\nabla f(\bm{x}_k)\|\ge\frac{ 4 \sqrt{d}\|Q_k\|^2L \varepsilon}{1-\epsilon}$. Then	we have that
	$$	\|P_k^\top \nabla f(\bm{x}_k)\| \ge \dfrac{d(1-\epsilon)}{3\mathcal{C}n} \|\nabla f(x_k)\| $$
	holds with probability at least $p(\epsilon)$.
\end{lem}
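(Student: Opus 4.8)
The plan is to mimic the chain of inequalities \eqref{P_knabla} from the proof of Theorem~\ref{nonconvex2}, replacing the exact column $Q_kQ_k^{\top}\nabla f(\bm{x}_k)/\|Q_kQ_k^{\top}\nabla f(\bm{x}_k)\|$ of $P_k$ by its finite-difference counterpart $Q_k\nabla_{Q_k,\varepsilon}f(\bm{x}_k)/\|Q_k\nabla_{Q_k,\varepsilon}f(\bm{x}_k)\|$ appearing in \eqref{approxP_kbis}, and absorbing the discrepancy using Proposition~\ref{prop:approxnorm1}. Throughout I would work on the event $\mathcal{A}(\epsilon)$ on which Lemmas~\ref{rand_norm2} and~\ref{rand_norm} hold simultaneously; this event has probability at least $p(\epsilon)$. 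Write $\bm{u}:=Q_k^{\top}\nabla f(\bm{x}_k)$ and $\bm{v}:=\nabla_{Q_k,\varepsilon}f(\bm{x}_k)$, so that on $\mathcal{A}(\epsilon)$ one has $\|\bm{u}\|^2\ge(1-\epsilon)d\|\nabla f(\bm{x}_k)\|^2$, $\|Q_k\|^2=\|Q_kQ_k^{\top}\|_{\mathrm{op}}\le\mathcal{C}n$, and, from Proposition~\ref{prop:approxnorm1}, $\|\bm{v}-\bm{u}\|\le 2d\|Q_k\|^2L\varepsilon$.

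The first step is to turn the hypothesis $\|\nabla f(\bm{x}_k)\|\ge 4\sqrt{d}\,\|Q_k\|^2L\varepsilon/(1-\epsilon)$ into the cleaner estimate $\|\bm{v}-\bm{u}\|\le\tfrac12\|\bm{u}\|$. Indeed, plugging the hypothesis into Proposition~\ref{prop:approxnorm1} gives $\|\bm{v}-\bm{u}\|\le\tfrac{\sqrt{d}}{2}(1-\epsilon)\|\nabla f(\bm{x}_k)\|$, and comparing with $\|\bm{u}\|\ge\sqrt{(1-\epsilon)d}\,\|\nabla f(\bm{x}_k)\|$ yields $\|\bm{v}-\bm{u}\|\le\tfrac{\sqrt{1-\epsilon}}{2}\|\bm{u}\|\le\tfrac12\|\bm{u}\|$. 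This ``$\bm{v}$ stays within $50\%$ of $\bm{u}$'' bound is the engine of the whole proof; as a byproduct it guarantees $|\bm{u}^{\top}\bm{v}|>0$, so $Q_k\bm{v}\neq\bm{0}$ and the corresponding column of $P_k$ in \eqref{approxP_kbis} is genuinely the normalized vector and not replaced by $0$.

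Next I would estimate $\|P_k^{\top}\nabla f(\bm{x}_k)\|$ from below by the magnitude of the inner product of $\nabla f(\bm{x}_k)$ with that normalized column, exactly as in \eqref{P_knabla}: $\|P_k^{\top}\nabla f(\bm{x}_k)\|\ge|\nabla f(\bm{x}_k)^{\top}Q_k\bm{v}|/\|Q_k\bm{v}\|=|\bm{u}^{\top}\bm{v}|/\|Q_k\bm{v}\|$. For the numerator, expand $\bm{u}^{\top}\bm{v}=\|\bm{u}\|^2+\bm{u}^{\top}(\bm{v}-\bm{u})$ and apply Cauchy--Schwarz with the $\tfrac12$-bound to get $|\bm{u}^{\top}\bm{v}|\ge\tfrac12\|\bm{u}\|^2\ge\tfrac12(1-\epsilon)d\|\nabla f(\bm{x}_k)\|^2$. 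For the denominator, expand $Q_k\bm{v}=Q_kQ_k^{\top}\nabla f(\bm{x}_k)+Q_k(\bm{v}-\bm{u})$ and bound $\|Q_k\bm{v}\|\le\|Q_kQ_k^{\top}\|_{\mathrm{op}}\|\nabla f(\bm{x}_k)\|+\|Q_k\|\|\bm{v}-\bm{u}\|\le\mathcal{C}n\|\nabla f(\bm{x}_k)\|+\tfrac12\|Q_k\|^2\|\nabla f(\bm{x}_k)\|\le\tfrac32\mathcal{C}n\|\nabla f(\bm{x}_k)\|$, using $\|\bm{u}\|\le\|Q_k\|\|\nabla f(\bm{x}_k)\|$, the $\tfrac12$-bound, and $\|Q_k\|^2\le\mathcal{C}n$. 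Dividing, one power of $\|\nabla f(\bm{x}_k)\|$ cancels and the constants collapse to exactly $\tfrac{(1-\epsilon)d}{3\mathcal{C}n}$, which is the claimed bound on $\mathcal{A}(\epsilon)$; since $\mathbb{P}(\mathcal{A}(\epsilon))\ge p(\epsilon)$ the lemma follows.

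I do not anticipate a real obstacle here, since the argument is a perturbed version of \eqref{P_knabla}. The only point requiring care is constant bookkeeping — making sure the factor $4$ in the hypothesis propagates through the two triangle-inequality splittings to land on exactly the factor $3$ (rather than a larger constant) in the conclusion — together with the one-line verification that the normalization in \eqref{approxP_kbis} is legitimate. The degenerate case $\nabla f(\bm{x}_k)=\bm{0}$ makes the statement vacuous and can be dismissed immediately.
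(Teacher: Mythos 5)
Your proposal is correct and follows essentially the same route as the paper's proof: lower-bound $\|P_k^\top \nabla f(\bm{x}_k)\|$ by the inner product with the normalized column $Q_k\nabla_{Q_k,\varepsilon}f(\bm{x}_k)/\|Q_k\nabla_{Q_k,\varepsilon}f(\bm{x}_k)\|$, use Proposition~\ref{prop:approxnorm1} together with the hypothesis to show the finite-difference error is at most half of $\|Q_k^\top\nabla f(\bm{x}_k)\|$ (yielding the factors $\tfrac12$ and $\tfrac32$, hence the $3$ in the denominator), and conclude on the event where Lemmas~\ref{rand_norm2} and~\ref{rand_norm} hold. If anything, your bookkeeping of the denominator via $\|Q_kQ_k^\top\|_{\mathrm{op}}$ lands on the constant $\tfrac{(1-\epsilon)d}{3\mathcal{C}n}$ slightly more directly than the paper's appeal to ``similarly to \eqref{P_knabla}.''
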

\begin{proof}	Assume first that $\|Q_k^\top \nabla f(\bm{x}_k)\|\ge 4d \|Q_k\|^2L \varepsilon$ .
	Since the columns of now contains $P_k$ contain $\dfrac{Q_k\nabla_{Q_k,\varepsilon}f(\bm{x}_k)}{\|Q_k\nabla_{Q_k,\varepsilon}f(\bm{x}_k)\|}$, we have
	$$ 	\|P_k^\top \nabla f(\bm{x}_k)\| \ge  \frac{(Q_k^\top \nabla f(\bm{x}_k))^\top \nabla_{Q_k,\varepsilon}f(\bm{x}_k)}{ \|Q_k\nabla_{Q_k,\varepsilon}f(\bm{x}_k)\|}.$$
	Hence, by Proposition \ref{prop:approxnorm1} and using that :
	$$ 	\|P_k^\top \nabla f(\bm{x}_k)\| \ge  \frac{\|Q_k^\top \nabla f(\bm{x}_k)\|(\|Q_k^\top \nabla f(\bm{x}_k)\| - 2d \|Q_k\|^2L \varepsilon )}{\|Q_k \|( \|Q_k^\top f(\bm{x}_k)\| + 2d \|Q_k\|^2L \varepsilon )} >0.$$
	Using that $\|Q_k^\top \nabla f(\bm{x}_k)\|\ge 4d \|Q_k\|^2L \varepsilon$, we deduce that
	$\|Q_k^\top \nabla f(\bm{x}_k)\| - 2d \|Q_k\|^2L \varepsilon \ge \frac{1}{2} \|Q_k^\top \nabla f(\bm{x}_k)\| $ and that $\|Q_k^\top f(\bm{x}_k)\| + 2d \|Q_k\|^2L \varepsilon \le \frac{3}{2}\|Q_k^\top f(\bm{x}_k)\| $. Hence,
	$$ 	\|P_k^\top \nabla f(\bm{x}_k)\| \ge  \frac{1}{3}\frac{\|Q_k^\top \nabla f(\bm{x}_k)\|}{ \|Q_k\|}.$$
	Similarly to \eqref{P_knabla}, we deduce that with probability at least $p(\epsilon)$:
	$$ 	\|P_k^\top \nabla f(\bm{x}_k)\| \ge \dfrac{d(1-\epsilon)}{3\mathcal{C}n} \|\nabla f(x_k)\|.$$ 
	Notice that we have by Lemma \ref{P_knorm} that $\|Q^\top \nabla f(\bm{x}_k)\| \ge (1-\epsilon)\sqrt{d}\nabla f(\bm{x}_k)\| $. Hence $\|\nabla f(\bm{x}_k)\|\ge\frac{ 4 \sqrt{d}\|Q_k\|^2L \varepsilon}{1-\epsilon}$ implies that $\|Q_k^\top \nabla f(\bm{x}_k)\|\ge 4d \|Q_k\|^2L \varepsilon$.
\end{proof}

Notice first we have the same proposition for the lower bound on the step-size:
\begin{lem}\label{stepsize4}
	Assume that, $c<1/2$ and
	$$\|{\nabla}f(\bm{x}_k)  \| \ge  \dfrac{6\mathcal{C}nm^2L}{d} \left(\frac{M_2}{M_1 c}+1 \right)\varepsilon.$$ Suppose that Assumption~\ref{Lipdif2} is satisfied. For any $k$, the step size $\alpha_k$ satisfies, with probability at least $p(\epsilon)$ 
	\begin{equation}
		\alpha_k\geq\bar{\alpha}:=\min\left(1,\dfrac{2\beta(1-2c)}{mM_2L}\right).
	\end{equation}
\end{lem}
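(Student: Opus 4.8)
The plan is to mimic the proof of Theorem~\ref{stepsize2} (Appendix~\ref{sec:stepsize_proof}), but carry along the finite-difference error terms from Propositions~\ref{prop:approxnorm1} and~\ref{prop:approxnorm2}. Recall the exact proof: one uses $\bm{d}_k=-H_k P_k^\top\nabla f(\bm{x}_k)$ together with $M_1 I_m\preceq H_k\preceq M_2 I_m$ to get $-(P_k\bm{d}_k)^\top\nabla f(\bm{x}_k)\ge \frac{1}{mM_2}\|P_k\bm{d}_k\|^2$, then plugs this into the descent lemma $f(\bm{x}+\bm{y})\le f(\bm{x})+\nabla f(\bm{x})^\top\bm{y}+\frac L2\|\bm{y}\|^2$ to conclude that Armijo holds as soon as $1-\frac{mM_2L\alpha_k}{2}\ge c$. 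In the finite-difference setting, $\bm{d}_k=-H_k\nabla_{P_k,\varepsilon}f(\bm{x}_k)$, so the inner product $-(P_k\bm{d}_k)^\top\nabla f(\bm{x}_k)$ is no longer exactly $\bm{d}_k^\top H_k^{-1}\bm{d}_k$; instead we write $\nabla f(\bm{x}_k)$ inside it as $\nabla_{P_k,\varepsilon}f$-part plus an error. So the first step is to control this perturbation.

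Concretely, I would proceed as follows. First, abbreviate $\bm{r}_k:=\nabla_{P_k,\varepsilon}f(\bm{x}_k)$ and $\bm{\eta}_k:=\bm{r}_k-P_k^\top\nabla f(\bm{x}_k)$, so by Proposition~\ref{prop:approxnorm2} (and $\|P_k\|\le\sqrt m$) we have $\|\bm{\eta}_k\|\le 2m^2L\varepsilon$. Since $\bm{d}_k=-H_k\bm{r}_k$ and $H_k\succeq M_1 I_m$, we get $\bm{r}_k^\top H_k\bm{r}_k\ge M_1\|\bm{r}_k\|^2$, hence $-(P_k\bm{d}_k)^\top\nabla f(\bm{x}_k)=\bm{d}_k^\top\bm{r}_k-\bm{d}_k^\top\bm{\eta}_k=\bm{r}_k^\top H_k\bm{r}_k-\bm{d}_k^\top\bm{\eta}_k$. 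The second step is to apply the descent lemma with $\bm{y}=\alpha_k P_k\bm{d}_k$, giving
\begin{equation}\label{eq:armijo_fd_aux}
	f(\bm{x}_k+\alpha_kP_k\bm{d}_k)\le f(\bm{x}_k)+\alpha_k\nabla f(\bm{x}_k)^\top P_k\bm{d}_k+\frac{L}{2}\alpha_k^2\|P_k\bm{d}_k\|^2.
\end{equation}
Since the Armijo test \eqref{eq:armijo2} uses the \emph{approximate} gradient, $g_k(\bm{0})+c\alpha_k\bm{r}_k^\top\bm{d}_k = f(\bm{x}_k)-c\alpha_k\bm{r}_k^\top H_k\bm{r}_k$, I must show the right-hand side of \eqref{eq:armijo_fd_aux} is $\le f(\bm{x}_k)-c\alpha_k\bm{r}_k^\top H_k\bm{r}_k$. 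Writing $\nabla f(\bm{x}_k)^\top P_k\bm{d}_k=\bm{d}_k^\top(P_k^\top\nabla f(\bm{x}_k)) = -\bm{r}_k^\top H_k\bm{r}_k+\bm{d}_k^\top\bm{\eta}_k$ and $\|P_k\bm{d}_k\|^2\le m\|\bm{d}_k\|^2\le mM_2\,\bm{r}_k^\top H_k\bm{r}_k$ (using $\|\bm{d}_k\|^2=\bm{r}_k^\top H_k^2\bm{r}_k\le M_2\,\bm{r}_k^\top H_k\bm{r}_k$), the inequality reduces, after cancelling $f(\bm{x}_k)$ and dividing by $\alpha_k$, to
\[
	-(1-c)\bm{r}_k^\top H_k\bm{r}_k+\bm{d}_k^\top\bm{\eta}_k+\frac{L}{2}\alpha_k mM_2\,\bm{r}_k^\top H_k\bm{r}_k\le 0.
\]
So it suffices that $\alpha_k\le\frac{2(1-2c)}{mM_2L}$ \emph{and} that the error term $\bm{d}_k^\top\bm{\eta}_k$ is dominated by the remaining slack $c\,\bm{r}_k^\top H_k\bm{r}_k$; bounding $|\bm{d}_k^\top\bm{\eta}_k|\le\|\bm{d}_k\|\|\bm{\eta}_k\|\le \sqrt{M_2}\,(\bm{r}_k^\top H_k\bm{r}_k)^{1/2}\cdot 2m^2L\varepsilon$, this holds provided $(\bm{r}_k^\top H_k\bm{r}_k)^{1/2}\ge \frac{2\sqrt{M_2}\,m^2L\varepsilon}{c}$, i.e. $M_1\|\bm{r}_k\|^2$ is large enough. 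The third step is therefore to translate the hypothesis $\|\nabla f(\bm{x}_k)\|\ge \frac{6\mathcal{C}nm^2L}{d}\left(\frac{M_2}{M_1c}+1\right)\varepsilon$ into a lower bound on $\|\bm{r}_k\|$ that closes this gap: on the event of probability $p(\epsilon)$ one has (by the argument in Lemma~\ref{lem:aux}, which gives $\|P_k^\top\nabla f(\bm{x}_k)\|\ge\frac{d(1-\epsilon)}{3\mathcal{C}n}\|\nabla f(\bm{x}_k)\|$, or simply by $\|P_k^\top\nabla f\|\ge$ the component along the normalized gradient column) that $\|\bm{r}_k\|\ge\|P_k^\top\nabla f(\bm{x}_k)\|-2m^2L\varepsilon$ is comparable to $\frac{d}{\mathcal{C}n}\|\nabla f(\bm{x}_k)\|$, and the assumed lower bound is exactly calibrated so that $\sqrt{M_1}\|\bm{r}_k\|\ge\frac{2\sqrt{M_2}m^2L\varepsilon}{c}$. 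Finally, since $0<\beta<1$, the backtracking line search selects some $\alpha_k=\beta^i\ge\min\!\left(1,\frac{2\beta(1-2c)}{mM_2L}\right)$, which is the claimed $\bar\alpha$.

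The main obstacle is bookkeeping the constants so that the two requirements — $\alpha_k\le\frac{2(1-2c)}{mM_2L}$ for the deterministic part, and the error term $\bm{d}_k^\top\bm{\eta}_k$ being absorbed — are simultaneously met, and in particular verifying that the specific threshold $\frac{6\mathcal{C}nm^2L}{d}(\frac{M_2}{M_1c}+1)\varepsilon$ in the hypothesis is large enough given the $\frac13$ and $(1-\epsilon)$ losses coming from Lemma~\ref{lem:aux}. One subtlety is whether to bound the error using the clean estimate $\|\bm{r}_k\|\ge\|P_k^\top\nabla f(\bm{x}_k)\|-\|\bm{\eta}_k\|$ directly, or to go through the $Q_k$-side bound; I expect the former is cleaner and the high-probability event is only needed to relate $\|P_k^\top\nabla f(\bm{x}_k)\|$ to $\|\nabla f(\bm{x}_k)\|$, which is precisely what gives the probability $p(\epsilon)$ in the statement. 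Everything else is a routine repeat of the exact-gradient proof with $c$ replaced by $2c$ in the decisive inequality $1-\frac{mM_2L\alpha_k}{2}\ge 2c$.
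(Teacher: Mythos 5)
Your proposal follows essentially the same route as the paper's proof: descent lemma plus the finite-difference error bound of Proposition~\ref{prop:approxnorm2}, absorption of the error term $\bm{d}_k^\top\bm{\eta}_k$ into the Armijo slack (which is why $1-c$ becomes $1-2c$ and $c<1/2$ is needed), and Lemma~\ref{lem:aux} to convert the hypothesis on $\|\nabla f(\bm{x}_k)\|$ into the required lower bound on $\|\nabla_{P_k,\varepsilon}f(\bm{x}_k)\|$ on the probability-$p(\epsilon)$ event. The only blemishes are a sign slip in the identity $-(P_k\bm{d}_k)^\top\nabla f(\bm{x}_k)=\bm{r}_k^\top H_k\bm{r}_k+\bm{d}_k^\top\bm{\eta}_k$ (harmless since you bound $|\bm{d}_k^\top\bm{\eta}_k|$) and slightly different constant bookkeeping ($\sqrt{M_2/M_1}$ where the paper uses $M_2/M_1$, which is a weaker requirement and thus still covered by the stated threshold).
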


\begin{proof}
	By definition of $H_k$, we have that $\bm{d}_k=-H_k{\nabla}_{P_k,\varepsilon}f(\bm{x}_k)$ and $H_k \preceq M_2I_m$  (by \eqref{B_knorm}). Therefore, 
	\begin{equation}
		-\bm{d}_k^{\top}{\nabla}_{P_k,\varepsilon}f(\bm{x}_k)=\bm{d}_k^{\top}H_k^{-1}\bm{d}_k\geq\dfrac{1}{M_2}\|\bm{d}_k\|^2.
	\end{equation}
	which leads to 
	\begin{equation}
		\|\bm{d}_k\|^2\leq -M_2\bm{d}_k^{\top}{\nabla}_{P_k,\varepsilon}f(\bm{x}_k).
	\end{equation}
	Hence, we have that
	\begin{equation}
		\begin{split}
			f(\bm{x}_k+\alpha_k P_k\bm{d}_k)&\leq f(\bm{x}_k)+\alpha_k\nabla f(\bm{x}_k)^{\top}P_k\bm{d}_k+\alpha_k^2\dfrac{L}{2}\|P_k\bm{ d}_k\|^2\\
			&\leq f(\bm{x}_k)+\alpha_k\nabla_{P_k,\varepsilon} f(\bm{x}_k)^{\top}\bm{d}_k+\alpha_k^2m\dfrac{L}{2}\|\bm{ d}_k\|^2+\alpha_k\|\nabla_{P_k,\varepsilon}f(x)-P_k^\top \nabla f(x)\|\|\bm{d}_k\|\\
			&\leq f(\bm{x}_k)+\alpha_k\nabla_{P_k,\varepsilon} f(\bm{x}_k)^{\top}\bm{d}_k+\alpha_k^2\dfrac{L}{2}m\|\bm{ d}_k\|^2+2m^2L\alpha_k\varepsilon\|\bm{d}_k\|\\
			&{\leq} f(\bm{x}_k)+\alpha_k\nabla f(\bm{x}_k)^{\top}P_k\bm{d}_k\left(1-\dfrac{mM_2L\alpha_k}{2} \right)+\alpha_k^2\dfrac{L}{2}m\|\bm{ d}_k\|^2+2m^2L\alpha_k\varepsilon\|\bm{d}_k\|.
		\end{split}
		\notag
	\end{equation}
	Assume first that $\|{\nabla}_{P_k,\varepsilon}f(\bm{x}_k)\|\ge 2m^2L\frac{M_2}{M_1 c}\varepsilon $ holds. Then it implies that $2m^2L\varepsilon \le c\frac{1}{M_2}\|\bm{d}_k\|$. Hence,
	$$f(\bm{x}_k+\alpha_k P_k\bm{d}_k) \le f(\bm{x}_k)+\alpha_k\nabla f(\bm{x}_k)^{\top}P_k\bm{d}_k\left(1-\dfrac{mM_2L\alpha_k}{2} -{c}\right).$$
	Therefore, since $
	\nabla f(\bm{x}_k)^{\top}P_k\bm{d}_k=\nabla g_k(\bm{0})^{\top}\bm{d}_k <0
	$, we see that if
	\begin{equation}
		\left(1-{c}-\dfrac{mM_2L\alpha_k}{2}\right)\geq c, 
		\notag
	\end{equation}
	i.e., $\alpha_k\leq\dfrac{2(1-2c)}{mM_2L}$ holds, 
	{\eqref{linsearch}} is satisfied. 
	To conclude, we prove that the condition in the statement of the Lemma implies that 
	$$\|{\nabla}_{P_k,\varepsilon}f(\bm{x}_k)\|\ge 2m^2L\frac{M_2}{M_1 c}\varepsilon $$
	holds. Indeed, by Proposition \ref{prop:approxnorm2}, notice that
	we must ensure that $\|P_k^\top {\nabla}f(\bm{x}_k) \| \ge 2m^2L\frac{M_2}{M_1 c}\varepsilon + 2m^2L\varepsilon $  holds. However, by Lemma \ref{lem:aux}, this is holds with probability at least $p(\epsilon)$ if 
	$$ \dfrac{d}{3\mathcal{C}n}\|{\nabla}f(\bm{x}_k)\|  \ge  2m^2L \left(\frac{M_2}{M_1 c}+1 \right)\varepsilon ,$$
	which is exactly the condition in the statement.
\end{proof}

\textbf{Proof of Theorem \ref{nonconvex2b}}
\begin{proof}
	We have that
	\begin{equation}
		\begin{split}
			f(\bm{x}_{k+1})-f(\bm{x}_k) & {\leq} -c\alpha_k\nabla_{P_k,\varepsilon} f(\bm{x}_k)^{\top} H_k \nabla_{P_k,\varepsilon} f(\bm{x}_k)\\
			&\overset{\eqref{B_knorm}}{\leq} -c\alpha_kM_1\|\nabla_{P_k,\varepsilon} f(\bm{x}_k)\|^2.
		\end{split}
	\end{equation}
	Assume first that
	$$\|{\nabla}f(\bm{x}_k)  \| \ge  \dfrac{6\mathcal{C}nm^2L}{d} \left(\frac{M_2}{M_1 c}+1 \right)\varepsilon \quad (\ast)$$
	Assume that the event $\mathcal{A}$ holds.
	Notice that, as proved in the proof of Lemma \ref{stepsize4}, this implies that
	$$\|P_k^\top {\nabla}f(\bm{x}_k) \| \ge 2m^2L\left(\frac{M_2}{M_1 c}+1\right)\varepsilon$$
	By Proposition \ref{prop:approxnorm2}, we have that
	$$\|\nabla_{P_k,\varepsilon} f(\bm{x}_k)\| \ge \|P_k^\top \nabla f(\bm{x}_k)\| -2m^2 L \varepsilon.  $$
	Notice however that 
	$$\|P_k^\top \nabla f(\bm{x}_k)\| -2m^2 L \varepsilon \ge \frac{1}{2}\|P_k^\top \nabla f(\bm{x}_k)\|. $$
	Hence $\|\nabla_{P_k,\varepsilon} f(\bm{x}_k)\| \ge \frac{1}{2}\|P_k^\top \nabla f(\bm{x}_k)\|,$ and 
	$$f(\bm{x}_{k+1})-f(\bm{x}_k) \le -\frac{c\alpha_kM_1}{4}\|P_k^\top \nabla f(\bm{x}_k)\|^2 .$$
	By Lemma \ref{lem:aux}, and \ref{stepsize4} we deduce that, if 
	$$\|{\nabla}f(\bm{x}_k)  \| \ge  \max\left(\frac{ 4 \sqrt{d}\mathcal{C}n}{1-\epsilon}, \dfrac{6\mathcal{C}nm^2}{d} \left(\frac{M_2}{M_1 c}+1 \right) \right)L \varepsilon $$ 
	(Notice that this event $\mathcal{A}$, we have that $\|Q_k\|^2 \le \mathcal{C}n$ by Lemma \ref{rand_norm})then:
	
	\begin{equation}\label{decrease4}
		f(\bm{x}_{k+1})-f(\bm{x}_k)\leq-\frac{c\bar{\alpha} M_1}{4} \left(\dfrac{(1-\epsilon)d}{3\mathcal{C}n}\right)^2\|\nabla f(\bm{x}_k)\|^2.
	\end{equation}
	By using the same argument as before (by conditioning over $\mathcal{A}$ and using that $\mathbb{P}(\mathcal{A})\ge p(\epsilon)$), we deduce by taking the expectation that
	\begin{equation}
		\mathbb{E}[f(\bm{x}_{k+1})-f(\bm{x}_k)]\leq-\frac{c\bar{\alpha} M_1}{4} \left(\dfrac{(1-\epsilon)d}{3\mathcal{C}n}\right)^2 p(\epsilon)\mathbb{E}[\|\nabla f(\bm{x}_k)\|^2].
	\end{equation}
	That is
	$$\mathbb{E}[\|\nabla f(\bm{x}_k)\|^2] \le  \left(\frac{c\bar{\alpha} M_1}{4} \left(\dfrac{(1-\epsilon)d}{3\mathcal{C}n}\right)^2 p(\epsilon)\right)^{-1}	\mathbb{E}[f(\bm{x}_{k})-f(\bm{x}_{k+1})] . $$
	Let $K:= \max\left(\frac{ 4 \sqrt{d}\mathcal{C}n}{1-\epsilon}, \dfrac{6\mathcal{C}nm^2}{d} \left(\frac{M_2}{M_1 c}+1 \right) \right)L$
	If we do not assume that $\|{\nabla}f(\bm{x}_k)  \| \ge K \varepsilon $ holds then, we have that for any $k\ge 0$
	\begin{equation}\label{eq:ab}
		\mathbb{E}[\|\nabla f(\bm{x}_k)\|^2] \le  \left(\frac{c\bar{\alpha} M_1 p(\epsilon)}{4} \left(\dfrac{(1-\epsilon)d}{3\mathcal{C}n}\right)^2 \right)^{-1}	\mathbb{E}[f(\bm{x}_{k})-f(\bm{x}_{k+1})] + K^2\varepsilon^2. 
	\end{equation}
	Adding this equation to $k=0,1,\dots$, we obtain 
	
	\begin{equation}
		\sum_{j=0}^{k}\mathbb{E}[\|\nabla f(\bm{x}_j)\|^2]\leq  \left(\frac{c\bar{\alpha} M_1 p(\epsilon)}{4} \left(\dfrac{(1-\epsilon)d}{3\mathcal{C}n}\right)^2 \right)^{-1}(f(\bm{x}_0)-f^*) + (k+1)K^2\varepsilon^2.
		\notag
	\end{equation}
	From
	\begin{equation}
		\begin{split}
			\mathbb{E}\left[\min_{0\leq j\leq k}\|\nabla f(\bm{x}_k)\|^2\right]&\leq\dfrac{1}{k+1}\sum_{j=0}^{k}\mathbb{E}[\|\nabla f(\bm{x}_k)\|^2], \\
		\end{split}
	\end{equation}
	we deduce that
	$$\mathbb{E}\left[\min_{0\leq j\leq k}\|\nabla f(\bm{x}_k)\|^2\right] \le\dfrac{1}{k+1} \left(\frac{c\bar{\alpha} M_1 p(\epsilon)}{4} \left(\dfrac{(1-\epsilon)d}{3\mathcal{C}n}\right)^2 \right)^{-1}(f(\bm{x}_0)-f^*) + K^2\varepsilon^2.$$
\end{proof}

\textbf{Proof of Theorem \ref{convex3}}
\begin{proof}
	From the convexity of $f$ and the definition of $R_0$, for any $\bm{x}\in\mathbb{R}^n$ and the optimal solution $\bm{x}^*$
	of \eqref{R_0}, 
	\begin{equation}
		f(\bm{x})-f^*\leq\nabla f(\bm{x})^\top(\bm{x}-\bm{x}^*)\leq\|\nabla f(\bm{x})\| \|\bm{x}-\bm{x}^*\|\leq\|\nabla f(\bm{x})\|R_0,
		\notag
	\end{equation}
	that is, 
	\begin{equation}
		\|\nabla f(\bm{x})\|^2\geq\dfrac{(f(\bm{x})-f^*)^2}{R_0^2}.
	\end{equation}
	Now we define
	\begin{equation}
		\Delta_k:=\mathbb{E}[f(\bm{x}_k)-f^*].
		\notag
	\end{equation}
	From Jensen's inequality, it follows that
	\begin{equation}
		\mathbb{E}[(f(\bm{x}_k)-f^*)^2]\geq\Delta_k^2.
		\notag
	\end{equation}
	Setting $\bm{x}=\bm{x}_k$ and taking the expectation of both sides of \eqref{f(x)}, we obtain
	\begin{equation}
		\mathbb{E}[\|\nabla f(\bm{x}_k)\|^2]\geq\dfrac{\mathbb{E}[(f(\bm{x}_k)-f^*)^2]}{R_0^2}\geq\dfrac{\Delta_k^2}{R_0^2}
	\end{equation}
	Assume first that 
	\begin{equation}
		f(x_k)-f^*\ge 2R_0K\varepsilon.
	\end{equation}
	Then we also have that
	\begin{equation}
		\mathbb{E}[\|\nabla f(\bm{x}_k)\|^2]\ge 4K^2\varepsilon^2,
	\end{equation}
	the above implies that
	\begin{equation}\label{eq:abc}
		\|\nabla f(\bm{x}_k )\|^2-K^2\varepsilon^2 \ge \frac{3}{4}\|\nabla f(\bm{x}_k )\|^2
	\end{equation}
	Rewriting \eqref{eq:ab} using $\Delta_k$, 
	we obtain
	
	\begin{eqnarray*}
		\Delta_{k+1} & \leq & \Delta_k-\left(\frac{c\bar{\alpha} M_1 p(\epsilon)}{4} \left(\dfrac{(1-\epsilon)d}{3\mathcal{C}n}\right)^2 \right)(\mathbb{E}[\|\nabla f(\bm{x}_k )\|^2]-K^2\varepsilon^2) \\
		& \overset{\eqref{eq:abc}}{\leq} & \Delta_k-\frac{3}{4}\left(\frac{c\bar{\alpha} M_1 p(\epsilon)}{4} \left(\dfrac{(1-\epsilon)d}{3\mathcal{C}n}\right)^2 \right)\mathbb{E}[\|\nabla f(\bm{x}_k )\|^2]\\
		& \overset{\eqref{minusnabla}}{\leq} & \Delta_k-\frac{3}{4}\left(\frac{c\bar{\alpha} M_1 p(\epsilon)}{4} \left(\dfrac{(1-\epsilon)d}{3\mathcal{C}R_0n}\right)^2 \right)\Delta_k^2
		\notag
	\end{eqnarray*}
	Dividing this equation by both sides $\Delta_k\Delta_{k+1}(>0)$ and using the fact that $\Delta_k$ is monotonically decreasing with respect to $k$ and therefore $\dfrac{\Delta_k}{\Delta_{k+1}}\geq 1$, 
	\begin{equation}
		\dfrac{1}{\Delta_k}\leq\dfrac{1}{\Delta_{k+1}}-\frac{3}{4}\left(\frac{c\bar{\alpha} M_1 p(\epsilon)}{4} \left(\dfrac{(1-\epsilon)d}{3\mathcal{C}R_0n}\right)^2 \right)\dfrac{\Delta_k }{\Delta_{k+1}}\leq\dfrac{1}{\Delta_{k+1}}-\frac{3}{4}\left(\frac{c\bar{\alpha} M_1 p(\epsilon)}{4} \left(\dfrac{(1-\epsilon)d}{3\mathcal{C}R_0n}\right)^2 \right).
		\notag
	\end{equation}
	By using this with $k=0,1,\dots$, we get
	\begin{equation}
		\dfrac{1}{\Delta_0}\leq\dfrac{1}{\Delta_{k+1}}-\frac{3}{4}\left(\frac{c\bar{\alpha} M_1 p(\epsilon)}{4} \left(\dfrac{(1-\epsilon)d}{3\mathcal{C}R_0n}\right)^2 \right) (k+1)\bar{\alpha} 
		\notag
	\end{equation}
	By writing this, 
	\begin{equation}\label{Deltaa}
		\begin{split}
			\Delta_{k+1}\leq 
			\dfrac{146\Delta_0\mathcal{C}^2n^2R_0^2}{146\mathcal{C}^2n^2R_0^2+3\Delta_0cM_1p(\epsilon)(1-\epsilon)^2d^2(k+1)\bar{\alpha }}.
		\end{split}
	\end{equation}
	Now, from the definition of $\Delta_k$, \eqref{Lipdif} and \eqref{R_0} together with the convexity of $f$ and
	$\nabla f(\bm{x}^*)=\bm{0}$, we have
	\begin{equation}\label{Delta_1}
		\Delta_0=f(\bm{x}_0)-f^*\leq L \|\bm{x}_0-\tilde{\bm{x}}\|^2 \leq L R_0^2,
	\end{equation}
	where
	\begin{equation}
		\tilde{\bm{x}}:=\argmin_{\bm{x}^* \in \Omega}\|\bm{x}_0-\bm{x}^*\|.
		\notag
	\end{equation}
	Since the term in the right of inequalities \eqref{Deltaa} is monotonically increasing with respect to $\Delta_0$,
	\eqref{Deltaa} together with  \eqref{Delta_1} leads to
	\begin{equation}
		\Delta_{k}\leq 
		\dfrac{L 146\mathcal{C}^2n^2R_0^2}{146\mathcal{C}^2n^2+L cM_1p(\epsilon)(1-\epsilon)^2d^2k\bar{\alpha}},
		\notag
	\end{equation}
	We complete the proof, by adding the term $2R_0K\varepsilon$ to the bound in the case where
	\begin{equation}
		f(x_k)-f^*\ge 2R_0K\varepsilon.
	\end{equation}
	is not satisfied 
\end{proof}

\section{Additional Numerical Experiments}\label{sec:add_numerics}

\begin{figure*}[!htbp]
	\centering
	\subfloat[Objective function values for 10 runs with $d=10$ and $m=10$ (left) or $m=100$ (right) \label{fig:function-values-benchmark}]{
		\includegraphics[width=0.50\linewidth]{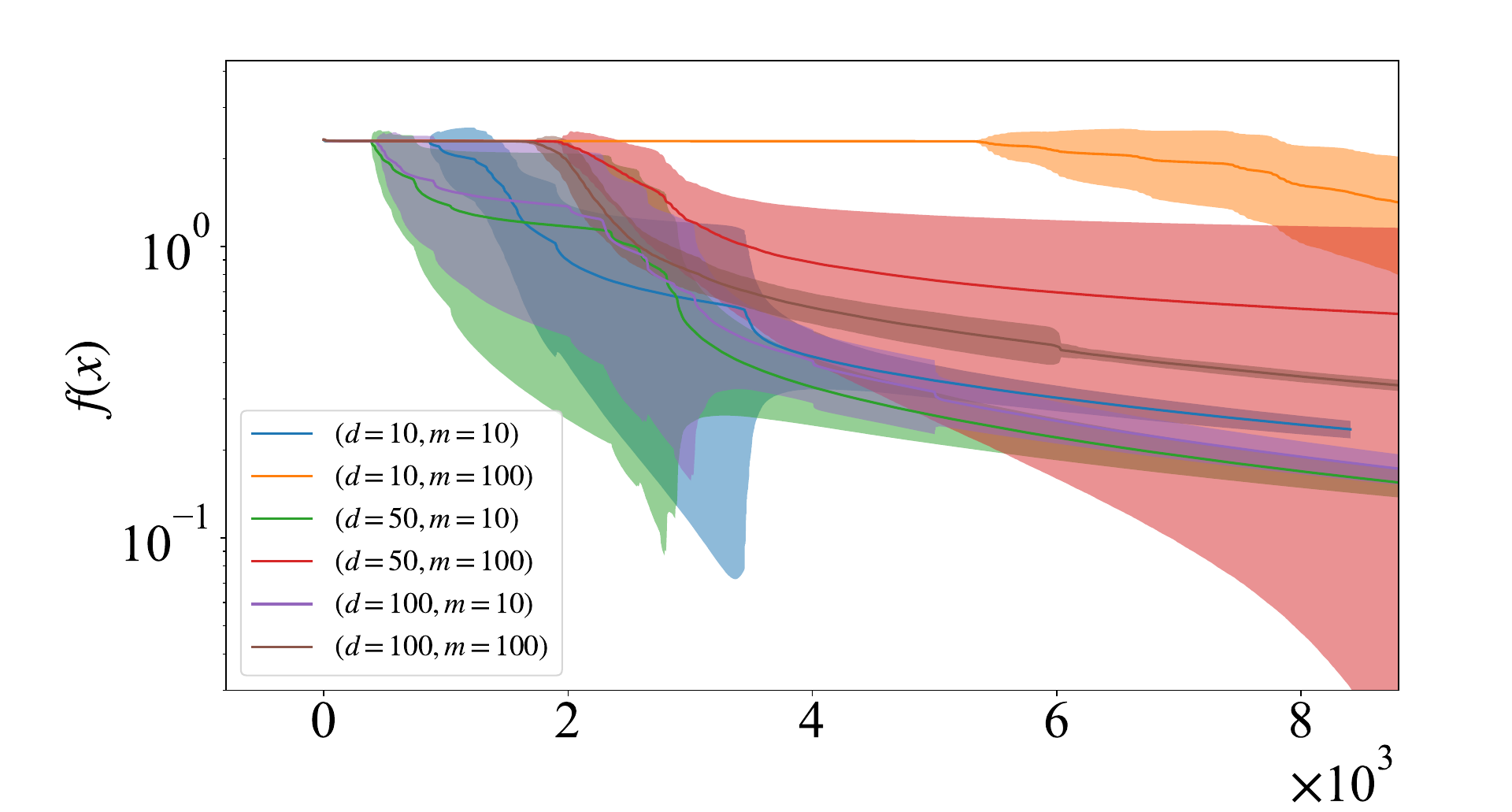}
		\includegraphics[width=0.50\linewidth]{figures/new_experiment/compare_proposed_parameter_mlpnet_average.pdf}
	}\par\medskip
	\subfloat[Objective function values for 10 runs with $d=50$ and $m=10$ (left) or $m=100$ (right) \label{fig:gradnorm-benchmark}]{
		\includegraphics[width=0.50\linewidth]{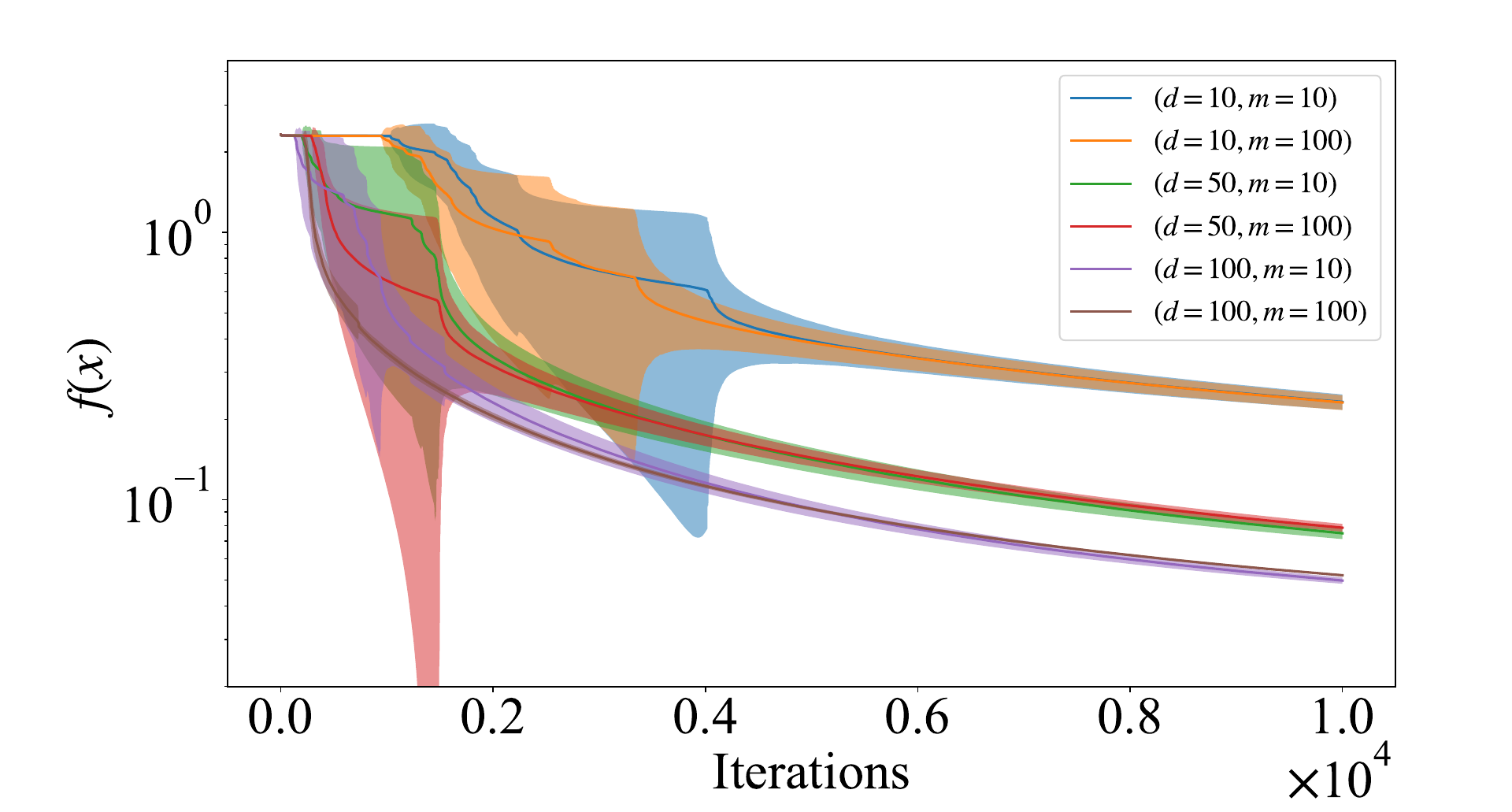}
		\includegraphics[width=0.50\linewidth]{figures/new_experiment/compare_proposed_parameter_mlpnet_iteration_average.pdf}
	}\par\medskip
	\caption{The change of performance depending on $m$ and $d$ for the LNN}\label{fig:parameter_m_d}
\end{figure*}

\begin{figure*}[!htbp]
	\centering
	\subfloat[Objective function values for 10 runs with $d=10$ and $m=10$ (left) or $m=100$ (right) \label{fig:parameter_d10}]{
		\includegraphics[width=0.50\linewidth]{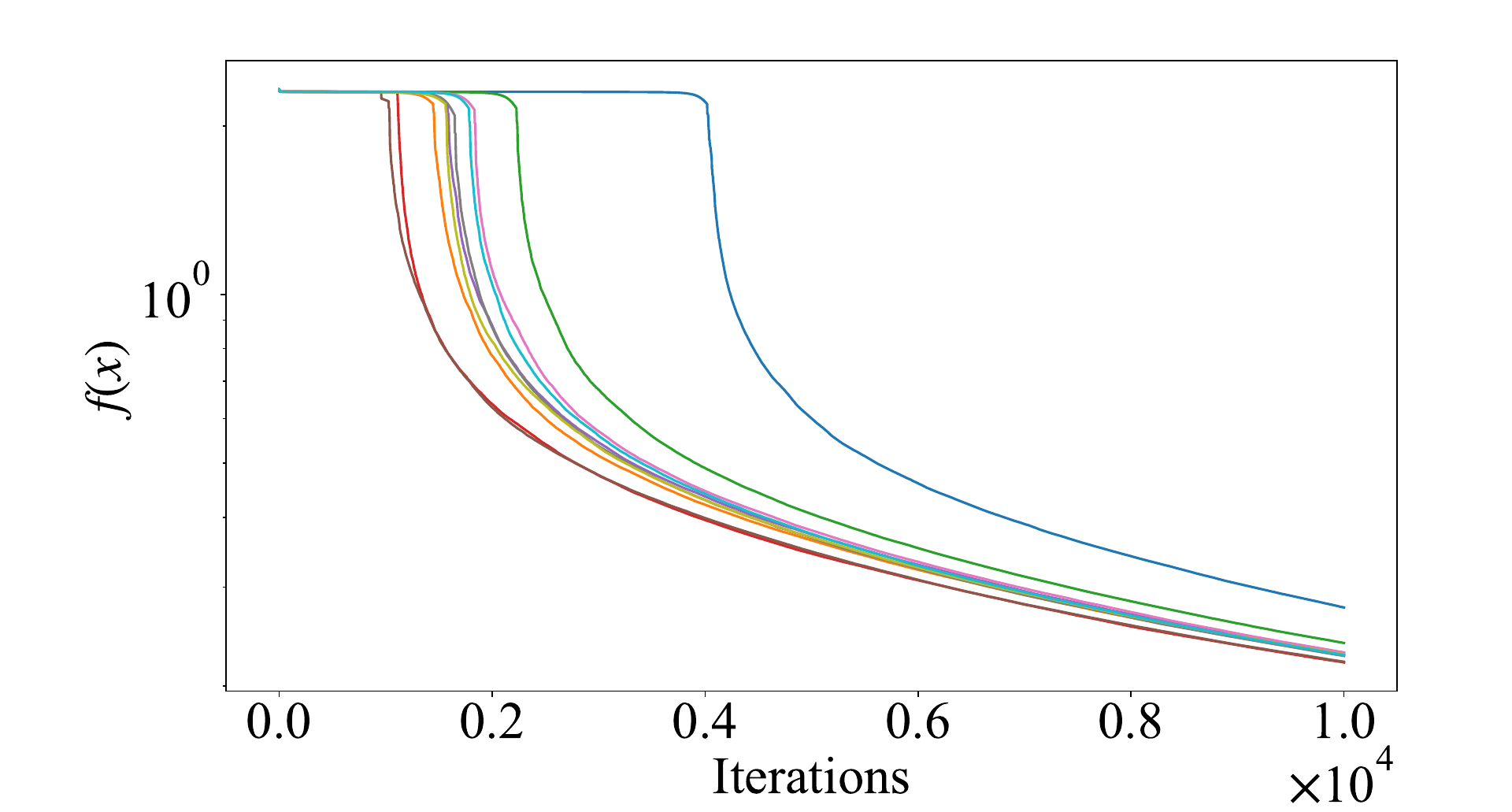}
		\includegraphics[width=0.50\linewidth]{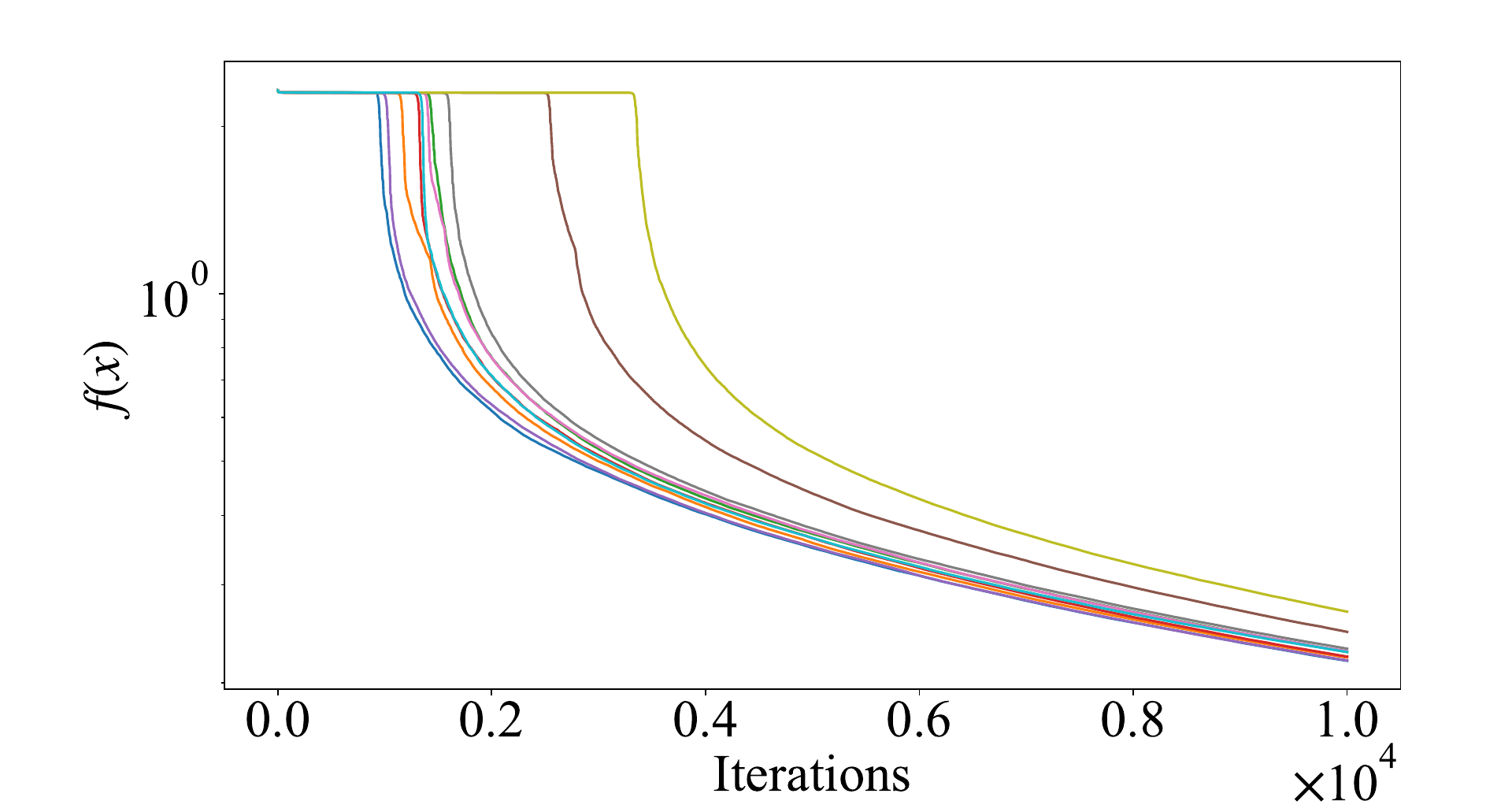}
	}\par\medskip
	\subfloat[Objective function values for 10 runs with $d=50$ and $m=10$ (left) or $m=100$ (right) \label{fig:parameter_d50}]{
		\includegraphics[width=0.50\linewidth]{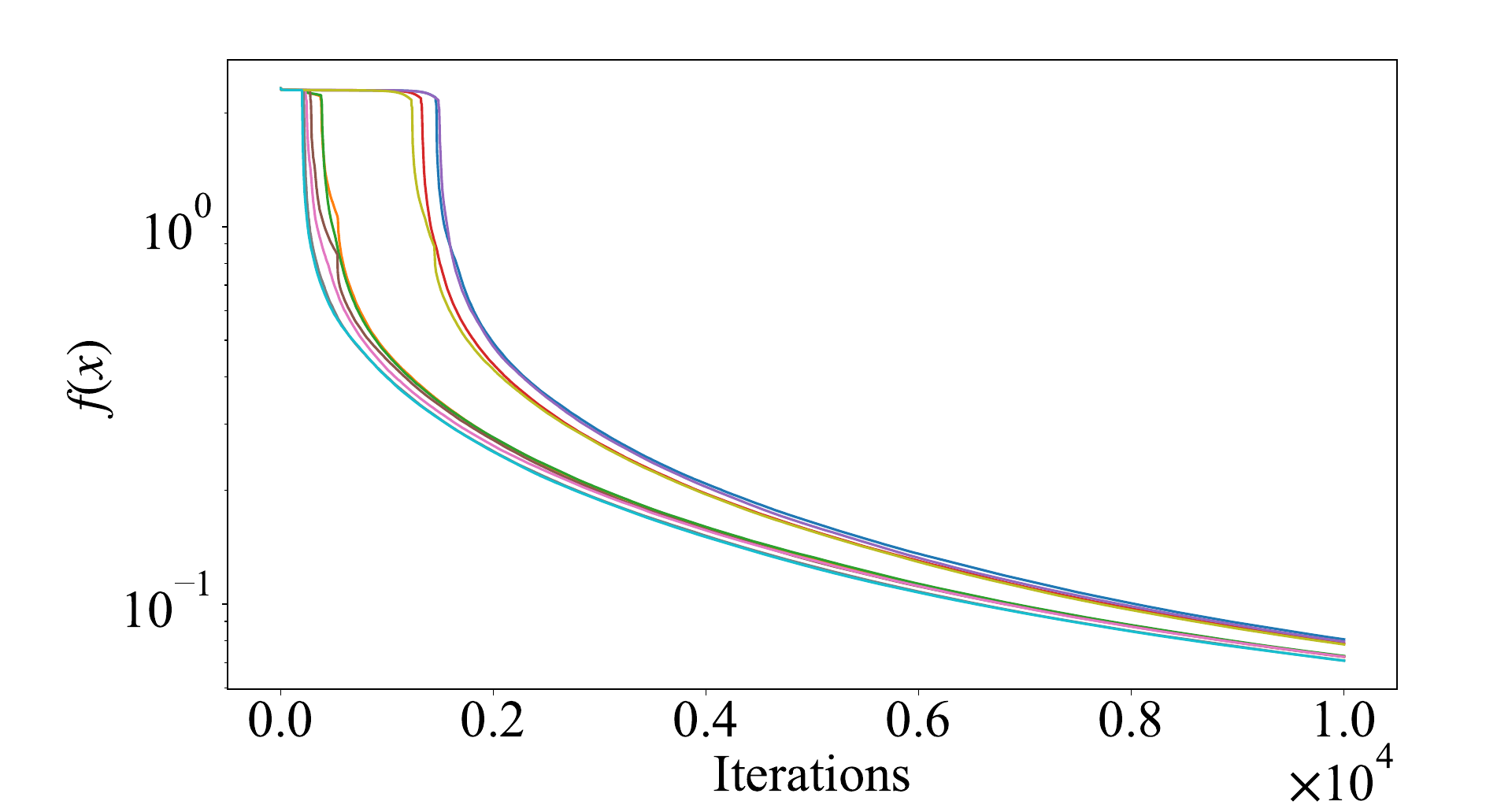}
		\includegraphics[width=0.50\linewidth]{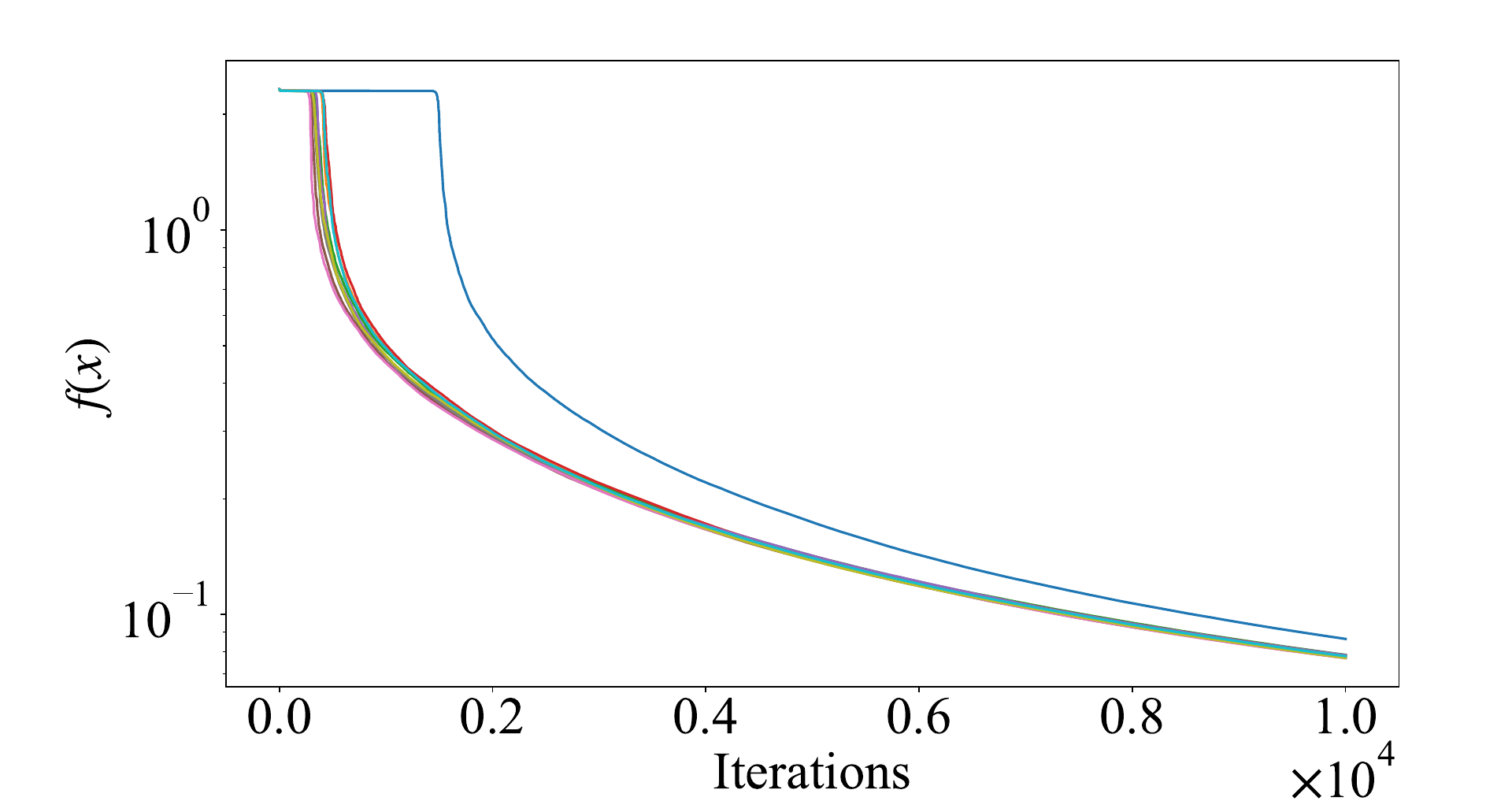}
	}\par\medskip
	\subfloat[Objective function values for 10 runs with $d=100$ and $m=10$ (left) or $m=100$ (right) \label{fig:parameter_d100}]{
		\includegraphics[width=0.50\linewidth]{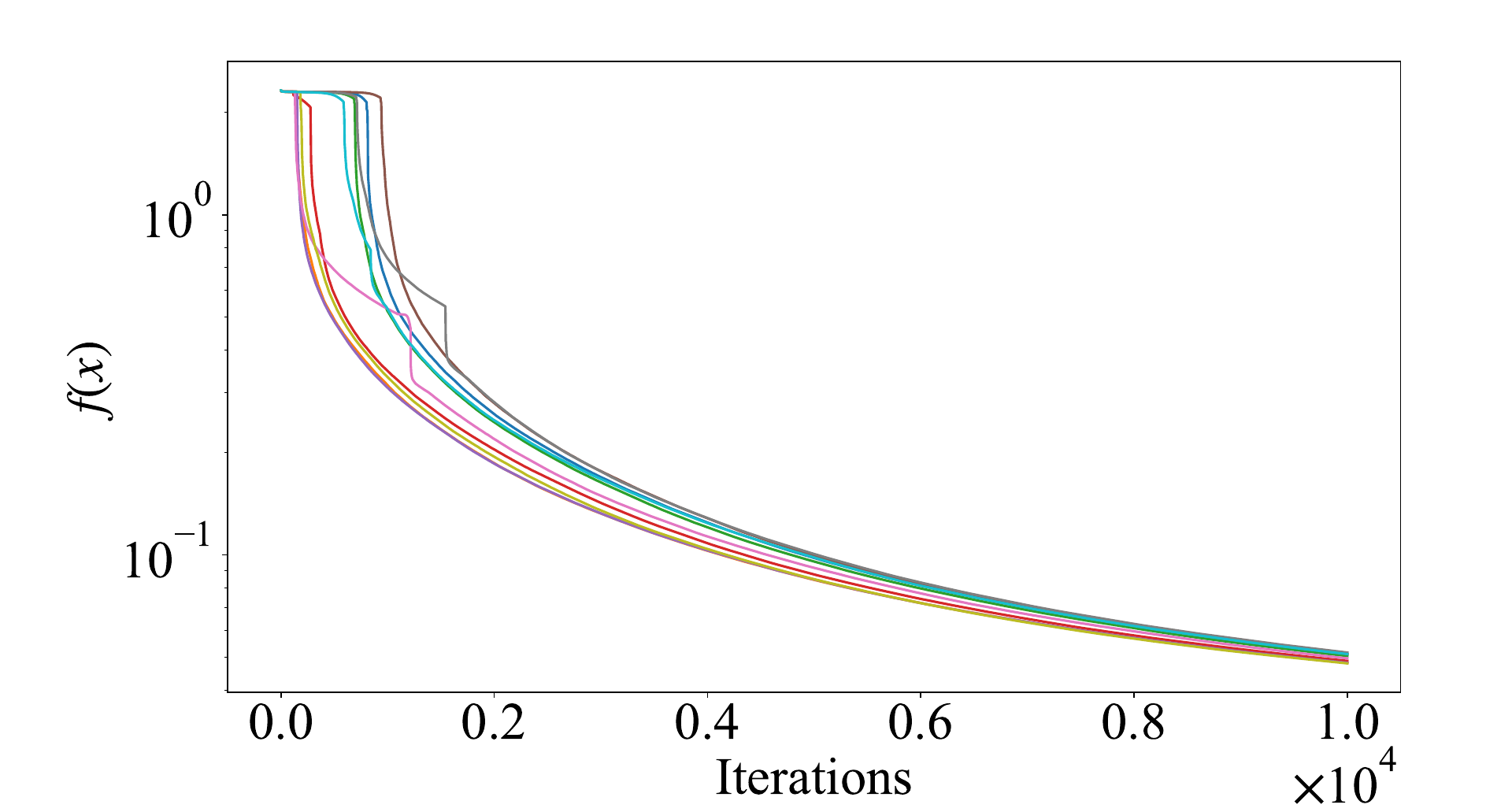}
		\includegraphics[width=0.50\linewidth]{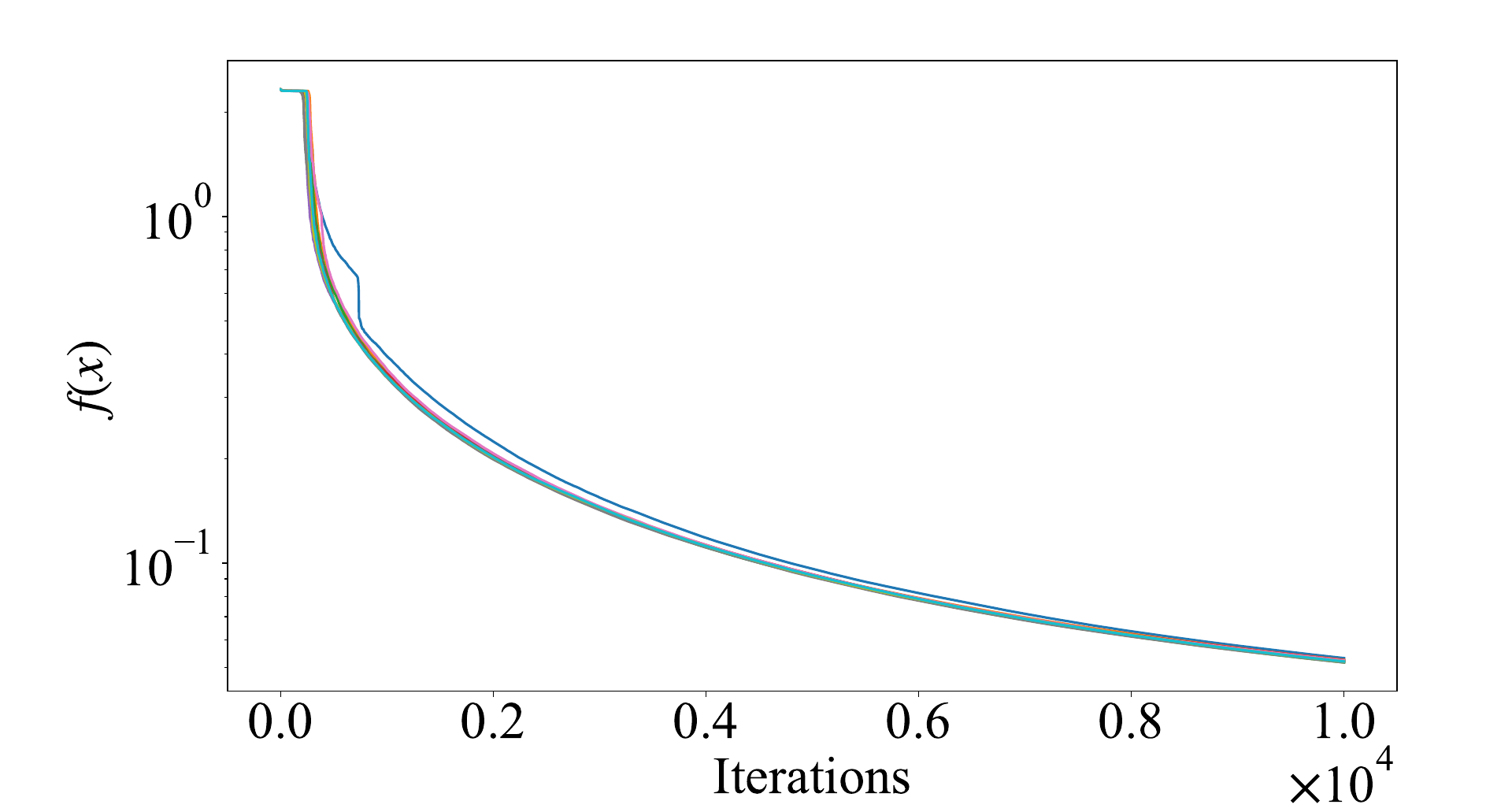}
	}\par\medskip
	\caption{The performance of 10 runs for each $m$ and $d$ for the LNN}\label{fig:parameter_m_d_allrun}
\end{figure*}

\begin{figure*}[!htbp]
	\centering
	\subfloat[The average  $\pm$ the standard deviation (left) and the average (right) of objective function values for 10 runs  \label{fig:function-values-errorbar_mlpnet}]{
		\includegraphics[width=0.50\linewidth]{figures/new_experiment/compare_existing_small_mlpnet.pdf}
		\includegraphics[width=0.50\linewidth]{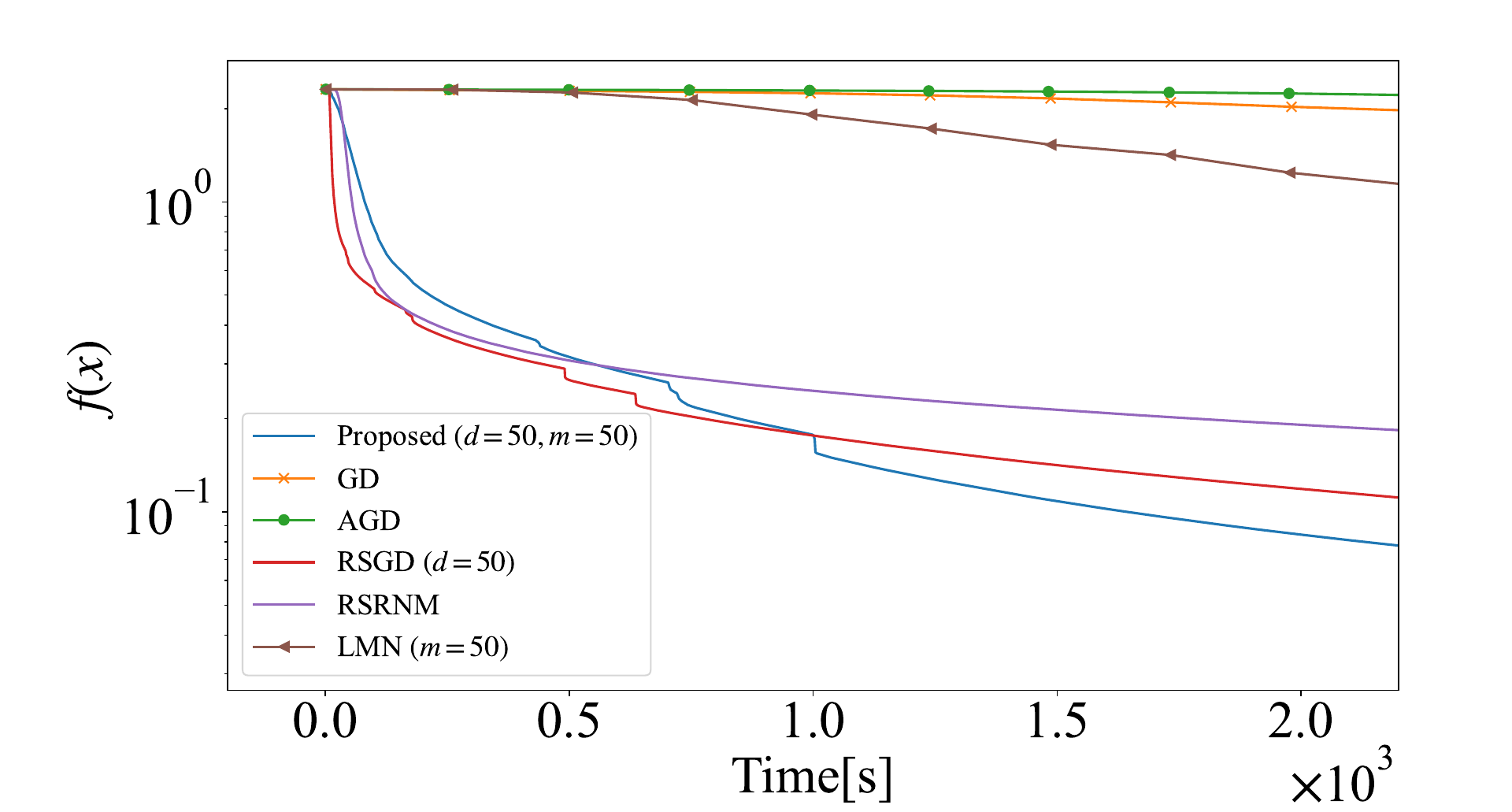}
	}\par\medskip
	\subfloat[The average  $\pm$ the standard deviation (left) and the average (right) of gradient norms for 10 runs       \label{fig:gradnorm-errorbar_mlpnet}]{
		\includegraphics[width=0.50\linewidth]{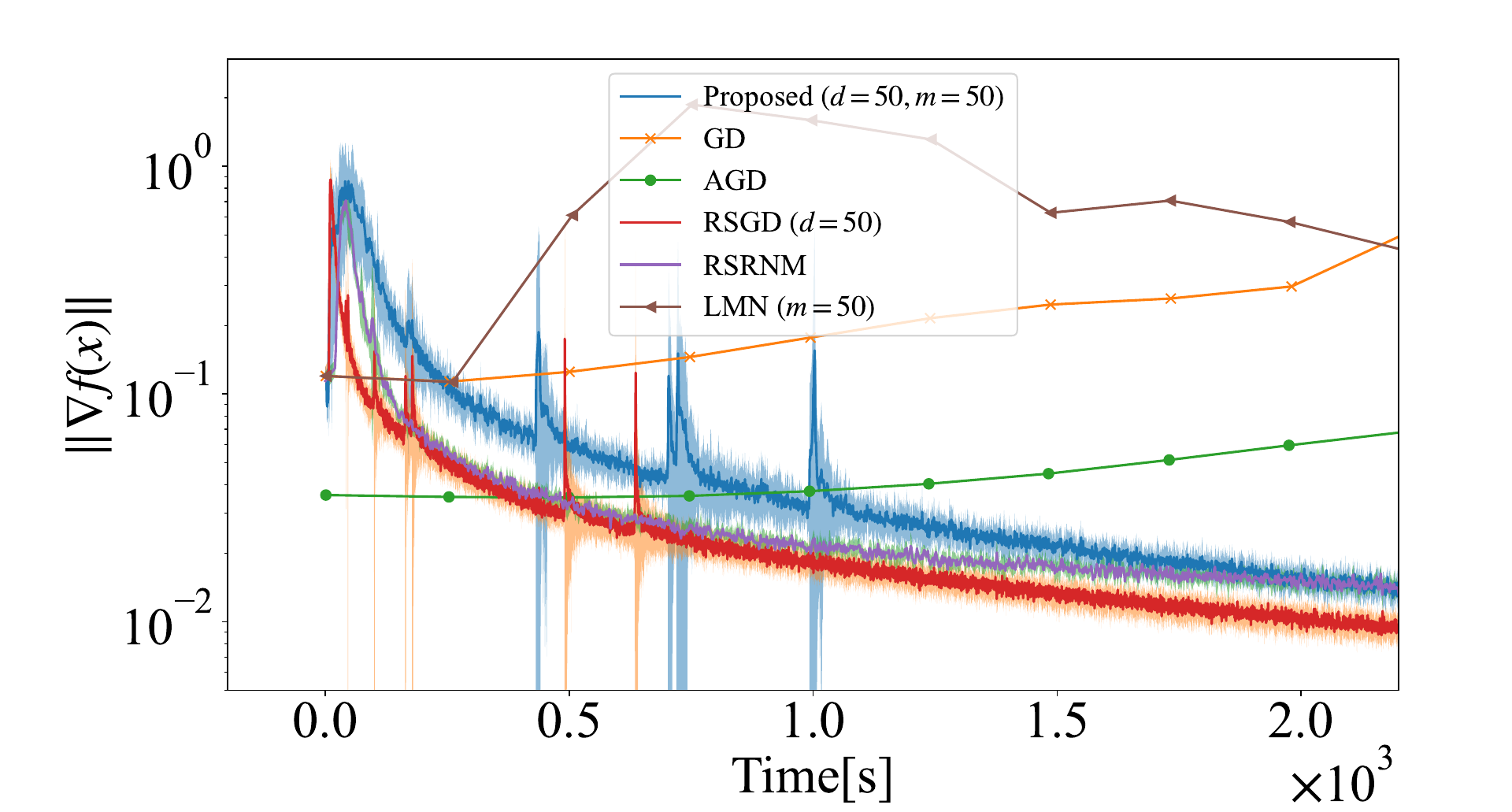}
		\includegraphics[width=0.50\linewidth]{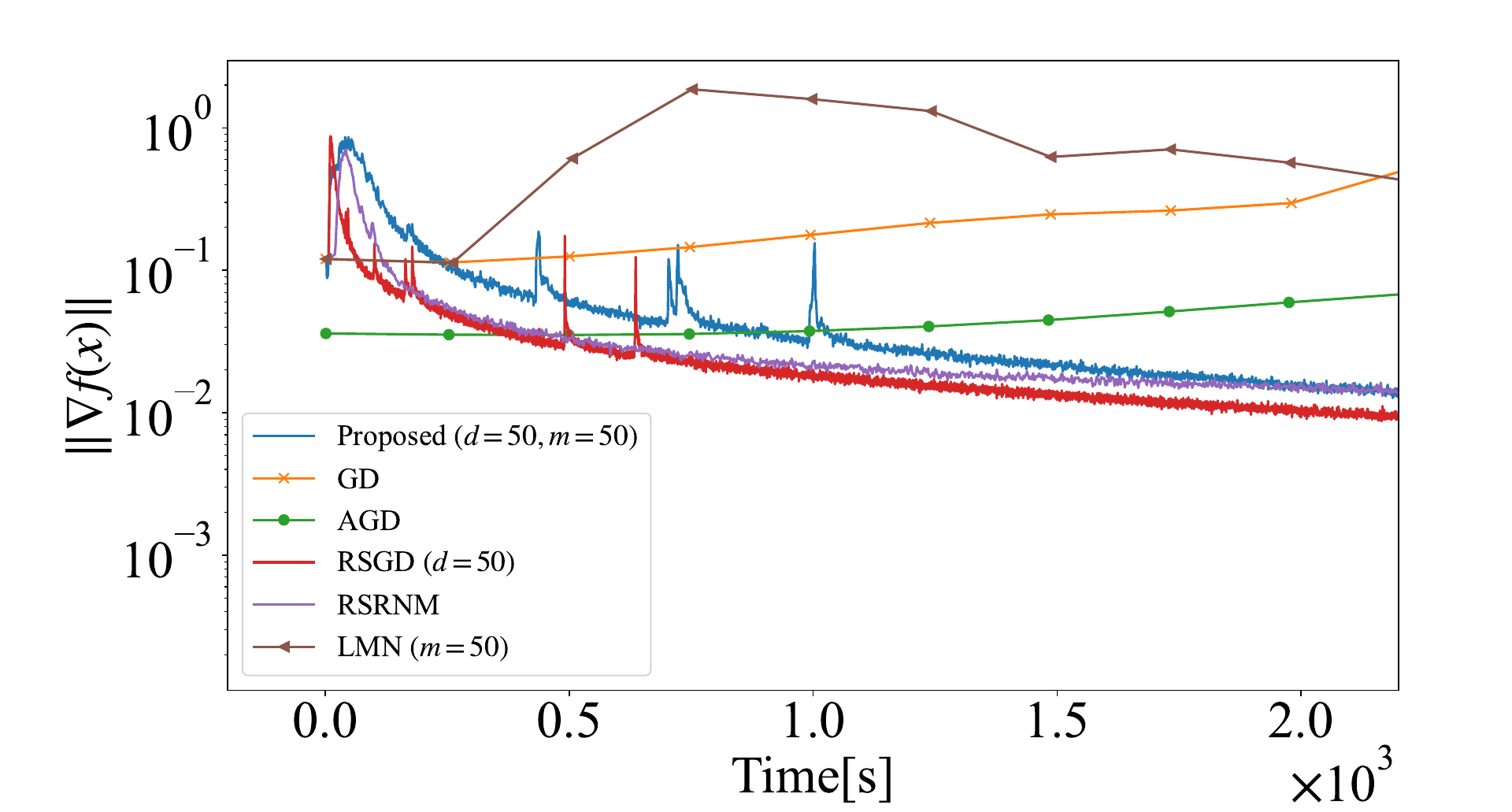}
	}\par\medskip
	\caption{Comparison of our method to existing methods for the LNN. The average $\pm$ the standard deviation for 10 runs (left) and only the average (right) are shown for the objective function value (top) and the gradient norm (bottom).}\label{fig:errorbar_mlpnet}
\end{figure*}

\begin{figure*}[!htbp]
  \centering
	\subfloat[The average  $\pm$ the standard deviation (left) and the average (right) of objective function values for 5 runs \label{fig:function-values-benchmark1}]{
		\includegraphics[width=0.50\linewidth]{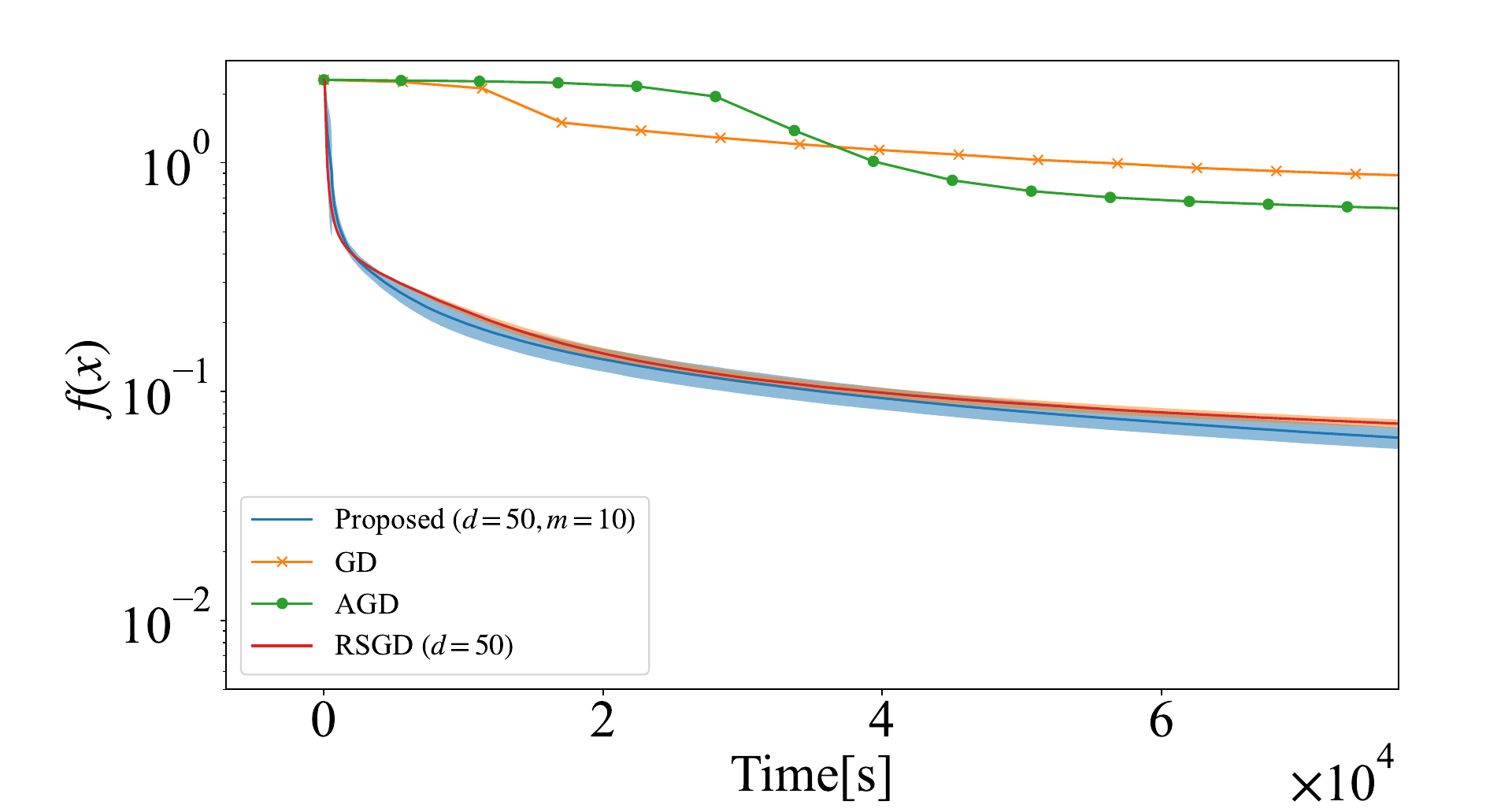}
		\includegraphics[width=0.50\linewidth]{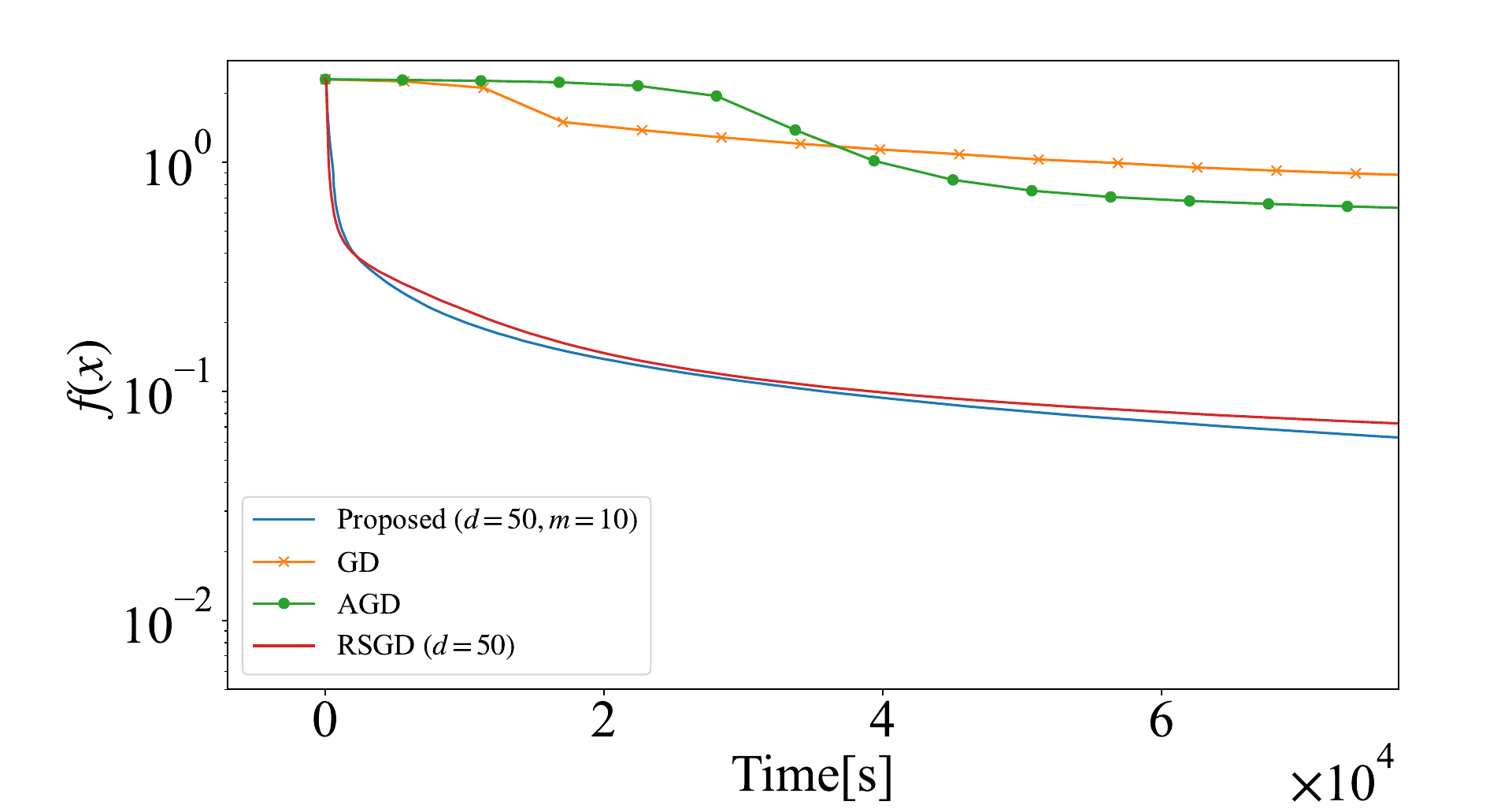}
	}\par\medskip
	\subfloat[The average  $\pm$ the standard deviation (left) and the average (right) of gradient norms for 5 runs       \label{fig:gradnorm-benchmark1}]{
		\includegraphics[width=0.50\linewidth]{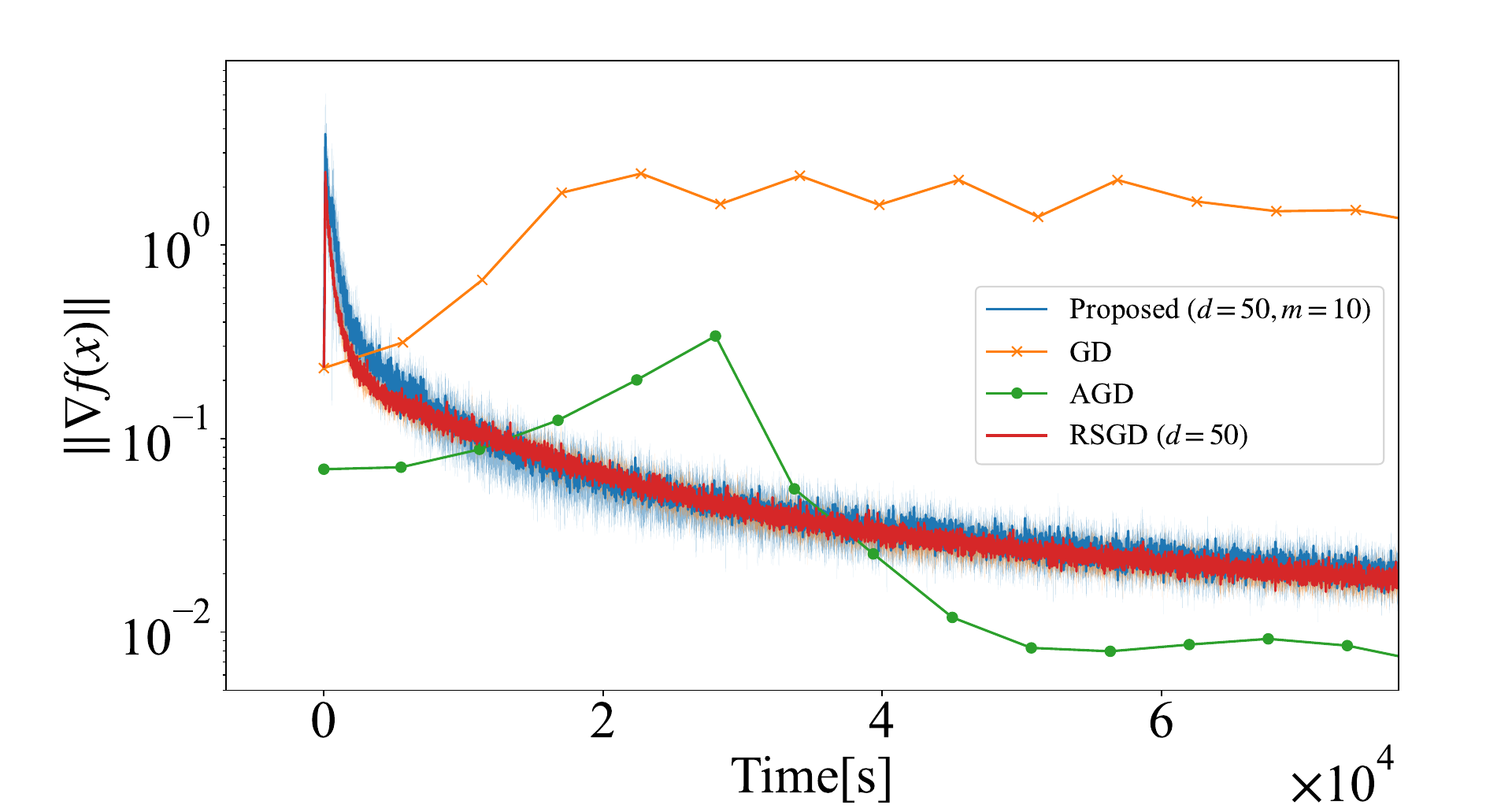}
		\includegraphics[width=0.50\linewidth]{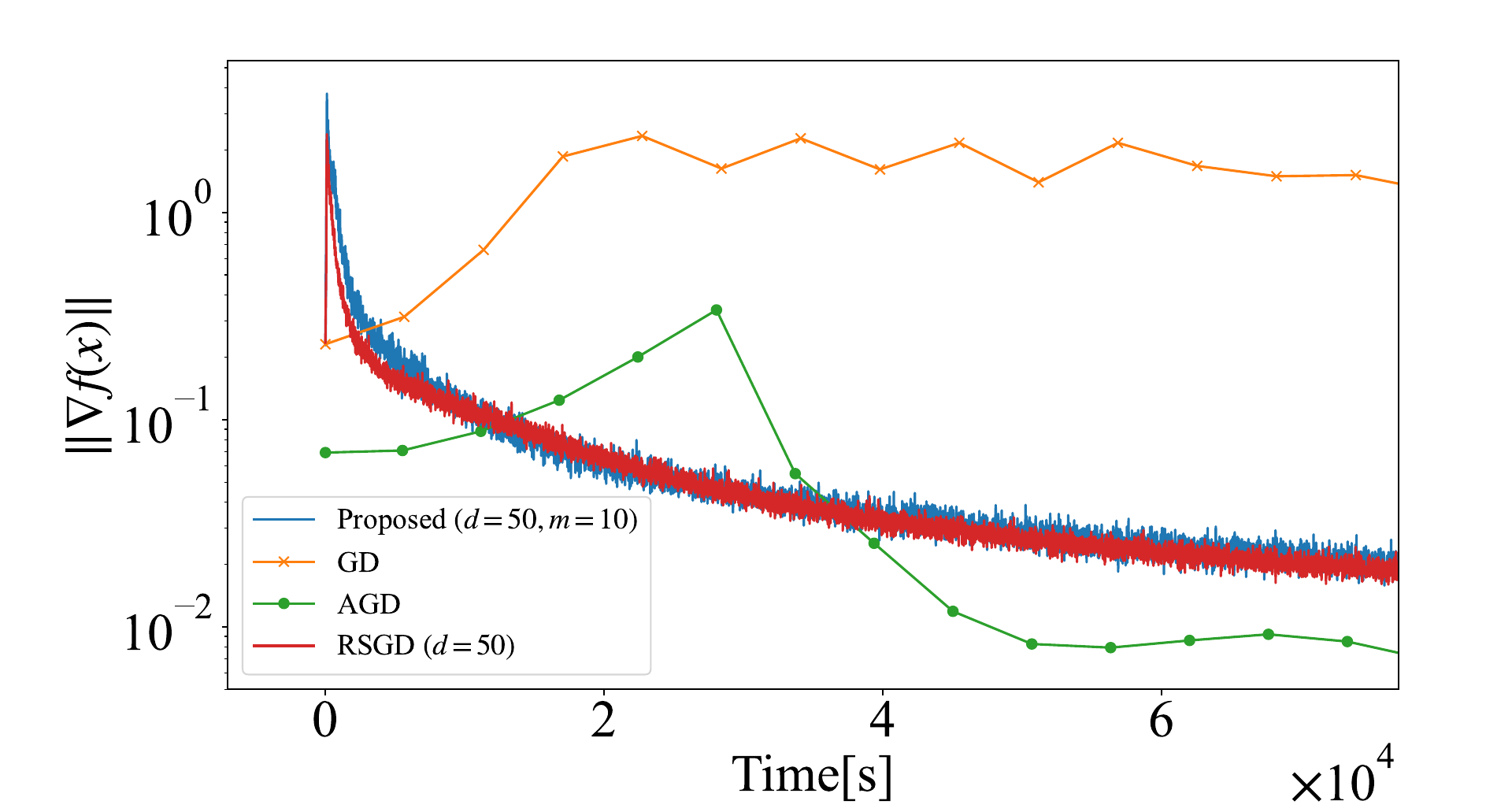}
	}\par\medskip
	\caption{Comparison of our method to existing methods for the CNN. The average $\pm$ the standard deviation for 5 runs (left) and only the average (right) are shown for the objective function value (top) and the gradient norm (bottom).}\label{fig:errorbar_cnn}
\end{figure*}

\begin{figure}[tb]
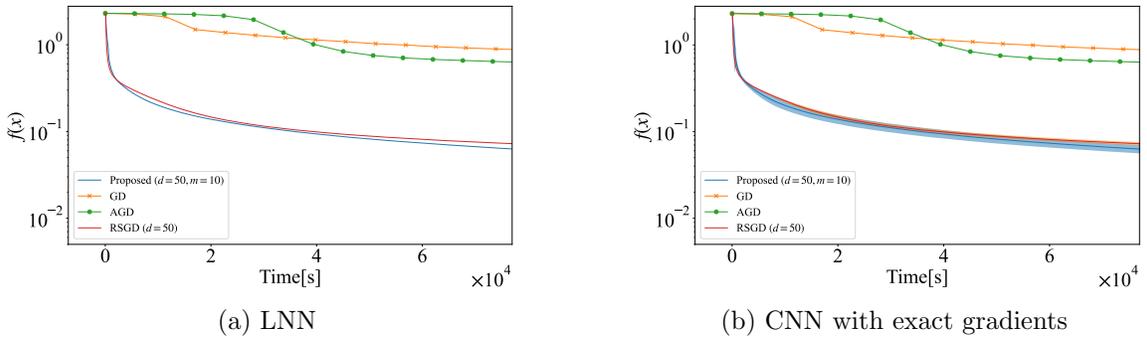

	\begin{minipage}[t]{0.5\hsize}
		\centering
		\includegraphics[width=3.0 in]{figures/new_experiment/compare_cnn_existing_average.pdf}
		\subcaption{LNN}
		\label{fig:result_cnn_ad_average}
	\end{minipage} 
	\begin{minipage}[t]{0.5\hsize}
		\centering
		\includegraphics[width=3.0 in]{figures/new_experiment/compare_cnn_existing.pdf}
		\subcaption{CNN with exact gradients}
		\label{fig:result_cnn_ad}
	\end{minipage} 
	\caption{The average $\pm$ the standard deviation for CNN (5 runs) with exact gradients.}
	\label{fig:realworld_data_ad}
\end{figure}


\clearpage

\end{document}